\theoremstyle{plain}
\newtheorem{theorem}{Theorem}
\newtheorem{prop}[theorem]{Proposition}
\newtheorem{corollary}[theorem]{Corollary}
\newtheorem{lemma}[theorem]{Lemma}
\newtheorem{ques}[theorem]{Question}
\theoremstyle{definition}
\newtheorem{defn}[theorem]{Definition}
\def \r {\mathbb{R}}
\def \z {\mathbb{Z}}
\def \c {\mathbb{C}}
\def \p {\mathbb{P}}
\def \q {\mathcal{Q}}
\def \o {\mathcal{O}}
\def \u {\mathcal{U}}
\def \v {\mathcal{V}}
\def \f {\mathcal{F}}
\def \h {\mathbb{H}}
\def \m {\mathcal{M}}
\def \vs {\vskip 2mm}
\def \d {\partial}
\def \dbar {\bar{\partial}}
\def \ddb {\frac{i}{2\pi}\d\dbar}
\def \dd[#1,#2]{\frac{\partial #1}{\partial #2}}
\def \ddc {dd^c}
\def \D {\triangle}
\def \HH {\mathcal{H}}
\def \and {\; \text{and}\;}
\def \comp {\HH^{k,\alpha}_{z_0, \lambda}(\D, \u_{\lambda})}
\def \compvar[#1,#2]{\HH^{k,\alpha}_{#1, #2}(\D, \u_{#2})}
\def \on {\;\text{on}\;}
\def \for {\; \text{for}\;}
\def \re {\, \text{Re}\,}
\def \im {\,\text{Im}\,}
\def \bX {\bar{X}}
\def \bj {\bar{\jmath}}
\def \bi {\bar{\imath}}
\theoremstyle{remark}
\newtheorem{rmk}[theorem]{Remark}
\numberwithin{theorem}{section}
\def\v@rt#1#2{\m@th\ooalign{$\hfil#1|\hfil$\crcr$#1#2$}}
\def\captr{\mathrel{\mathpalette\v@rt\cap}}
\begin{document}
\title[Complex Monge-Amp\`ere]{The Complex Monge-Amp\`ere Equation, \\Zoll Metrics and Algebraization}

\author{Daniel Burns Jr.}\address{Department of Mathematics, University of Michigan, Ann Arbor} \email{dburns@umich.edu}
\author{Kin Kwan Leung}\address{Department of Mathematics, University of Toronto} \email{kkleung@math.toronto.edu}
\thanks{Supported in part by NSF grant DMS-1105586.}

\begin{abstract} Let $M$ be a real analytic Riemannian manifold. An adapted complex structure on $TM$ is a complex structure on a neighborhood of the zero section such that the leaves of the Riemann foliation are complex submanifolds. This structure is called entire if it may be extended to the whole of $TM$. We prove here that the only real analytic Zoll metric on the $n$-sphere with an entire adapted complex structure on $TM$ is the round sphere. Using similar ideas, we answer a special case of an algebraization question raised by the first author, characterizing some Stein manifolds as affine algebraic in terms of plurisubharmonic exhaustion functions satisfying the homogeneous complex Monge-Amp\`ere (HCMA) equation. 
\end{abstract}

\maketitle
%
%
%
%
%
%
%
%
%
%
%
%
\section{Introduction}\label{sec:intro}
In this paper we study two appearances of solutions of the homogeneous complex Monge-Amp\`ere equation
\begin{equation}
	\label{eqn:hcma}
		(i \,\partial\bar{\partial} u)^n = 0 
\end{equation}
in different but related geometric contexts. 
\vs
The first context has to do with the work of LeBrun and Mason \cite{lm} introducing complex analytic techniques into the study of Zoll surfaces. Recall that a Zoll manifold is a Riemannian manifold for which all geodesics are simple and closed, of the same length. LeBrun and Mason \cite{lm} associate to each Zoll metric $g$ on the two sphere $S^2$ a totally real submanifold $N \subset \c\p^2$ diffeomorphic to $\r\p^2$,  and a non-singular quadric curve $\q^1 \subset \c\p^2- N$ ``at infinity", via a kind of twistor transform. $N$ may be interpreted as the moduli space of all unoriented geodesics on $(S^2, g)$, while $\q^1$ may interpreted as the underlying manifold $(S^2, g)$. The complement $\c\p^2 - N$ is foliated by embedded holomorphic disks with good boundary behavior along $N$. In the case of the {\em round} metric $g_0$ on $S^2$, there is an initially similar seeming embedding of $S^2$, this time in the non-singular two-dimensional quadric hypersurface $\q^2 \subset \c\p^3$. This embedding is totally real, $\q^2$ is a projective complexification of $S^2$, and a hyperplane section at infinity gives a non-singular quadric curve $\q^1 \subset \q^2$ which can be interpreted as the moduli space of oriented geodesics on $(S^2, g_0)$. The complement $\q^2 - S^2$ is foliated by holomorphic disks, whose boundaries on $S^2$ trace out the geodesics on $(S^2, g_0)$. This embedding and foliation correspond to an algebraic compactification of the entire Grauert tube ({\em cf.} section \ref{sec:acs} below) associated to the round metric $g_0.$ LeBrun and Mason mention that they first looked at $\q^2$, a double cover of $\c\p^2$ ramified along $\q^1$. Here $\q^2$ is the nonsingular quadric in $\c\p^3$ and $\q^1$ is the intersection of $\q^2$ with a generic hyperplane in $\c\p^3$. The pullback of the totally real $\r\p^2$ is now a 2-sphere. The foliation in the round case is given by the intersection of complexified real planes passing through the origin. Notice that in $\c\p^2$ and $\q^2$, the leaves are holomorphic disks that can be \emph{extended across} the totally real surface to match the opposite leaf. In subsequent work, LeBrun and Mason have shown that this complex presentation of a Zoll metric on $S^2$ can be put to effective use in studying global properties of the family of all Zoll metrics on $S^2$.
\vs
Motivated by a desire to introduce such complex analytic techniques to the study of higher dimensional Zoll manifolds, we tried to exploit the ``dual" description of the round case above via entire Grauert tubes, since the main known methods to study global properties of the family of Zoll surfaces (Radon transforms \cite{guillemin}; twistor transforms \cite{lm}) seem to face serious obstacles to being extended to these cases. Unfortunately, we prove, in Theorem \ref{thm:mainzollthm} below, that in every dimension $n$, including dimension $n$ = 2, the round sphere is the only Zoll manifold diffeomorphic to $S^n$ whose adapted complex structure is entire. This is proved by showing that the tangent bundle of such an $(M, g)$ can be compactified and next identifying the compactification with the complex quadric $\q^n$, via a pretty classification result in algebraic geometry due to Kobayashi-Ochiai \cite{ko} and Kachi-Koll\'ar \cite{kk}. From there it is easy to draw the final conclusion from global geometric properties of $\q^n$. We mention here that Patrizio-Wong have shown that all CROSSes\footnote{``CROSS" equals ``compact rank one symmetric space".} admit entire Grauert tubes with natural projective compactifications. We speculate briefly on this in section \ref{sec:final} below. The other CROSSes admit similar projective, equivariant compactifications which are detailed by Patrizio and Wong \cite{pw}.
\vs
The compactification technique alluded to above proves useful in our second context, namely that of giving potential theoretic characterizations of affine algebraic manifolds among all Stein manifolds. This question goes back, {\em inter al.}, to \cite{gk},  \cite{stoll80}, \cite{burns} and \cite{burns2}. Very interesting work on such a characterization, with a supplementary condition on the curvature of a K\"ahler metric, are due to Demailly \cite{jpd}. Consider the following remark related to this question: given an open Riemann surface $X$ and $\tau: X \to \r^+$ a strictly subharmonic exhaustion function on $X$ for which there exists $\tau_0\in \r^+$, such that $u:=\log \tau$ is harmonic and (necessarily) $du\neq 0$ outside the compact set $K = \{ \tau \leq \tau_0\} \subset X$. This means that there exists $c\in\r$ such that when $u_0\geq c$, $u^{-1}(u_0)$ is a 1-dimensional manifold. Since $u$ is an exhaustion, i.e., is proper, $u^{-1}(u_0)$ must be compact, and hence it must consist of a finite union of circles. The number of circles is independent of $u_0 >> 0$ by Morse theory because $du\neq 0$ for $u_0\geq c$. Varying $u_0\geq c$, we see that $X\cap \{ u\geq c\}$ is a finite disjoint union of cylinders $C_j$. Let $u_j = u|_{C_j}$ and let $u_j^*$ be a harmonic conjugate of $u_j$. The harmonic conjugate is defined up to an additive constant $\alpha_j$, where
\[
\alpha_j = \int_{\d C_j} d^c u.
\]  
To eliminate this constant, we let 
\[
\zeta_j := \exp\left(-(u_j + i u_j^*)\frac{2\pi}{\alpha_j}\right). 
\]
Then $\zeta_j$ is a local holomorphic coordinate on $C_j$ and thus identifies $C_j$ with a punctured disk. This shows that we can compactify $X$ to a compact Riemann surface $\bar{X}$ by filling in the origin of each of the punctured disks $C_j$. Since any compact Riemann surface is projective, $X$ is an affine algebraic variety. We believe this observation was first made by W. Stoll in the eighties, though we were unable to locate it in the literature. The first author asked what will happen when $X$ is an $n$-dimensional manifold, $\tau$ is a strictly plurisubharmonic exhaustion function and $u = \log \tau$ satisfies the homogeneous complex Monge-Amp\`ere equation when $u$ is large: does this guarantee that $X$ is an affine algebraic variety with polynomials the polynomially growing entire functions on $X$, as measured in terms of powers of $\tau$?
\vs
A solution $u$ as above of the HCMA equation defines a foliation of $X$ near infinity by Riemann surfaces. To arrive at a situation resembling Stoll's example in one dimension ``with parameters", we will make the restrictive assumption that this foliation has closed leaves and is locally trivial near infinity (this is equivalent to the Zoll condition in the case of a Grauert tube and its corresponding Monge-Amp\`ere solution). In this case, Theorem \ref{thm:algn} below, we can once again compactify $X$ to $\bar{X}$ by adding a smooth real codimension two manifold $D$ at infinity and, using Kodaira's embedding theorem, prove that $\bar{X}$ is a projective manifold, and $D$ an ample divisor on $\bar{X}$. This implies that $X$ is affine, and the construction shows that polynomial growth with respect to $\tau$ does, indeed, determine the algebraic entire functions on $X$.
\vs
The conditions in Theorem \ref{thm:algn} are restrictive, but in section \ref{sec:algnec} below, however, we will show the converse of this statement, namely, that if $\bar{X}$ is a projective manifold, and $D \subset \bar{X}$ is a smooth, ample divisor, then $X := \bar{X} - D$ admits a strictly plurisubharmonic exhaustion function for which the conditions of Theorem \ref{thm:algn} hold true. Notice that this condition is ``generic" from the point of view of $\bar{X}$ embedded in projective space, namely, the generic hyperplane section would give a triple $(X \subset \bX \supset D := \bar{X} - X)$ satisfying the hypotheses of Theorem \ref{thm:converse}. Thus, Theorems \ref{thm:algn} and \ref{thm:converse} taken together give a characterization of affine $X$ which admit a smooth projective completion and smooth complementary divisor at infinity in terms of p.s.h. solutions of the HCMA equation.

\vs
We now summarize quickly the layout of the paper. In section \ref{sec:acs}, we will review some basic ideas (and notation) introduced in Lempert and Sz\H{o}ke \cite{ls}. In section \ref{sec:entirezoll}, we will use Stoll's idea to show that a real analytic Zoll sphere with entire Grauert tube must be round. In section \ref{sec:algsuff}, we prove the algebraization result, given the existence of a special solution of the HCMA, and in section \ref{sec:algnec} we prove the converse by constructing such a Monge-Amp\`ere solution. In a brief section \ref{sec:final} we list a few open questions.
\vs
Parts of this paper appeared in earlier forms in \cite{burns2}, and in the second author's PhD dissertation \cite{kkl} at the University of Michigan. 
%
%
%
%
%
%
 %
 %
 %
 %
 %
 %
\section{Adapted complex structure}\label{sec:acs}
In this section, we recall some results and notation from Lempert and Sz\H{o}ke's \cite{ls}.
\vs
Let $(M,g)$ be a compact real-analytic Riemannian manifold with $g$ real analytic, and denote by $TM$ its tangent bundle. Let $\tau \in \r$ and $N_\tau : TM\rightarrow TM$ be the smooth mapping defined by multiplication by $\tau$ in the fibres, i.e. if $x\in M$ and $v\in T_x M$, then $N_\tau (x,v) = (x,\tau v)$. Let $\gamma:\r\rightarrow M$ be an arc-length parametrized geodesic. We define an immersion $\psi_\gamma : \c \rightarrow TM$ 
\begin{equation}
\psi_\gamma (\sigma + i\tau) = N_\tau \dot{\gamma} (\sigma).
\end{equation}
The image of $\psi_\gamma$ is exactly the tangent bundle of $\gamma$. The immersion $\psi_\gamma$ also induces a complex structure on $T\gamma$ by the standard complex structure in $\c$. We call $T\gamma$ a complexified geodesic under this complex structure and denote it by $\mathfrak{C}_\gamma$, or just $\mathfrak{C}$ if the context is clear.

As $\gamma$ varies, the images of $\c - \r$ under the mapping $\psi_\gamma$ define a smooth foliation of $TM - 0_M$ by real surfaces. Each leaf extends across $M$ but the leaves will intersect each other on $M$. We called this foliation the \emph{Riemann foliation}. 
Let 
\begin{equation}
T^RM = \{ v\in TM : g(v,v) < R^2\}.
\end{equation} 
We define
\begin{defn}
An \emph{adapted complex structure} $J$ on $T^RM$ is a complex structure on $T^RM$ such that for any geodesic $\gamma$, $\mathfrak{C}_\gamma\cap T^RM$ is a complex submanifold of $T^RM$. 
\end{defn}

Adapted complex structures were introduced in \cite{ls,szoke} and, in an alternative form, at the same time in \cite{gs,gs2}. 

We need the following properties of adapted complex structures.

\begin{defn}
A \emph{parallel vector field} $\xi$ on a complexified geodesic $\mathfrak{C}$ is a vector field on $T^RM$ along $\mathfrak{C}$ such that $\xi$ is invariant under $N_\tau$ and $\phi_s$, where $\phi_s$ is the geodesic flow on $T^RM$. 
\end{defn}
Let $z\in T^RM - M$ and $\tilde{\xi}\in T_z T^RM$. Assume (WLOG) that $\psi_{\gamma}(i) = z$. Then there exists a parallel vector field $\xi$ along the leaf of the Riemann foliation containing $z$ such that $\xi(z) = \tilde{\xi}$. 
\vs
Let $\pi:TM\rightarrow M$ be the projection map and let
\[
 K:T(TM)\rightarrow \pi^*TM
 \] 
be the connection map. Parallel vector fields are completely determined by Jacobi fields along $\gamma$:
\begin{lemma} (\cite{ls})
$\xi|_\r$ is a Jacobi field $Y$ along $\gamma$, with $Y(\sigma) = \pi_*(\xi(\sigma + i))$ and $Y'(\sigma) = K(\xi (\sigma + i))$. Conversely,
if $Y$ is a Jacobi field along $\gamma$ with $Y(0) = u$ and $Y'(0) = v$, there exists a unique parallel vector field $\xi$ along $\mathfrak{C}_\gamma$ such that $\xi|_\r(\sigma) = Y(\sigma)$
\end{lemma}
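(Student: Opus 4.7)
The plan is to translate between parallel vector fields along $\mathfrak{C}_\gamma$ and Jacobi fields along $\gamma$ via the horizontal-vertical splitting $T_z(TM) = H_z \oplus V_z$ induced by the Levi-Civita connection at a point $z \in TM$. Under this splitting $\pi_*$ projects onto the horizontal summand and the connection map $K$ projects onto the vertical summand, each canonically identified with $T_{\pi(z)}M$. The central classical tool I will invoke is the linearization of the geodesic flow in terms of Jacobi fields: for $X \in T_z(TM)$ with $X = H_z(a) + V_z(b)$,
\[
(\phi_s)_* X \;=\; H_{\phi_s(z)}\bigl(Y(s)\bigr) \;+\; V_{\phi_s(z)}\bigl(Y'(s)\bigr),
\]
where $Y$ is the Jacobi field along the geodesic issuing from $z$ with initial data $Y(0) = a$, $Y'(0) = b$.

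\textbf{Forward direction.} Given a parallel vector field $\xi$, applying $\phi_\sigma$-invariance at the point $\psi_\gamma(i) = \dot\gamma(0)$ yields $\xi(\psi_\gamma(\sigma + i)) = (\phi_\sigma)_*\,\xi(\psi_\gamma(i))$. Writing $\xi(\psi_\gamma(i)) = H(a) + V(b)$ and invoking the linearization formula gives immediately $\pi_*\xi(\sigma + i) = Y(\sigma)$ and $K\xi(\sigma + i) = Y'(\sigma)$, where $Y$ is the Jacobi field along $\gamma$ with initial data $(a,b)$. To recover the stronger statement that $\xi|_{\r}$ itself equals $Y$, I would use $N_\tau$-invariance: since $\pi \circ N_\tau = \pi$ and $N_\tau$ scales each fiber by $\tau$, the map $(N_\tau)_*$ preserves horizontal components and scales vertical ones by $\tau$; letting $\tau \to 0$ then shows $\xi|_{\r}(\sigma)$ has horizontal part $Y(\sigma)$ and vanishing vertical part, which is $Y(\sigma)$ under the canonical inclusion $T_{\gamma(\sigma)}M \hookrightarrow T_{0_{\gamma(\sigma)}}TM$.

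\textbf{Converse, uniqueness, and the main obstacle.} For the converse, set $\xi(\psi_\gamma(i)) := H_{\dot\gamma(0)}(u) + V_{\dot\gamma(0)}(v)$ and propagate by the formula
\[
\xi(\psi_\gamma(\sigma + i\tau)) \;:=\; (N_\tau)_*\,(\phi_\sigma)_*\,\xi(\psi_\gamma(i)).
\]
Invariance under $N_{\tau_1}$ is immediate from $N_{\tau_1} \circ N_\tau = N_{\tau_1\tau}$, while invariance under $\phi_s$ reduces to the commutation identity $\phi_s \circ N_\tau = N_\tau \circ \phi_{\tau s}$, itself a direct consequence of the geodesic scaling law $\gamma_{\lambda v}(t) = \gamma_v(\lambda t)$. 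Uniqueness follows because these two invariances force $\xi$ to be determined on all of $\mathfrak{C}_\gamma$ by its single value at $\psi_\gamma(i)$, which in turn is pinned down by $(u,v)$ through the horizontal-vertical decomposition. I expect the only genuine subtlety to be verifying smoothness of the propagation formula across the zero section $\{\tau = 0\}$, where $\psi_\gamma$ becomes non-injective as a set map although its differential remains of rank two; this is resolved by noting that $(N_\tau)_*$ is linear in $\tau$ on the vertical summand, so the formula extends smoothly to $\tau = 0$ with vanishing vertical part, matching the identification established in the forward direction.
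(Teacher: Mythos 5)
Your proof is correct and follows the standard route: the horizontal-vertical splitting of $T(TM)$ combined with the classical fact that $(\phi_s)_*$ is expressed through Jacobi fields and their covariant derivatives, together with the scaling identities $N_{\tau_1}\circ N_\tau = N_{\tau_1\tau}$ and $\phi_s\circ N_\tau = N_\tau\circ\phi_{\tau s}$. The paper simply cites \cite{ls} for this lemma without proof, and your argument—including the careful limit as $\tau\to 0$, where $(N_\tau)_*$ kills the vertical component so that $\xi|_{\r}$ is purely horizontal and hence canonically a vector field along $\gamma$—supplies it in essentially the form used in \cite{ls}.
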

Notice that if $\xi$ is tangent to the leaf at a point, then $\xi$ is tangent to the leaf at all points. 

Let $\gamma$ be an arc-length parametrized geodesic in $M$ and let $z=\dot\gamma(0)\in T_{\gamma(0)}M$. Choose an orthonormal basis $\{w_j\in T_{\gamma(0)}M\}_{1\leq j\leq n}$, such that $w_n = z$. Define parallel vector fields $\xi$ and $\eta$ such that 
\begin{equation}\label{pvf1}
\pi_* \xi_j (i) = w_j\quad K\xi_j (i) = 0;
\end{equation}
and
\begin{equation}\label{pvf2}
\pi_* \eta_j (i) = 0\quad K\eta_j (i) = w_j.
\end{equation}
Let $Y_j=\xi_j|_\r$ and $Z_j=\eta_j|_\r$ be Jacobi fields along $\gamma$. Notice that the $Y_j$ are pointwise linearly independent (except perhaps on a discrete subset $S$ of $\r$) Jacobi fields along $\gamma$. The $Z_j$'s are also smooth vector fields and hence there exist smooth functions $\tilde{a}_{jk}$ such that 
\begin{equation}
	Z_k = \sum \tilde{a}_{jk} Y_j
\end{equation}
on $\r - S$. The presence of an adapted complex structure ensures that \cite{ls} $\tilde{a}_{jk}$ has a (unique) meromorphic extension $a_{jk}$ over the strip
\[
\Sigma = \{ \sigma+i\tau\in\c, |\tau|<R/\sqrt{2E(z)} \}
\]
such that the poles of $a_{jk}$ lie on $\r$ and the matrix $\im(a_{jk})$ is symmetric and positive definite (hence invertible) in $\Sigma - \r$. Let $(e_{jk}) = (\im a_{jk})^{-1}$ and from \cite{ls}, for any point $p=\psi_\gamma (\sigma + i\tau)$, $0<\tau<R$, we have
\begin{equation}\label{j}
	\begin{array}{ll}
		&J_p \xi_h (\sigma + i\tau) \\  
			&	\\
		=&\sum e_{kh}(\sigma + i\tau) \left[\eta_k(\sigma + i\tau) - \left(\sum (\re\, a_{jk}(\sigma + i\tau))\xi_j(\sigma +i\tau)\right)\right]. \\
			&	
	\end{array}
\end{equation}
This shows that
\begin{prop}\label{uniqueacs}
Given a compact Riemannian manifold $(M,g)$ and $R$ such that $0<R\leq \infty$, there is at most one adapted complex structure on $T^RM$.
\end{prop}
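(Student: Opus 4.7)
The plan is to exploit the explicit formula (\ref{j}), noting that its entire right-hand side is determined by the Riemannian data on $(M,g)$. Fix a unit-speed geodesic $\gamma$, an orthonormal basis $\{w_j\}$ at $\gamma(0)$, and the parallel vector fields $\xi_j,\eta_k$ from (\ref{pvf1})--(\ref{pvf2}). The Jacobi fields $Y_j,Z_k$ along $\gamma$, the smooth coefficients $\tilde{a}_{jk}$ they produce, and the unique meromorphic extension $a_{jk}$ to $\Sigma$ depend only on $g$; consequently so do $\re a_{jk}$ and $(e_{kh})=(\im a_{jk})^{-1}$. If $J_1$ and $J_2$ are two adapted complex structures on $T^R M$, substituting each into (\ref{j}) shows that
\[
J_1\xi_h(p) \;=\; J_2\xi_h(p)
\]
at every $p = \psi_\gamma(\sigma + i\tau)$ with $0<\tau<R$ and every $h$.

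The next step is to upgrade this identity on the $\xi_h$ to full equality of $J_1$ and $J_2$ on $T_p T^R M$. The space of parallel vector fields along $\mathfrak{C}_\gamma$ has real dimension $2n$, and evaluation at any point $p \in \mathfrak{C}_\gamma - M$ is an isomorphism onto $T_p T^R M$ (using invariance of parallel fields under the $N_\tau$ and geodesic flows, together with the fact that at $\psi_\gamma(i)$ the $\xi_j$ and $\eta_k$ are the horizontal and vertical lifts of the $w_j$). In particular $\{\xi_j(p),\eta_k(p)\}$ is a basis of $T_p T^R M$. Applying $J_1$ to $J_1\xi_h = J_2\xi_h$ and using $J_1^2 = J_2^2 = -\mathrm{id}$ gives
\[
J_2(J_1\xi_h) \;=\; J_2(J_2\xi_h) \;=\; -\xi_h \;=\; J_1(J_1\xi_h),
\]
so $J_1$ and $J_2$ also agree on $\mathrm{span}\{J_1\xi_h\}$. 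Solving (\ref{j}) for $\eta_k$, which is possible because $(e_{kh})$ is invertible, writes each $\eta_k(p)$ as a real linear combination of $\{\xi_j(p)\}$ and $\{J_1\xi_h(p)\}$. Hence $\mathrm{span}\{\xi_h(p),J_1\xi_h(p)\}$ already contains every $\eta_k(p)$ and therefore equals the whole tangent space, yielding $J_1 = J_2$ at every $p \in T^R M - M$.

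The conclusion follows by density: $T^R M - M$ is open and dense in $T^R M$, and both $J_i$ are smooth tensors, so the identity extends across the zero section. The main subtle point in this plan is the middle step, where one must combine the invertibility of $(\im a_{jk})$ with the basis property of $\{\xi_j(p),\eta_k(p)\}$ to pass from agreement merely on the $\xi_h$ to agreement on all of $T_p T^R M$; everything else is either a direct reading of formula (\ref{j}) or a standard continuity argument.
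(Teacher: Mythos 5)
Your proposal is correct and matches the paper's (largely implicit) argument: formula~(\ref{j}) expresses $J$ entirely in terms of the metric-determined data $e_{kh}$, $\re a_{jk}$, $\xi_j$, $\eta_k$, so two adapted structures must agree. You have simply filled in the bookkeeping steps (upgrading from agreement on the $\xi_h$ to the whole tangent space via invertibility of $(e_{kh})$ and $J^2=-\mathrm{id}$, then closing up by continuity across the zero section) that the paper leaves to the reader.
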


Recall the canonical one form $\Theta$ on $TM$:
\begin{equation}
	\Theta (v) = g(z,\pi_* v)\quad v\in T_z (TM).
\end{equation}
Then $\Omega := d\Theta$ is the canonical symplectic form on $TM$. For $z\in TM$, we define
\begin{equation}
V_z = \ker (\Theta)_z \cap \ker (dE)_z \subset T_z (TM).
\end{equation}
By Theorem 5.3 in \cite{ls}, we know that $V_z$ is a $J$-invariant subspace. Let $z=\dot\gamma\in TM$. We also notice that $\xi$ is a parallel vector field such that $\xi(i)\in V_z$ if and only if $\xi|_\r$ is a normal Jacobi field along $\gamma$. 

We also notice that $\xi^{1,0}=\frac{1}{2}(\xi - iJ\xi)\in T^{1,0}X$ is a holomorphic section of $T^{1,0}(TM)$ along the geodesic. Regarding the adapted complex structures, we have the following relations.
\begin{lemma}\label{ddde} (\cite{ls}, Corollary 5.5)
$\dbar E - \d E = i\Theta$ and $\d\dbar E = \frac{i}{2}\Omega$.
\end{lemma}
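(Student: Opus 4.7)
My plan is to prove the first identity by decomposing the tangent space $T_z(TM)$ at each $z \in TM - 0_M$ into two $J$-invariant summands on which both sides are easy to compute; the second identity then follows formally by applying $d$ and using $\d\dbar = -\dbar\d$ on functions, so that $d(\dbar E - \d E) = 2\d\dbar E = d(i\Theta) = i\Omega$.

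For the first identity, fix $z = \tau_0\dot\gamma(\sigma_0) \in TM - 0_M$ on the leaf $\mathfrak{C}_\gamma = \psi_\gamma(\c)$ of the Riemann foliation. A short check using the horizontal/vertical decomposition of $T(TM)$ shows that $\Theta$ and $dE$ are linearly independent at $z$: on a vertical vector corresponding to $w \in T_{\pi(z)}M$, $\Theta$ vanishes while $dE$ equals $g(z,w)$, and on a horizontal lift the roles are reversed. Hence $V_z = \ker\Theta \cap \ker dE$ has real codimension $2$. Since the leaf is a complex curve by definition of the adapted complex structure, and $V_z$ is $J$-invariant by Theorem~5.3 of \cite{ls}, a dimension count together with the transversality just noted gives the $J$-invariant direct sum
\[
  T_z(TM) \;=\; T_z\mathfrak{C}_\gamma \;\oplus\; V_z.
\]
It therefore suffices to verify the identity on each summand.

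On the leaf, I pull everything back via the holomorphic map $\psi_\gamma:\c \to TM$. With $w = \sigma + i\tau$ the standard coordinate, arc-length parametrization gives $\psi_\gamma^* E = \tfrac{1}{2}\tau^2$, and a direct computation with $\tau = (w-\bar w)/(2i)$ yields $\psi_\gamma^*(\dbar E - \d E) = i\tau\,d\sigma$. On the other hand, $\psi_{\gamma*}(\d/\d\sigma)$ is the horizontal lift of $\dot\gamma(\sigma)$ and $\psi_{\gamma*}(\d/\d\tau)$ is vertical, so from the definition of $\Theta$ one gets $\psi_\gamma^*\Theta = \tau\,d\sigma$, matching. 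On $V_z$, the right side vanishes by definition, and writing $\dbar E - \d E = i\,dE\circ J$ (valid for any real function $E$) together with the $J$-invariance of $V_z \subset \ker dE$ gives $(\dbar E - \d E)(v) = i\,dE(Jv) = 0$, so the left side vanishes as well.

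I do not expect a serious obstacle. The main step requiring care is the direct-sum decomposition, which rests on the linear independence of $\Theta$ and $dE$ off the zero section and on the $J$-invariance of $V_z$ recorded in \cite{ls}; the other ingredient, tracking signs in the leaf-side calculation of $\dbar - \d$ on $\tfrac{1}{2}\tau^2$, is routine.
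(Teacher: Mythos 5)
Your argument is correct. The paper itself gives no proof here --- Lemma~\ref{ddde} is simply quoted from Lempert--Sz\H{o}ke \cite{ls}, Corollary~5.5 --- so there is nothing to compare against in this paper. Your route (the $J$-invariant splitting $T_z(TM) = T_z\mathfrak{C}_\gamma \oplus V_z$, the pullback computation $\psi_\gamma^* E = \tfrac12\tau^2$ on the leaf, and the identity $\dbar E - \d E = i\,dE\circ J$ on $V_z$, then differentiating to get the second equation) is precisely the natural proof and is the argument underlying the cited reference, so the proposal is sound.
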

\begin{lemma}\label{spshpsh} (\cite{ls}, Theorem 5.6)
$E$ is strictly plurisubharmonic and $u := \sqrt{E}$ is plurisubharmonic and satisfies the homogeneous complex Monge-Amp\`ere equation 
\[
	(\partial \bar{\partial}u)^n = 0
\]
on $T^RM - 0_M$.
\end{lemma}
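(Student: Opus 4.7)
The strategy has two pieces. Strict plurisubharmonicity of $E$ will come from Lemma \ref{ddde} plus a direct computation verifying that $\Omega$ yields a positive Hermitian form in the adapted complex structure. Plurisubharmonicity of $u=\sqrt{E}$ and the HCMA equation will then both follow from the key observation that $u$ restricts to a harmonic function along each complexified geodesic.

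For the first piece, Lemma \ref{ddde} gives $\partial\bar\partial E=\tfrac{i}{2}\Omega$, so the Hermitian form $H_E(X,Y)=\partial\bar\partial E(X,\bar Y)$ on $T^{1,0}(TM)$ is a positive multiple of $\Omega(\cdot,J\cdot)$. At a point $z\in T^RM-M$ on a complexified geodesic $\mathfrak{C}_\gamma$, I take the parallel vector fields $\xi_j,\eta_j$ from \eqref{pvf1}--\eqref{pvf2} and the $(1,0)$-basis $\xi_j^{1,0}=\tfrac{1}{2}(\xi_j-iJ\xi_j)$ of $T_z^{1,0}(TM)$. Expanding $\Omega=d\Theta$ in this basis using the formula \eqref{j} for $J$ and $\Theta(v)=g(z,\pi_*v)$, the resulting Hermitian matrix comes out as a positive multiple of $(\im a_{jk})^{-1}$, which is positive definite by the preceding setup.

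For the second piece, the arc-length parametrization gives $E\circ\psi_\gamma(\sigma+i\tau)=\tau^2/2$, hence $u\circ\psi_\gamma=|\tau|/\sqrt{2}$, which is linear on each half-leaf $\{\tau>0\}$ and therefore harmonic in the complex coordinate $\sigma+i\tau$. Differentiating $E=u^2$ yields
\[
\partial\bar\partial u \;=\; \frac{\partial\bar\partial E}{2u}\;-\;\frac{\partial u\wedge\bar\partial u}{u}
\]
on $T^RM-0_M$. Let $\mathcal{E}$ denote the Euler vector field generating fibre scaling; at every non-zero point of $\mathfrak{C}_\gamma$, $\mathcal{E}$ is tangent to the leaf. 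From $\Theta(\mathcal{E})=0$, Lemma \ref{ddde}, and a type decomposition, I get $\mathcal{E}^{1,0}E=E$; substituting into the displayed identity then yields $H_u(\mathcal{E}^{1,0},\mathcal{E}^{1,0})=0$, where $H_u$ is the Hermitian form associated to $\partial\bar\partial u$.

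It remains to show $H_u\ge 0$. Once this is done, Cauchy--Schwarz for positive semidefinite Hermitian forms, together with $H_u(\mathcal{E}^{1,0},\mathcal{E}^{1,0})=0$, forces $\mathcal{E}^{1,0}$ to lie in the entire null space of $H_u$; so $\mathrm{rank}\,H_u\le n-1$ at each point and $(\partial\bar\partial u)^n=0$. I would verify semidefiniteness by computing $H_u$ in the basis $\{\xi_j^{1,0}\}$: the positive definite contribution from $\partial\bar\partial E/(2u)$ is corrected by the rank-one term $\partial u\wedge\bar\partial u/u$, and bookkeeping with the matrix $a_{jk}$ shows that the cancellation exactly kills the eigenvalue in the leaf direction while leaving a strictly positive form on the transverse subspace. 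The main obstacle is precisely this block-structure calculation---identifying the positive definite part and the rank-one correction in a common parallel frame, and checking that the cancellation is exact in the leaf direction and strictly positive transversally. Once this is done, plurisubharmonicity of $u$ and the HCMA equation emerge together.
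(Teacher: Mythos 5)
This lemma is not proved in the paper; it is stated with the citation \cite{ls}, Theorem~5.6, so there is no internal argument to compare your proposal against. That said, your outline mirrors the strategy of that source: $E$ strictly plurisubharmonic via $\partial\bar\partial E = \tfrac{i}{2}\Omega$ together with a positivity check in the parallel Jacobi-field frame, and degeneracy of $\partial\bar\partial u$ via harmonicity of $u\circ\psi_\gamma = |\tau|/\sqrt2$ on each half-leaf.

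The step you defer as ``the main obstacle'' is a genuine gap, and it is sharper than the word ``bookkeeping'' suggests. From the identity $\partial\bar\partial u = \tfrac1{2u}\partial\bar\partial E - \tfrac1u\,\partial u\wedge\bar\partial u$ together with $H_u(\mathcal{E}^{1,0},\mathcal{E}^{1,0})=0$, one cannot conclude $H_u\geq 0$: writing a general $(1,0)$-vector as $V = c\,\mathcal{E}^{1,0} + W$ with $W\in V_z^{1,0}=\ker\partial E\cap T^{1,0}$, one finds
\[
u\,H_u(V)=\re\bigl(\bar c\,H_E(\mathcal{E}^{1,0},W)\bigr) + \tfrac12 H_E(W),
\]
which can be made negative by a suitable choice of $c$ whenever the cross term $H_E(\mathcal{E}^{1,0},W)$ is non-zero. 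So the real content is the $H_E$-orthogonality of the leaf direction to $V_z^{1,0}$ --- block-diagonality of $H_E$ in the leaf/transverse decomposition --- not merely the cancellation of a single diagonal entry, which is all your write-up records. This orthogonality does hold, for a reason worth making explicit: $H_E$ is a multiple of $\Omega(\cdot,J\cdot)$ by Lemma~\ref{ddde}, $V_z$ is $J$-invariant (\cite{ls}, Theorem~5.3), and the splitting formula $\Omega(\xi,\eta)=g(\pi_*\xi,K\eta)-g(\pi_*\eta,K\xi)$ gives $\Omega(\xi_n,W)=g(\dot\gamma,KW)$ and $\Omega(\eta_n,W)=-g(\pi_*W,\dot\gamma)$, both of which vanish for $W\in V_z=\ker\Theta\cap\ker dE$. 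Making this orthogonality explicit is what turns your outline into a proof; once the cross terms are known to vanish, your rank argument and the reduction to positivity of $\im a_{jk}$ on the transverse block go through as you describe.
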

\begin{lemma}\label{nantiholo} (\cite{ls}, Theorem 5.7)
$N_{-1}$ is an antiholomorphic involution of $T^RM$.
\end{lemma}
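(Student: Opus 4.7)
The plan is to reduce the lemma to the uniqueness of adapted complex structures (Proposition \ref{uniqueacs}). Write $N := N_{-1}$. First I would verify that $N$ is a real analytic involution preserving $T^RM$ (because the energy $E$ is $N$-invariant) and that $N$ carries each leaf of the Riemann foliation to itself. Indeed, from $\psi_\gamma(\sigma+i\tau)=N_\tau\dot\gamma(\sigma)$ one reads off
\[
N\circ \psi_\gamma(\sigma+i\tau)=N_{-\tau}\dot\gamma(\sigma)=\psi_\gamma(\sigma-i\tau),
\]
so $N$ fixes each $\mathfrak{C}_\gamma$ setwise and, through $\psi_\gamma$, acts as complex conjugation $w\mapsto\bar w$ on the parameter.

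Next I would introduce the candidate complex structure $J':=-N^{\ast}J$, where $(N^{\ast}J)_z=(dN_z)^{-1}\circ J_{N(z)}\circ dN_z$, and check that $J'$ is an adapted complex structure on $T^RM$. Clearly $(J')^2=(N^{\ast}J)^2=N^{\ast}(J^2)=-I$, so $J'$ is an almost complex structure; integrability of an almost complex structure is preserved both by diffeomorphism pullback and by negation (negating $J$ leaves the Nijenhuis tensor unchanged), so $J'$ is integrable. For any complexified geodesic, the first paragraph yields $N(\mathfrak{C}_\gamma)=\mathfrak{C}_\gamma$, and the assumed adaptedness of $J$ gives $J(T\mathfrak{C}_\gamma)\subset T\mathfrak{C}_\gamma$; together these produce
\[
J'(T_z\mathfrak{C}_\gamma)=-(dN_z)^{-1}J_{N(z)}dN_z(T_z\mathfrak{C}_\gamma)=-(dN_z)^{-1}J_{N(z)}(T_{N(z)}\mathfrak{C}_\gamma)\subset T_z\mathfrak{C}_\gamma.
\]
A $J'$-invariant even-dimensional real submanifold of an integrable almost complex manifold is automatically a complex submanifold, so $J'$ is adapted.

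Proposition \ref{uniqueacs} now forces $J'=J$, i.e.\ $N^{\ast}J=-J$, which is exactly the condition $dN\circ J=-J\circ dN$ that $N$ be antiholomorphic; combined with the evident identity $N^2=\mathrm{id}$, this finishes the proof. The main substantive input is the invariance of the Riemann foliation under $N$; after that, everything is formal manipulation of the adapted complex structure, leveraged by the uniqueness statement. The only point I would take care with in a reader-facing writeup is to confirm that the definition of ``adapted'' in Section~\ref{sec:acs} is just the requirement that the leaves be complex submanifolds, so that no further compatibility needs to be transferred from $J$ to $J'$.
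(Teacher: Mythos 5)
The paper itself contains no proof of this lemma --- it simply cites Theorem~5.7 of Lempert and Sz\H{o}ke --- and the argument there is essentially the one you propose, via uniqueness of the adapted complex structure. Your outline is correct in structure but has a gap exactly at the point you flag in your last sentence, and it matters. Showing $J'(T_z\mathfrak{C}_\gamma)\subset T_z\mathfrak{C}_\gamma$ proves only that the leaves are $J'$-invariant; it does not show that $J'$ restricted to $\mathfrak{C}_\gamma$ agrees with, rather than conjugates, the complex structure coming from $\psi_\gamma$. That stronger condition cannot be dropped from the meaning of ``adapted'': if leaf-invariance alone sufficed, then $J$ and $-J$ would both be adapted (negation preserves both integrability and invariance), contradicting Proposition~\ref{uniqueacs}. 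More pointedly, your invariance computation applies verbatim to $J'':=N^*J$ with no minus sign, so without the orientation check the very same argument would yield $N^*J=J$, i.e.\ that $N_{-1}$ is \emph{holomorphic} --- clearly absurd. So the sign in $J'=-N^*J$ is load-bearing and must be justified.

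Fortunately the missing verification is short and uses exactly the leaf-level conjugation you established in your first paragraph. With $w=\sigma+i\tau$ the parameter pulled back by $\psi_\gamma$, set $e_1=\psi_{\gamma*}\partial_\sigma$ and $e_2=\psi_{\gamma*}\partial_\tau$; adaptedness of $J$ means $Je_1=e_2$ along each leaf. From $N_{-1}\circ\psi_\gamma=\psi_\gamma\circ c$, with $c$ complex conjugation on $\c$, one reads off $dN(e_1)=e_1$ and $dN(e_2)=-e_2$ at the conjugate parameter value, and hence
\[
J'_z e_1 \;=\; -(dN_z)^{-1}J_{N(z)}(e_1) \;=\; -(dN_z)^{-1}(e_2) \;=\; e_2,
\]
the sign from the definition of $J'$ cancelling against the sign from $dc(\partial_\tau)=-\partial_\tau$. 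Thus $J'$ restricts to the $\psi_\gamma$-structure on every leaf, so it is adapted in the sense required by Proposition~\ref{uniqueacs}, and the rest of your argument goes through unchanged.
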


Since $N_{-1}$ is antiholomorphic, it follows that $M$, being the fixed point set of $N_{-1}$, is real analytic. This shows that analyticity is a necessary condition for the existence of the adapted complex structure. Looking at the existence of an adapted complex structure, we have
\begin{theorem}
(Theorem 2.2 \cite{szoke}) Let $M$ be a compact real analytic manifold equipped with a real analytic metric $g$. Then there exists $R>0$ such that $T^RM$ carries a unique adapted complex structure. 
\end{theorem}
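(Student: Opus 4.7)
The plan is to construct $J$ by pulling back a natural complex structure from a complexification of $M$ via a twisted exponential map; uniqueness is already furnished by Proposition \ref{uniqueacs}.

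Since $g$ is real analytic, one first complexifies $(M, g)$: there is a complex manifold $M_\c$ containing $M$ as a maximally totally real real analytic submanifold, together with a holomorphic symmetric $2$-tensor $g_\c$ on $M_\c$ restricting to $g$. This is standard (Bruhat--Whitney, or gluing local analytic chart complexifications). The geodesic equation is a second order real analytic ODE, so its solutions extend holomorphically: each geodesic $\gamma: \r \to M$ extends to a holomorphic curve $\gamma_\c: \Omega_\gamma \to M_\c$ on a neighborhood $\Omega_\gamma \subset \c$ of $\r$, and equivalently $\exp: TM \to M$ extends to a holomorphic map $\exp_\c$ from a neighborhood of the zero section of $TM_\c$ to $M_\c$.

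I would then define $\Phi: TM \to M_\c$ by $\Phi(x, v) = \exp_\c(x, iv)$. At a zero section point $(x_0, 0)$, identify $T_{(x_0,0)} TM \cong T_{x_0} M \oplus T_{x_0} M$ (horizontal plus vertical) and $T_{x_0} M_\c = T_{x_0} M \oplus J_{M_\c} T_{x_0} M$. A direct computation in normal coordinates yields
\[
d\Phi|_{(x_0,0)}(u, w) = u + J_{M_\c} w,
\]
an $\r$-linear isomorphism. By the inverse function theorem and compactness of $M$, there is a uniform $R > 0$ such that $\Phi|_{T^R M}$ is a real analytic diffeomorphism onto an open subset of $M_\c$. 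Set $J := \Phi^* J_{M_\c}$ on $T^R M$; in the splitting above this reduces at the zero section to $J(u, w) = (-w, u)$, confirming smoothness across $0_M$.

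To check that $J$ is adapted, take an arc-length geodesic $\gamma$ and observe that
\[
\Phi(\psi_\gamma(\sigma + i\tau)) = \exp_\c(\gamma(\sigma), i\tau \dot\gamma(\sigma)) = \gamma_\c(\sigma + i\tau),
\]
because both sides are the holomorphic continuation to complex time of the real geodesic through $\gamma(\sigma)$ with velocity $\dot\gamma(\sigma)$. Thus $\Phi$ identifies $\mathfrak{C}_\gamma \cap T^R M$ with the complex curve $\gamma_\c(\Omega_\gamma) \cap \Phi(T^R M)$, so $\mathfrak{C}_\gamma \cap T^R M$ is $J$-holomorphic. The main obstacle I anticipate is securing a \emph{uniform} radius $R > 0$: pointwise the construction is routine, but compactness of $M$ together with Cauchy-type estimates on the holomorphic extension of $\exp_\c$ are needed to bound below the radius of holomorphic convergence uniformly across $M$.
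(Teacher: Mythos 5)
Your proposal is correct, but it takes the ``alternative form'' route of Guillemin--Stenzel \cite{gs,gs2} rather than the intrinsic route of Sz\H{o}ke \cite{szoke}, which is the source the paper actually cites (the theorem is quoted without proof here). Sz\H{o}ke constructs $J$ directly on $TM$: one picks the Jacobi fields $Y_j, Z_j$ along a geodesic $\gamma$, writes $Z_k = \sum \tilde a_{jk} Y_j$, proves (using real analyticity of $g$) that the $\tilde a_{jk}$ continue meromorphically to a strip with $\im(a_{jk})$ symmetric positive definite off $\r$, and then \emph{defines} $J$ by the explicit formula (\ref{j}); integrability and smooth extension across $0_M$ are then checked. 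Your construction instead complexifies $(M,g)$ \`a la Bruhat--Whitney, extends the geodesic flow holomorphically, sets $\Phi(x,v)=\exp_\c(x,iv)$, verifies $d\Phi$ is an $\r$-isomorphism at the zero section, and pulls back $J_{M_\c}$; the adaptedness follows from the identity $\Phi\circ\psi_\gamma=\gamma_\c$. Both are standard and known to produce the same structure. What each buys: your route gives integrability for free and exhibits $T^RM$ as biholomorphic to a neighborhood of $M$ in $M_\c$ (which is conceptually how the Grauert tube got its name), whereas Sz\H{o}ke's route produces exactly the formula (\ref{j}) and the matrix $(a_{jk})$ that this paper leans on throughout, most visibly in Lemma \ref{extendj} and in the extension arguments of section \ref{sec:algsuff}. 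Your flagged concern about a uniform $R>0$ is the genuine technical point in either approach; it is handled by compactness plus Cauchy--Kowalevski-type bounds on the holomorphic continuation, as you indicate. Uniqueness via Proposition \ref{uniqueacs} is invoked correctly.
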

\begin{rmk}
We will assume in what follows that $R$ is the maximal radius to which the adapted complex structure can be extended. We call $T^RM$ the \emph{Grauert tube} of $(M, g)$, and $R$ its radius. Since $T^RM$ admits a strictly plurisubharmonic exhaustion function, $T^RM$ is Stein.
\end{rmk}
\vs
\begin{rmk}
If there is an adapted complex structure on all of $TM$, {\em i.e.}, the maximal $R$ above is $\infty$, then we say that $M$ has an \emph{entire} tube. 
\end{rmk}
%
%
%
%
%
%
%
%
%
%
%
%
%
%
%
%
%
%
%
\section{Zoll spheres with entire Grauert tubes}\label{sec:entirezoll}
In this section we prove:
\begin{theorem}\label{thm:mainzollthm}
Let $M$ be a real analytic Riemannian manifold diffeomorphic to $S^n$ and Zoll with period $2\pi$, such that the adapted complex structure is entire $(R=\infty)$. Then $M$ is isometric to a round sphere. 
\end{theorem}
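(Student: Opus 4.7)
\emph{Proof proposal.} My plan is to construct a complex compactification $\bar X$ of $TM$ by adding a divisor $D$ at infinity that parametrizes the oriented closed geodesics of $(M,g)$, identify $(\bar X,D)$ with $(\q^n,\q^{n-1})$ using the Kobayashi--Ochiai \cite{ko} and Kachi--Koll\'ar \cite{kk} classification of Fano manifolds of large index, and then recover the round metric from the rigidity of the adapted complex structure.

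First I would build the compactification leaf by leaf. Since every geodesic $\gamma$ closes with period $2\pi$, the immersion $\psi_\gamma:\c\to TM$ descends to the cylinder $\c/2\pi\z$, so each complexified geodesic $\mathfrak C_\gamma$ is biholomorphic to $\c^*$. Emulating the one-dimensional construction sketched in the introduction, on each half-cylinder $\{\pm\tau>\tau_0\}$ I would use a holomorphic coordinate of the form $\exp\bigl(-(u+iu^*)/c\bigr)$ with $u=\log\sqrt E$ to fill in a single point at infinity; this turns the compactified leaf $\bar{\mathfrak C}_\gamma$ into a copy of $\p^1$ whose two added endpoints represent the oriented geodesic $\gamma$ and its time-reversal. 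To globalize, the globally constant Zoll period makes the foliation locally trivial near infinity, so gluing the added endpoints across the leaf space assembles a smooth divisor $D$ and turns $\bar X:=TM\sqcup D$ into a compact complex $n$-fold.

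Next I would show that $\bar X$ is projective with $D$ ample. By Lemma \ref{spshpsh}, $E$ is strictly plurisubharmonic on $TM$; analysing the asymptotics of $E$ along and transverse to each leaf as $|\tau|\to\infty$ lets one equip the line bundle $[D]$ with a hermitian metric whose curvature extends $\ddb E$ to a smooth K\"ahler form on all of $\bar X$. Kodaira embedding then yields projectivity of $\bar X$ and ampleness of $D$. Each compactified leaf $\bar{\mathfrak C}_\gamma$ is a smooth rational curve meeting $D$ in the $2$ added endpoints, and a computation of its normal bundle in $\bar X$ via the Jacobi field description of parallel vector fields given after~(\ref{pvf2}) gives $-K_{\bar X}\cdot\bar{\mathfrak C}_\gamma=2n$. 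These numerical data force $-K_{\bar X}\equiv n\,D$, i.e., $\bar X$ is a Fano manifold of index $n$. The Kobayashi--Ochiai theorem (with Kachi--Koll\'ar absorbing any mild singularity that might remain in the construction) then identifies $\bar X$ with $\q^n$, with $D$ mapped onto a hyperplane section $\q^{n-1}$.

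Finally, the antiholomorphic involution $N_{-1}$ of Lemma \ref{nantiholo} extends to an antiholomorphic involution of $\q^n$ whose fixed set $M$ is diffeomorphic to $S^n$; up to biholomorphism of $\q^n$, the only such real structure has as fixed locus the standard round $S^n\subset\q^n$. The uniqueness statement of Proposition \ref{uniqueacs} then forces $g$ to coincide with the round metric (normalized so that the geodesic period is $2\pi$). I expect the main technical obstacle to lie in the compactification step: verifying that $\bar X$ is genuinely \emph{smooth} along $D$, and that $[D]$ is \emph{ample} rather than merely nef, requires a sharp analysis of how the HCMA potential $\log\sqrt E$ behaves along and transverse to the Riemann foliation as $|\tau|\to\infty$. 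The entirety hypothesis provides the room to perform this asymptotic analysis, and the Zoll condition ensures that the gluing structure is coherent across the leaf space.
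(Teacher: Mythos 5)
Your overall strategy coincides with the paper's: compactify $TM$ by adding the divisor $D$ of oriented geodesics, extend the adapted complex structure across $D$, build a K\"ahler form on $\bar X$ coming from a metric on $[D]$, apply Kodaira embedding, compute $-K_{\bar X}\cdot\bar{\mathfrak C}_\gamma = 2n$ from the Jacobi field description, deduce $K_{\bar X}\cong\o(-nD)$, invoke Kobayashi--Ochiai and Kachi--Koll\'ar to identify $(\bar X, D)$ with $(\q^n, \q^{n-1})$, and use $N_{-1}$ to normalize the real structure. These steps match the paper closely.

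However, the final step has a genuine gap. You write that ``the uniqueness statement of Proposition \ref{uniqueacs} then forces $g$ to coincide with the round metric.'' That proposition states that a \emph{given} metric $g$ admits at most one adapted complex structure on $T^RM$; it does \emph{not} say that the biholomorphism type of $(TM, J)$ together with the real structure determines $g$. After the identification $(\bar X, D, M)\cong(\q^n, \q^{n-1}, S^n)$, what you have is a biholomorphism $TM\to\q_{\mathrm{aff}}$ carrying $M$ to $S^n$; you still need an argument that the metric induced on $S^n$ from $(M,g)$ is the round one. The paper closes this gap by comparing the two HCMA potentials on $\q_{\mathrm{aff}}$: $u_1:=\sqrt{E}$ coming from $(M,g)$ and $u_0$ coming from the round metric both satisfy HCMA, vanish on $S^n$, and are $\log|z|^2 + O(1)$ at infinity, so both are the Siciak--Zaharjuta extremal function of $S^n$ in $\q_{\mathrm{aff}}$ and hence coincide. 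Since the Riemannian metric on $M$ is the restriction of the K\"ahler metric with potential $u_i^2 = E$, equality of $u_0$ and $u_1$ gives $g = g_0$. Without such a comparison, or an equivalent rigidity statement showing the adapted complex structure plus center determines the metric, the last sentence of your proof does not follow.

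A smaller point: extending ``$\ddb E$'' to a K\"ahler form on $\bar X$ is too naive as stated, since $E\to\infty$ at $D$ and $\ddb E$ does not extend there. The paper instead uses the bounded potential $\rho = \log(1+\cosh 2\sqrt{2E})$ (modeled on the Fubini--Study potential on $\q^n$), which after subtracting $\log|w|^2$ (with $w$ a local holomorphic defining equation of $D$) extends smoothly to $\bar X$ and realizes the curvature form of a metric on $\o(D)$. Your sketch has the right idea (hermitian metric on $[D]$) but the explicit choice of potential is where the work lies, and $\ddb E$ itself is not the right object.
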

To prove this, we have to look at the properties of Zoll manifolds. \cite{besse}

Fix an arc-length parametrized geodesic $\gamma$ in $M$. The moduli space of oriented geodesics $N^+$ is a manifold and the tangent space of $N^+$ corresponds to the space of normal Jacobi fields (c.f. \cite{besse}). Explicitly, since $M$ is Zoll, the geodesic flow induces a free $S^1$ action on $S_1M$, the unit tangent bundle of $TM$ with respect to $g$, and we may identify $N^+ = S_1M/S^1$. A neighborhood of $\gamma\in N^+$ can be given as below. 

For $m\in\gamma$ let $N_\gamma M$ be the normal subspace to $\gamma$ at $m$ with respect to $g$ and let $U$ be a open neighborhood around $0$ in $N_\gamma M$. Let $u,v \in U \subset N_\gamma M$. Consider
\begin{equation}
\gamma_{u,v} (s) = \exp_{\exp_m u} s \left(P_u \frac{\dot{\gamma}(0) + v}{\sqrt{1+g(v,v)}}\right),
\end{equation}
where $P_u$ is the parallel transport along the curve $t\mapsto \exp_m tu$ to $t=1$. Then for $u,v\in U$, $\gamma_{u,v} (s)$ is a geodesic. We have 
\begin{equation}
P_u \frac{\dot{\gamma}(0) + v}{\sqrt{1+g(v,v)}}\in T_{\exp_m u}M
\end{equation}
with unit length. Let $\Pi$ be the projection map from the unit tangent bundle to $N^+$. Following \cite{besse}, we have
\begin{equation}
U\times U \ni (u,v)\mapsto \Pi \left( \exp_m u, P_u \frac{\dot{\gamma}(0) + v}{\sqrt{1+g(v,v)}}\right)
\end{equation}
is of maximal rank.

Relative to a local frame $\{e_i\}$ of $N_\gamma M$, let $u_i$ (resp. $v_i$) be the standard coordinates corresponding to $u$ (resp. $v$) relative to $e_i$, then $\frac{\d}{\d u_i}$ at $(u,v)=(0,0)$ corresponds to the Jacobi field $Y$ such that $Y(0) = e_i$ and $Y'(0) = 0$; $\frac{\d}{\d v_i}$ at $(0,0)$ corresponds to the Jacobi field $Z$ such that $Z(0) = 0$ and $Z'(0) = e_i$ \cite{besse}.

So $\gamma_{u,v} (s)$ is a $(2n-2)$-parameter family of geodesics around $\gamma$.

Now let $m = \gamma(0)$ and $z=\dot\gamma (0)\in T_m M$. Let $\{e_i\}_{i=1...n-1}$ be an orthonormal basis of $N_\gamma M$ at $m$. Define parallel vector fields $\xi_i$ and $\eta_i$ as in ~\eqref{pvf1} and ~\eqref{pvf2}. Then we have the meromorphic functions $a_{ij}$ on $\mathfrak{C}_\gamma$. 

As $\gamma$ is closed with length of $2\pi$, $\psi_\gamma$ is actually a map from $\c / 2\pi\z$ to $TM$. 

Now consider the map $\psi :\c / 2\pi\z \times U\times U\rightarrow TM$.
\begin{equation}
(\sigma + i\tau, u, v)\mapsto \left.\frac{d}{ds}\exp_{\exp_m u} s\tau \left(P_u \frac{\dot{\gamma}(0) + v}{\sqrt{1+g(v,v)}}\right)\right|_{s=\sigma}.
\end{equation}
Here $U$ is a small open neighborhood of $N_\gamma M$ around 0. Then $\psi$ restricted to $\{\tau >0\}\times U \times U$ (resp. $\{ \tau <0\}\times U \times U$) is a diffeomorphism onto its image when $U$ is small enough. Let $u = \sum u_j e_j$ and $v = \sum v_j e_j$. Using this diffeomorphism, we see that \cite{besse}
\begin{equation}
\xi_j(i)=\frac{\d}{\d u_j}\mbox{ at }(i,0,0);\quad\mbox{ and}
\end{equation}
\begin{equation}
\eta_j(i)=\frac{\d}{\d v_j}\mbox{ at }(i,0,0).
\end{equation}

Since $N_\tau$ and $\phi_\sigma$ commute with $\frac{\d}{\d u}$ and $\frac{\d}{\d v}$, the above two equations hold at any point in the leaf $\mathfrak{C}_\gamma$. This can also be interpreted as saying that $\frac{\d}{\d u_j}$ and $\frac{\d}{\d v_j}$ are given by Jacobi fields. As geodesic flows and reparametrizations preserve Jacobi fields (after reparametrizations), $\xi_j$ and $\eta_j$ correspond to $\frac{\d}{\d u_j}$ and $\frac{\d}{\d v_j}$, respectively, under this map.

Looking at the case of the CROSSes as cited in the introduction, $TS^n$ can be compactified to a smooth projective variety, namely, the non-singular quadric $\q^n \subset \c\p^{n+1}$. To compare our Zoll manifold $M$ with a round sphere, we first want to compactify $TM$ into a complex manifold $\bX$, and then compare $\bX$ with $\q^n$. 

Since $\psi_\gamma (\c-\r)$ gives a foliation of $TM-0_M$ by real surfaces, $TM-0_M$ is foliated by punctured disks $\D^*$ given by the positive and negative imaginary parts in $(\c-\r)/2\pi\z$. Thus we can compactify $TM$ to $\bX$ by filling in the punctures of these disks. Note that $\bX$ is a smooth manifold because the map $\psi:\D^*\times U\times U$ is a diffeomorphism at $\tau \neq 0$ when $U$ is small enough. Let $N^+:=\bX-TM$. Then $N^+$ corresponds to the set of oriented geodesics on $M$. Each complexified geodesic is compactified to a $\c\p^1\subset \bX$. We denote each compactified complexified geodesic by $C_\gamma$ or just $C$. 

Note that the parallel vector fields can be extended to smooth vector fields along the compactification since under $\psi$, they correspond to $\frac{\d}{\d u_j}$ and $\frac{\d}{\d v_j}$. Now we are ready to show that $\bX$ is indeed a complex manifold.
\begin{lemma}\label{extendj}
Let $J$ be the adapted complex structure on $TM$. Then $J$ can be extended to a complex structure on $\bX$.
\end{lemma}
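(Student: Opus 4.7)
The plan is to construct local complex coordinates on $\bar X$ near $N^+$ compatible with $J$ on $TM$. Work near an arbitrary point $p\in N^+$; applying the antiholomorphic involution $N_{-1}$ if needed, assume $p$ is the compactification point added at ``$\tau=+\infty$'' of some leaf $\mathfrak{C}_\gamma$. Using the parameterization $\psi$ discussed above, one obtains local real coordinates $(w,u,v)$ in a neighborhood of $p$ in $\bar X$, where $w=e^{i(\sigma+i\tau)}$ is a natural coordinate along the compactified leaf $C_\gamma\cong\c\p^1$ (so that $p$ corresponds to $w=0$) and $(u,v)\in N_\gamma M\oplus N_\gamma M$ are the transverse Jacobi parameters. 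In these coordinates the parallel vector fields $\xi_j,\eta_j$ along $\mathfrak{C}_\gamma$ coincide with the coordinate fields $\d/\d u_j,\d/\d v_j$, which are smooth across $w=0$.

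Separating the formula \eqref{j} into types gives the holomorphic identity $\eta_k^{1,0}=\sum_j a_{jk}(z)\,\xi_j^{1,0}$ along each leaf, so on $TM\cap\{w\neq 0\}$ the $(0,1)$-tangent bundle is spanned in $(w,u,v)$ coordinates by $\d/\d\bar w$ and by the $n-1$ transverse vector fields $\eta_k-\sum_j a_{jk}(z)\xi_j$ for $k=1,\dots,n-1$. Consequently, constructing the extension $\tilde J$ at $p$ reduces to extending each meromorphic function $a_{jk}(z)$ as a holomorphic function of $w=e^{iz}$ across $w=0$. Once that is in hand, local holomorphic coordinates on $\bar X$ near $p$ are built as $(w,Z_1,\dots,Z_{n-1})$ with $Z_j:=u_j+\sum_k a_{jk}(z)v_k$ modulo higher-order corrections forced by $\dbar Z_j=0$; integrability and smoothness of $\tilde J$ are then automatic from agreement with the integrable $J$ on the dense open subset $TM$, while Proposition~\ref{uniqueacs} guarantees that the construction is independent of the choices made.

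The crux is therefore the holomorphic extension of the $a_{jk}$ to $w=0$. Because $\gamma$ is Zoll of period $2\pi$ and the tube is entire, $a_{jk}$ is a $2\pi$-periodic meromorphic function on $\c$ with poles confined to $\r$, hence a holomorphic function of $w$ on the punctured disk $\{0<|w|<1\}$. I would rule out an essential singularity at $w=0$ by combining the boundedness of the Jacobi fields $Y_j, Z_j$ along the closed geodesic $\gamma$ (which prevents $\tilde a_{jk}(\sigma)$ from developing exponentially-growing Fourier modes in $\sigma$) with the positive-definiteness of $\mathrm{Im}(a_{jk})$ on $\{\tau>0\}$ and the strict plurisubharmonicity of $E$ (Lemma~\ref{spshpsh}), which jointly force $a_{jk}$ to remain bounded as $\tau\to+\infty$ and hence extend holomorphically to $w=0$. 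This asymptotic control at the added boundary is the main obstacle; everything else---identifying the $(0,1)$-bundle in the chosen coordinates and constructing the $Z_j$---is a formal computation.
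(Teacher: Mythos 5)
Your plan correctly isolates the crux — extending the matrix $a_{jk}$ holomorphically across $w=0$ — but the argument you offer to do it does not hold up, and you omit a second step the proof actually needs.

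On the extension: you invoke boundedness of the Jacobi fields $Y_j, Z_j$ along the closed geodesic to "prevent exponentially-growing Fourier modes," but this controls $\tilde a_{jk}$ only on the real axis (where it already has poles at the discrete set $S$, so is not even bounded there), and says nothing about the behavior of the holomorphic extension as $\tau\to+\infty$, i.e.\ near $w=0$. You then assert that positive-definiteness of $\im(a_{jk})$ together with strict plurisubharmonicity of $E$ "jointly force $a_{jk}$ to remain bounded," but this is the conclusion, not an argument — positivity of a Hermitian form does not by itself bound the entries. The actual mechanism is more elementary and purely function-theoretic: $\im a_{ii}>0$ means each diagonal $a_{ii}$ maps the punctured disk into the upper half-plane, which rules out both a pole and an essential singularity at $w=0$ (Great Picard, or composing with the Cayley transform to make it bounded and applying Riemann's removable singularity theorem), hence $a_{ii}$ extends holomorphically. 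For the off-diagonal terms one uses the $2\times 2$ principal minor inequality $(\im a_{ij})^2<(\im a_{ii})(\im a_{jj})$ to get boundedness of $\im a_{ij}$ near $w=0$, and the same removability argument applies. Nothing about Jacobi field norms or strict plurisubharmonicity is used.

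You also do not address why $\im(a_{jk})$ remains invertible \emph{at} $w=0$, which is indispensable: the extended $J$ is defined through $(e_{jk})=(\im a_{jk})^{-1}$, so mere holomorphic extendability of $a_{jk}$ is not enough. The clean way is to fix $v\in\c^{n-1}\setminus\{0\}$ and observe that $v^t(\im a_{jk})v$ is harmonic on $\D$, strictly positive on $\D^*$, and therefore strictly positive at $0$ by the maximum principle; hence $\im(a_{jk})(0)$ is positive definite. Finally, the claim that integrability is "automatic by continuity" is fine, but smoothness of the extended $J$ in the transverse parameters is not automatic from the leafwise construction; it requires a separate argument (the Cauchy integral formula, expressing $a_{jk}^\gamma(0)$ as a mean of $a_{jk}^\gamma$ over a circle where smoothness in $\gamma$ is already known). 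Your explicit coordinates $Z_j=u_j+\sum_k a_{jk}v_k$ "modulo higher-order corrections" would ultimately need all of these ingredients anyway, so the sketch as written leaves the hard points unresolved.
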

\begin{proof}
If $J$ can be extended to $\bX$ smoothly, then $J$ is integrable on $\bX$ by continuity. So it suffices to extend $J$ smoothly to $\bar{X}$. 

Along $\mathfrak{C}$, the complex structure $J$ can be extended to $C$ by using the structure in $\p^1$. It suffices to extend the structure to the normal directions to $\mathcal{C}$. We can do this by looking at the action of $J$ on $\frac{\d}{\d u_j}$ and $\frac{\d}{\d v_j}$ locally in $\D\times U\times U$. We already know the action of $J$ on $\D^*\times U\times U$ as in (\ref{j}). To extend $J$ smoothly, we just have to extend $a_{ij}$ smoothly to the origin such that $\im (a_{ij})$ is invertible at the origin.

We know that $\im (a_{ij})$ is symmetric and positive definite in $\D^*$. Any diagonal entry in a positive definite matrix must be positive, which means $\im a_{ii} >0$ in $\D^*$. But $a_{ii}$ is holomorphic in $\D^*$ with $\im a_{ii} >0$. This shows that $a_{ii}$ can be extended to a holomorphic function in $\D$ by the Little Picard theorem or the fact that the upper half plane does not contain a neighborhood of $\infty$. Any principal minor of a symmetric positive definite matrix is positive. Let $i< j$, a $2\times 2$ principal minor
\begin{equation}
\begin{pmatrix}
\im a_{ii} &\im a_{ij}\\
\im a_{ij} &\im a_{jj}
\end{pmatrix}
\end{equation}
is positive definite with positive diagonal entries. This means 
\begin{equation}
(\im a_{ij})^2 < (\im a_{ii}) (\im a_{jj}).
\end{equation}
We already know $a_{ii}$ are holomorphic, hence bounded in a neighborhood of $0$. This means that in a neighborhood of 0, $\im a_{ij}$ is bounded. Using the same argument (Little Picard or $\infty$ neighborhood), we can see that $a_{ij}$ can be extended to a holomorphic function on $\D$. 

It suffices to show that $(\im a_{ij})$ is invertible at $0$. Fix $v\in \c^n-\{0\}$ and $A =(\im a_{ij})$ be the matrix. Then $v^t Av$ is a harmonic function on $\D$. Since $A$ is positive definite in $\D^*$ we have $v^t A v >0$ in $\D^*$. By the maximum principle of harmonic functions, we have $v^t A v >0$ in $\D$, which shows that $A$ is positive definite in $\D$, in particular $A$ is invertible. This shows that $J$ can be extended along each leaf of the compactified Riemann foliation.

To show that $J$ is smooth, we use the Cauchy integral formula. Note that for any $\gamma\in U$ near the origin, $\dd[,u_j]$ and $\dd[,v_j]$ correspond to smooth variations of parallel vector fields. Thus $a_{ij}^\gamma$ is smooth in $\D^*\times U \times U$ which can be extended to $\D\times U \times U$ smoothly along each $\D$. Using the Cauchy integral formula, $a_{ij}^\gamma (0) = \frac{1}{2\pi}\int_0^{2\pi} a_{ij}^\gamma(re^{i\theta}) d\theta$. But $a_{ij}^\gamma$ is smooth on $\D^*\times U\times U$. Thus $a_{ij}^\gamma$ is smooth on $\D\times U\times U$. Hence $J$ is smooth and this completes the proof.  
\end{proof}
By Lemma \ref{nantiholo}, we know that $N_{-1}$ is antiholomorphic in $TM$. We can extend this to an antiholomorphic map on $\bX$ by mapping $0$ to $\infty$ along each leaf. Again this is clearly antiholomorphic.

By construction, $J$ preserves $TN^+$. Thus $N^+$ is a complex submanifold of $\bX$.

Ultimately, we want to show that $\bX$ is the same as the model case, which is $\q^n$ and $N^+$ is $\q^{n-1}$. Notice that $\q^n$ is projective. So we would like to use the Kodaira embedding theorem. Let $\q^n\subset \c\p^{n+1}$. In an affine chart, let $\|z\|^2 = \sum_i z^i\bar{z}^i$. Then the K\"ahler potential of the Fubini-Study metric is $\log (1+\|z\|^2)$. On the other hand, in \cite{ls} and \cite{pw} we have $\cosh^{-1} \|z\|^2 = 2\sqrt{2E}$. This suggests that we should look at the potential function $\rho:=\log (1+\cosh 2\sqrt{2E})$ on $\bX$. In \cite{ls}, we know that $E=(\sqrt{E})^2$ is strictly plurisubharmonic and thus $\rho$ is strictly plurisubharmonic.
\begin{prop}\label{kahlerity}
The potential function $\rho :=\log (1+\cosh 2\sqrt{2E})$ defines a K\"ahler form on $TM$, which extends to a K\"ahler form in $\bX$.
\end{prop}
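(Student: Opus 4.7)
The plan is to verify smoothness and strict plurisubharmonicity of $\rho$ on $TM$, and then analyze the extension of $\omega := i\partial\bar\partial \rho$ across the divisor $N^+ = \bar X - TM$. Using the identity $1 + \cosh 2y = 2\cosh^2 y$, rewrite $\rho = \log 2 + 2\log\cosh\sqrt{2E}$; since $\cosh 2\sqrt{2E} = \sum_{k\geq 0}(8E)^k/(2k)!$ is a smooth function of $E$, the function $\rho$ is smooth on all of $TM$, including the zero section. Setting $u = \sqrt{E}$ and $\rho = f(u)$ with $f(t) = \log(1 + \cosh 2\sqrt{2}\,t)$, the identities $(\sinh 2x)/(1+\cosh 2x) = \tanh x$ and $(\tanh x)' = \operatorname{sech}^2 x$ give $f'(t) = 2\sqrt{2}\tanh(\sqrt{2}\,t)$ and $f''(t) = 4\operatorname{sech}^2(\sqrt{2}\,t)$, both strictly positive for $t > 0$. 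Hence on $TM - 0_M$,
\[
i\partial\bar\partial \rho \;=\; f''(u)\,i\partial u \wedge \bar\partial u \,+\, f'(u)\,i\partial\bar\partial u
\]
is a sum of positive semidefinite forms. By Lemma \ref{spshpsh}, the kernel of $i\partial\bar\partial u$ at each point is the one-dimensional complex line $T^{1,0}\mathfrak{C}_\gamma$; since $u|_{\mathfrak{C}_\gamma} = |\tau|/\sqrt{2}$ is not constant in the leaf direction, $\partial u$ does not vanish on this kernel, so the sum is positive definite. Near $0_M$ the Taylor expansion $\rho = \log 2 + 2E + O(E^2)$ and the strict plurisubharmonicity of $E$ (Lemma \ref{spshpsh}) give strict psh of $\rho$.

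For the extension across $N^+$, fix $p \in N^+$ on a leaf $\mathfrak{C}_\gamma$. The compactification adjoins to each such leaf the puncture at $\tau = +\infty$ via $\zeta = e^{i(\sigma + i\tau)}$, and on the leaf $E = \tau^2/2$ with $\tau = -\log|\zeta|$. Direct substitution then yields the key leafwise formula
\[
\rho|_{\mathfrak{C}_\gamma} \;=\; 2\log(1 + |\zeta|^2) - \log|\zeta|^2 - \log 2,
\]
isolating the leaf-singular part of $\rho$ as $-\log|\zeta|^2$. Since $\log|\zeta|^2$ is pluriharmonic on $\{\zeta \neq 0\}$, locally $\omega = i\partial\bar\partial \tilde\rho$ with $\tilde\rho := \rho + \log|\zeta|^2$. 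It therefore suffices to show that $\tilde\rho$ extends smoothly across $N^+$ with $i\partial\bar\partial \tilde\rho$ positive definite at points of $N^+$.

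To establish this transverse smoothness and positivity I would work in a local holomorphic coordinate system $(\zeta_{\mathrm{hol}}, w_1, \ldots, w_{n-1})$ on $\bar X$ at $p$ provided by Lemma \ref{extendj}, in which the matrix $(a_{jk})$ extends holomorphically across $\zeta_{\mathrm{hol}} = 0$ and the formula (\ref{j}) for $J$ is smooth. The main obstacle is that the leafwise coordinate $\zeta$ coming from the $\psi$-parametrization is holomorphic along each individual leaf but not as a function on the full neighborhood, so one must compare it with a genuine holomorphic defining function $\zeta_{\mathrm{hol}}$ for $N^+$ and track the smooth transverse corrections using the smooth extensions of the parallel vector fields $\xi_j,\eta_j$ (equivalently, the smoothed $a_{jk}$). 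Once smoothness of $\tilde\rho$ across $N^+$ is verified, positivity of $i\partial\bar\partial \tilde\rho$ at points of $N^+$ follows from two contributions: the leafwise restriction reproduces the strictly positive Fubini--Study form $2i\partial\bar\partial \log(1 + |\zeta_{\mathrm{hol}}|^2)$ on each leaf, while positivity in the transverse $w_j$ directions is inherited by continuity from the positivity already proved on $TM$, together with the nondegenerate normal contribution coming from the expansion of $J$ in (\ref{j}) at $\zeta_{\mathrm{hol}} = 0$.
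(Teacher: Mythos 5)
Your argument for strict plurisubharmonicity on $TM$ is sound and somewhat cleaner than the paper's: writing $\rho = f(u)$ with $u = \sqrt{E}$, computing $f',f'' > 0$ explicitly, and then observing that the kernel of $i\partial\bar\partial u$ is the leaf direction (along which $\partial u$ is nonzero) is a transparent way to see positivity; the paper instead just invokes strict psh of $E$ and does a direct expansion of $i\partial\bar\partial\rho$ into multiples of $\Omega$ and $dE\wedge\Theta$. Your identification of the leafwise singular part $-\log|\zeta|^2$ and of $\tilde\rho = \rho + \log|\zeta|^2$ as the local potential to be extended is exactly the paper's decomposition $\rho = \log\frac{(1+|fw|^2)^2}{2|f|^2} - \log|w|^2$.

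However, the last third of your proposal is a plan, not a proof, and it contains a genuine gap. First, you correctly identify the obstacle that the leafwise uniformizer $\zeta$ is not holomorphic as a function on a neighborhood of $p\in N^+$, but you do not actually resolve it; the paper's resolution is to write $\zeta = fw$ for a true holomorphic coordinate $w$ vanishing on $N^+$, with $f$ smooth and nonvanishing (smoothness of $f$ being justified by the Cauchy integral formula applied leafwise), which makes $\tilde\rho = \log\frac{(1+|fw|^2)^2}{2|f|^2}$ visibly smooth across $N^+$. Second, and more seriously, the claim that positivity of $i\partial\bar\partial\tilde\rho$ at $N^+$ is ``inherited by continuity from the positivity already proved on $TM$'' does not work: continuity from an open set only yields positive \emph{semi}-definiteness on the closure, and the limit could perfectly well be degenerate in the transverse directions. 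The paper closes precisely this hole by an explicit computation on the extended parallel vector fields $\xi_j,\eta_j$, showing
\[
\omega(\xi_j,\eta_j) = \frac{\sqrt{2}\sinh 2\sqrt{2E}}{2\sqrt{E}\,(1+\cosh 2\sqrt{2E})}\,\Omega(\xi_j,\eta_j) = \frac{\sinh 2\sqrt{2E}}{1+\cosh 2\sqrt{2E}} \longrightarrow 1 \neq 0
\]
as $E\to\infty$, where the crucial point is that the $1/\sqrt{E}$ decay of the coefficient is exactly cancelled by the $\sqrt{2E}$ growth of $\Omega(\xi_j,\eta_j)$ under the scaling $N_s^*\Omega = s\Omega$. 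Your phrase ``nondegenerate normal contribution coming from the expansion of $J$'' gestures at the need for such a computation but does not supply it; without this cancellation the transverse positivity cannot be concluded.
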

\begin{proof}
The only thing to prove here is to prove that the K\"ahler form given by $\rho$ extends to a K\"ahler form on $\bX$. To prove this, first we want to show that the K\"ahler form extends to $\bX$ continuously. To do this, look at each leaf in the Riemann foliation and let $z=\sigma + i\tau \in \c / 2\pi\z$. Here $\sigma$ is the unit speed geodesic parameter and $\tau$ is the length. Hence $\tau = \sqrt{2E}$. Let $\zeta$ be a local coordinate of each leaf near a point in $\bX - TM$. Then we have $\zeta = e^{iz}$ and $\zeta = 0$ corresponds to the point $p$ in $\bX - TM$ in each leaf. Let $U$ be a neighborhood of $p$. Then we have $\tau = -\log |\zeta|$. In this coordinate, in $U-\{ p \}$, 
\[
\rho = \log (1+\cosh 2\tau) = \log \dfrac{(1+|\zeta|^2)^2}{2|\zeta|^2}. 
\]
Notice that $\zeta$ is not holomorphic in $\bX$, but $\zeta$ restricted to each leaf is holomorphic. Let $w$ be a holomorphic coordinate in a neighborhood of $p$ in $\bX$ such that $w=0$ corresponds to $\bX-TM$ and $dw\neq 0$ at $w=0$. Then restricted to the leaf, both $\zeta$ and $w$ are holomorphic and vanish to first order at $p$. Thus we can write $\zeta = f\,w$ for some smooth function $f$, holomorphic along the leaf with $f(p)\neq 0$. It is easy to see that $f$ is locally a smooth function near $p$ in $\bX$ by the Cauchy integral formula, as above.

Returning to our function $\rho$, we have
\[
\rho = \log \frac{(1+|\zeta|^2)^2}{2|\zeta|^2} = \log \frac{(1+|fw|^2)^2}{2|fw|^2} = \log \frac{(1+|fw|^2)^2}{2|f|^2} - \log |w|^2.
\]
Since $w$ is a holomorphic coordinate in $\bX$ and $\d \dbar \log |w|^2 =0$, we see that the function $\log \frac{(1+|fw|^2)^2}{2|f|^2}$ defines the same K\"ahler form as $\rho$ does in $\bX-TM$. But this function is smooth near $p\in TM$ and thus it defines a form in $\bX$ near $p$. Repeating this process for different $p\in \bX - TM$ will give a form on $\bX$ that restricted to $TM$ is a K\"ahler form.

Now we have to show that this form is positive definite at $p\in \bX - TM$. To show this, we compute directly. Let $E=\frac{1}{2}g(x,v)$. By Lemma ~\ref{ddde} and Corollary 5.5 in ~\cite{ls}, we have 
\[
\d E \wedge \dbar E = \frac{i}{2} dE \wedge \Theta
\]
and 
\[
\d\dbar E = \dfrac{i}{2}\Omega.
\]
Thus we have 
\[
\d\dbar\sqrt{E} = \dfrac{i}{8E\sqrt{E}}(2E\,\Omega - dE\wedge \Theta)
\]
and
\[
\d \sqrt{E} \wedge \dbar \sqrt{E} = \frac{i}{8E} dE \wedge \Theta.
\]
Then the form is given by
\vs
\[
\begin{array}{lcl}
\omega:&=&-i\d\dbar \log (1+\cosh 2\sqrt{2E})\\
	&	&	\\
       &=&-i\d \left( \dfrac{2\sqrt{2}\sinh 2\sqrt{2E}}{1+\cosh 2\sqrt{2E}}\,\dbar\sqrt{E}\right)\\
	&	&	\\
	&=& -i\left(\dfrac{2\sqrt{2}\sinh 2\sqrt{2E}}{1+\cosh 2\sqrt{2E}}\,\d \dbar\sqrt{E} + \dfrac{8}{1+\cosh 2\sqrt{2E}}\d \sqrt{E}\wedge \dbar \sqrt{E}\right).\\
	&	&	\\
	&=&\dfrac{\sqrt{2}\sinh 2\sqrt{2E}}{4E\sqrt{E}(1+\cosh 2\sqrt{2E})}(2E\,\Omega - dE\wedge \Theta) \\
	&	&	\\
	&&+ \, \dfrac{1}{E(1+\cosh 2\sqrt{2E})}dE\wedge \Theta.
\end{array}
\]
\vs
Looking at the parallel vector fields $\xi_j$ and $\eta_j$ generated by $w_j$ again ($1\leq j\leq n-1$), we know that $\xi_j$ and $\eta_j$ restricted to $\r$ are normal Jacobi fields and thus they lie in $\ker \Theta \cap \ker dE$. We also know that $\xi_j$ and $\eta_j$ extend to $\bX$ smoothly, as independent vectors at $\bX-TM$. We also have
\[
	\Omega (\xi_j,\eta_j) = g(\pi_* \xi_j, K\eta_j) - g(\pi_*\eta_j, K\xi_j) = g(w_j,w_j) = 1
\] 
at $z=i$. Since $N_s^*\Omega = s\Omega$ for $s\in \r$, we have $\Omega(\xi_j,\eta_j) = \sqrt{2E}$ at any point in the leaf of $TM-0_M$.
Thus we have
\[
	\omega(\xi_j,\eta_j) = \frac{\sqrt{2}\sinh 2\sqrt{2E}}{2\sqrt{E}(1+\cosh 2\sqrt{2E})}\Omega (\xi_j,\eta_j) =\frac{\sinh 2\sqrt{2E}}{(1+\cosh 2\sqrt{2E})}.
\]
This means when $E\rightarrow\infty$ we have $\omega (\xi_j, \eta_j) \rightarrow 1\neq 0$. Since $\Omega (\xi_j, \xi_k) = \Omega(\eta_j,\eta_k) =0$ and $ \Omega(\xi_j,\eta_k)=0$ for $j\neq k$, by continuity $\omega (\xi_j,\xi_k)=\omega (\eta_j,\eta_k)=0$ and $\omega (\xi_j, \eta_k) =0$ for $j\neq k$. This applies to $\xi_n$ and $\eta_n$ in the sense that $\omega(\eta_n, \xi_j)=\omega(\eta_n, \eta_j) = 0$ and similarly for $\xi_n$ ($1\leq j\leq n-1$). We also have $\omega(\eta_n, \xi_n)\neq 0$ because this is the standard Fubini-Study metric on the $C$. Thus $\omega$ is non-degenerate in all directions at $p\in \bX-TM$. 
\end{proof}
Using the details of this proof, we can conclude that
\begin{lemma}\label{positivelinebundle}
$\o(N^+)$ is a positive line bundle.
\end{lemma}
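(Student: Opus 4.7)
The plan is to exhibit a Hermitian metric $h$ on $\o(N^+)$ whose Chern curvature form equals, up to a positive constant, the K\"ahler form $\omega$ from Proposition~\ref{kahlerity}; positivity of $\omega$ will then force $\o(N^+)$ to be positive.

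Let $s$ denote the tautological section of $\o(N^+)$ vanishing simply on $N^+$. On $\bX - N^+ = TM$ the section $s$ trivializes the bundle, and we declare
\[
|s|_h^2 := \frac{1}{1+\cosh 2\sqrt{2E}} = e^{-\rho}.
\]
To see this extends smoothly across $N^+$, work near $p\in N^+$ in a local holomorphic trivialization $e$, so $s = w\cdot e$ for a defining coordinate $w$. From the proof of Proposition~\ref{kahlerity} the leaf coordinate $\zeta$ satisfies $\zeta = fw$ with $f$ smooth and nonvanishing on a neighborhood of $p$, and $|\zeta|^2 = e^{-2\sqrt{2E}}$; setting
\[
|e|_h^2 := \frac{2|f|^2}{(1+|fw|^2)^2}
\]
yields $|s|_h^2 = |w|^2|e|_h^2 = \frac{2|fw|^2}{(1+|fw|^2)^2}$, which is smooth on the entire neighborhood of $p$, vanishes to second order exactly along $N^+$, and coincides with $e^{-\rho}$ where $w\neq 0$. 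A change of trivialization $e' = ue$ corresponds to $w' = w/u$ and $f' = fu$, and direct substitution gives $|e'|_h^2 = |u|^2|e|_h^2$, so the local formulas patch to a smooth Hermitian metric $h$ on $\o(N^+)$.

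To identify the curvature: on $\bX - N^+$, where $s$ itself is a frame, the local potential of $h$ is $-\log|s|_h^2 = \rho$, hence the Chern form is $\tfrac{i}{2\pi}\d\dbar\rho$, which coincides with $\tfrac{1}{2\pi}\omega$ by the very definition of $\omega$ in Proposition~\ref{kahlerity}. Near $p\in N^+$, in the frame $e$, the local potential is instead $\psi := -\log|e|_h^2 = \log\frac{(1+|fw|^2)^2}{2|f|^2}$, which is precisely the smooth function used in the proof of Proposition~\ref{kahlerity} to extend $\omega$ across $N^+$ (one has $\psi = \rho + \log|w|^2$ and $\d\dbar\log|w|^2 = 0$ off $N^+$). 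Consequently $c_1(\o(N^+),h)$ equals $\tfrac{1}{2\pi}\omega$ as smooth $(1,1)$-forms on all of $\bX$.

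Since $\omega$ is K\"ahler (in particular a positive $(1,1)$-form) by Proposition~\ref{kahlerity}, the Chern form of $(\o(N^+),h)$ is positive, so $\o(N^+)$ admits a Hermitian metric of positive curvature and is therefore a positive line bundle. The only real obstacle is the bookkeeping in the construction of $h$ --- verifying that the two local prescriptions of $|s|_h^2$ glue into a globally defined smooth Hermitian metric on $\o(N^+)$; once that is established, the curvature identification is immediate from the local potential already computed in the proof of Proposition~\ref{kahlerity}.
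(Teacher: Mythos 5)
Your proof is correct and follows essentially the same route as the paper: both exhibit the Hermitian metric on $\o(N^+)$ via the local factorization $\zeta=fw$ and the formula $\frac{(1+|fw|^2)^2}{2|f|^2}$, verify the transition law across overlaps, and then identify the Chern curvature with the K\"ahler form $\omega$ of Proposition~\ref{kahlerity} to conclude positivity. The only cosmetic difference is that you phrase the patching as the transformation $|e'|_h^2=|u|^2|e|_h^2$ of a local frame under $e'=ue$, whereas the paper writes the equivalent identity $h_\alpha = \frac{|w_\alpha|^2}{|w_\beta|^2}h_\beta$ directly in terms of the defining functions; these are the same computation.
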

\begin{proof}
We want to construct a hermitian metric $h$ on $\o(N^+)$ such that $-i\d\dbar\log h$ is the K\"ahler form above. As in the proof above, let $p\in N^+$ and $U_\alpha$ be a neighborhood of $p$ in $\bar{X}$. Using the notation above we set 
\[
h_\alpha (x)= \frac{(1+|f_\alpha(x) w_\alpha(x)|^2)^2}{2|f_\alpha(x)|^2}
\]
for $x\in U_\alpha$. Let $g_\alpha$ be a (holomorphic) defining function of $N^+$. Notice that we can pick $g_\alpha=w_\alpha$ since $w_\alpha$ is also a defining function on $U_\alpha$. This means, for another neighborhood $U_\beta$ of $p$, and $x\in U_\alpha \cap U_\beta -N^+$, we have
\[
\begin{array}{ll}
		& \\
	h_\alpha  &= \dfrac{(1+|f_\alpha w_\alpha|^2)^2}{2|f_\alpha|^2} \\
		& \\
		&= |w_\alpha|^2 \dfrac{(1+|f_\alpha w_\alpha|^2)^2}{2|f_\alpha w_\alpha|^2} \\
		& \\
		&= |w_\alpha|^2 \dfrac{(1+|\zeta|^2)^2}{2|\zeta|^2} \\
		& \\
		&= \dfrac{|w_\alpha|^2}{|w_\beta|^2}\dfrac{(1+|f_\beta w_\beta|^2)^2}{2|f_\beta|^2} \\
		& \\
		&= \dfrac{|w_\alpha|^2}{|w_\beta|^2} h_\beta . \\
		&
\end{array}
\]
Since all of $h_\alpha$, $h_\beta$ and $\frac{|w_\alpha|^2}{|w_\beta|^2}$ are continuous in $U_\alpha \cap U_\beta$, we have 
\[
h_\alpha = \frac{|w_\alpha|^2}{|w_\beta|^2} h_\beta 
\]
for all $x\in U_\alpha\cap U_\beta$.
Since $\frac{w_\alpha}{w_\beta}$ is the transition function of $\o(N^+)$, we successfully define a hermitian matrix such that $\o(N^+)$ is positive in a neighborhood of $N^+$. 

To complete the proof, we have to define $h$ away from $N^+$. This is easy, as we can just set 
\[
h=\frac{(1+|\zeta|^2)^2}{2|\zeta|^2} = \frac{1}{|w_\alpha|^2}h_\alpha
\]
Since 1 is a defining function of $N^+$ away from $N^+$, we have a hermitian metric such that $-i\d\dbar \log h = \omega$ is a K\"ahler form, hence positive. This completes the proof.
\end{proof}
Since $\bX$ is K\"ahler and has a positive bundle $\o(N^+)$, by Kodaira's embedding theorem, $\bX$ is projective and thus algebraic. In the following discussion we rename $N^+$ as $D$ to fit in the algebraic geometry world. $D$ is an ample divisor.  

Continuing our attempt to identify $\bX$ with $\q^n$, we next have to figure out what the cohomologies of $\bX$ are. 
\begin{prop}\label{audinaudin}
If $n$ is even, then
\[
H^i(\bX,\z)=\left\{
\begin{matrix}
0&\mbox{ if }&i\mbox{ is odd}\\
\z\oplus \z &\mbox{ if }&i=n\\
\z&\mbox{ if }&\mbox{otherwise.}
\end{matrix}
\right.
\]
If $n$ is odd, then 
\[
H^i(\bX,\z)=\left\{
\begin{matrix}
0&\mbox{ if }&i\mbox{ is odd}\\
\z&\mbox{ if }&i\mbox{ is even}
\end{matrix}
\right.
\]
\end{prop}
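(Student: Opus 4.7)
Plan. The plan is to compute $H^*(\bX;\z)$ via the Thom--Gysin long exact sequence for the closed smooth codimension-two submanifold $D=N^+\subset\bX$; since $\bX\setminus D = TM$ deformation-retracts to the zero section $M\cong S^n$, it reads
\[
\cdots\to H^{i-2}(D;\z)\to H^i(\bX;\z)\to H^i(S^n;\z)\xrightarrow{\delta} H^{i-1}(D;\z)\to\cdots,
\]
and because $H^*(S^n;\z)$ is concentrated in degrees $0$ and $n$, this reduces the problem to the cohomology of $D$ together with the connecting map $\delta$ in degree $n$.

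The essential input is the integral cohomology of $D$. For a Zoll sphere of period $2\pi$, standard results on Zoll manifolds going back to Weinstein and Audin (cf.\ Besse \cite{besse}) identify $N^+$ cohomologically with the oriented Grassmannian $\widetilde{\mathrm{Gr}}_2(\r^{n+1})$, which has the same integral cohomology as the complex quadric $\q^{n-1}$. Explicitly, $H^j(D;\z)=\z$ in every even degree $0\le j\le 2(n-1)$, with an additional $\z$ summand at the middle degree $j=n-1$ exactly when $n$ is odd, and zero in odd degrees.

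Outside the four exceptional slots $i\in\{0,1,n,n+1\}$ both neighboring $H^*(S^n)$ terms vanish, so the sequence collapses to $H^i(\bX)\cong H^{i-2}(D)$, yielding a $\z$ in each even degree and zero in odd degrees. Connectedness gives $H^0(\bX)=\z$ and $H^1(\bX)=0$. When $n$ is even, $H^{n-1}(D)=0$ automatically forces $\delta=0$, and the block at degree $n$ becomes a split short exact sequence $0\to\z\to H^n(\bX)\to\z\to 0$, producing $H^n(\bX)=\z\oplus\z$ and $H^{n+1}(\bX)=0$, matching the claim.

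The main obstacle is the case $n$ odd, where $H^{n-1}(D)=\z\oplus\z$ and the claimed answer forces $\delta:\z\to\z\oplus\z$ to be injective with cokernel $\z$, so that $H^n(\bX)=0$ and $H^{n+1}(\bX)=\z$. To identify $\delta$ I would argue geometrically: a generator of $H^n(S^n)\cong H^n(TM)$ is Poincar\'e dual to a fiber of $TM\to M$, whose closure in $\bX$ meets $D$ transversely in the two ideal points at $\tau=\pm\infty$ of each compactified complexified geodesic $C_\gamma\cong\c\p^1$ through that base point. The antiholomorphic involution $N_{-1}$ of Lemma~\ref{nantiholo} interchanges these two ideal points and, in the round model, corresponds to the automorphism of $D\cong\q^{n-1}$ swapping the two rulings that generate $H^{n-1}(D)=\z\oplus\z$. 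This symmetry forces $\delta(1)$ to lie on the diagonal $\z\subset\z\oplus\z$, while the transversality of the fiber with $D$ pins $\delta(1)$ down as the primitive diagonal class, giving the required injectivity and cokernel $\z$.
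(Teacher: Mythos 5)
Your approach is genuinely different from the paper's. The paper realizes $\bX$ as diffeomorphic to a symplectic cut $W$ of $TM$ at $\{E=\tfrac12\}$, observes that $E$ is a Morse--Bott function on $W$ whose only critical sets are the Lagrangian $M$ and the reduced space $D$, and then quotes Audin's results on polarized symplectic manifolds wholesale. You instead take the Gysin sequence for the pair $(\bX, D)$, using $\bX - D = TM \simeq S^n$ and the known cohomology of $D$. Both routes are legitimate in outline, and your route is more elementary in that it avoids the symplectic machinery. However, it shifts the burden onto two inputs: knowing $H^*(D;\z)$ (you cite Weinstein/Besse, which is fine, though it should be given a precise reference), and computing the connecting map $\delta: H^n(TM) \to H^{n-1}(D)$ when $n$ is odd.

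The computation of $\delta$ is where your argument has a genuine gap. Your geometric picture is incorrect: the closure in $\bX$ of a fiber $T_pM \cong \r^n$ does not meet $D$ in ``two ideal points,'' but rather in the entire $(n-1)$-sphere $\Sigma_p := \{[\gamma_v] : v \in S_pM\} \subset D$ of oriented geodesics emanating from $p$. (The picture you describe is for a single complexified geodesic $C_\gamma \cong \c\p^1$, which is $2$-real-dimensional; but for $n \geq 2$ the fiber $T_pM$ is swept out by all the rays through $0$, each meeting a distinct point of $D$ at infinity.) So $\delta$ of the generator is Poincar\'e dual in $D$ to the class $[\Sigma_p] \in H_{n-1}(D)$, which is not obviously primitive, and identifying it requires real work. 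The symmetry argument via $N_{-1}$, even if $N_{-1}^*$ does interchange the two generators of $H^{n-1}(D)$ (which you assert but do not prove), would only constrain $\delta(1)$ to lie on a diagonal $\z \subset \z\oplus\z$ and does not by itself exclude $\delta = 0$ or a non-primitive multiple; ``transversality pins it down'' is not an argument.

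The gap is fixable without the geometric identification of $\Sigma_p$. Since $\bX$ is compact K\"ahler (Proposition \ref{kahlerity}), Hodge symmetry forces $H^n(\bX;\c)$ to have even dimension when $n$ is odd, so $\operatorname{rank} H^n(\bX;\z)$ is even; in your block $0 \to H^n(\bX) \to \z \xrightarrow{\delta} \z^2 \to H^{n+1}(\bX) \to 0$ this forces $\delta \neq 0$, hence $\delta$ injective and $H^n(\bX) = 0$. Then Poincar\'e duality plus universal coefficients (using $H^{n-1}(\bX) \cong H^{n-3}(D) \cong \z$ and $H^n(\bX) = 0$, hence torsion-free) give $H^{n+1}(\bX;\z) \cong \operatorname{Hom}(H_{n+1}(\bX),\z) \cong \z$, forcing $\delta(1)$ primitive. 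With this replacement the Gysin route goes through. As written, however, the $n$ odd case is not established.
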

\begin{proof}
c.f. \cite{audin}. By the construction, the symplectic cut $W$ at $\{E = \frac{1}{2}\}$ of $TM$ in the Liouville symplectic form on $TM$ is diffeomorphic to $X$. Then $M\subset W$ is Lagrangian and the symplectic reduction at $E = \frac12$ is a symplectic (real) codimension-2 submanifold, with $E$ a Morse-Bott function with only $M$ and $D$ as critical sets. This means $W$ is a polarized symplectic manifold. By \cite{audin}, it has the above cohomology groups. Since $\bX$ is diffeomorphic to $W$, $\bX$ has the above cohomology groups.
\end{proof}
\begin{rmk}
In \cite{audin}, the above cohomology groups are generated by the Poincar\'e dual of $[D]^i$ over $\mathbb{Q}$, more precisely, over $\frac{1}{2}\z$, if they are isomorphic to $\z$. If $n$ is even, $H^n(\bX,\mathbb{Q})$ is generated by the Poincar\'e dual of $[D]^{n/2}$ and $[M]$ , with $[D]^n = 2$, $[D]\cdot[M] = 0$ and  $[M]^2 = -2$. (c.f. ~\cite{audin})
\end{rmk}
\begin{lemma}
Let $C\cong \c\p^1$ be a compactified complexified geodesic and $\mathcal{N}$ be the (holomorphic) normal bundle of $C$ in $\bX$. Then $\det \mathcal{N} \cong \o_{\c\p^1} (2n-2)$, where $n$ is the complex dimension of $\bX$.
\end{lemma}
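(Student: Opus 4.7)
The plan is to reduce via adjunction to computing an intersection number, and then to evaluate that number using an explicit holomorphic section built from parallel vector fields. Since $C\cong\c\p^1$ has $K_C\cong\o(-2)$, the adjunction formula $K_C\cong(K_{\bar X}\otimes\det\mathcal N)|_C$ shows that the claim is equivalent to $\deg K_{\bar X}^{-1}|_C=2n$.

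To construct a holomorphic section whose degree is $\deg K_{\bar X}^{-1}|_C$, I choose an orthonormal frame $\{w_j\}_{j=1}^n$ of $T_{\gamma(0)}M$ with $w_n=\dot\gamma(0)$ and take the corresponding parallel vector fields $\xi_1,\ldots,\xi_n$ along $\mathfrak{C}_\gamma$ of Section \ref{sec:acs}. Their $(1,0)$-parts $\xi_j^{1,0}$ are holomorphic sections of $T^{1,0}\bar X$ along the leaf, and the Cauchy-integral argument in the proof of Lemma \ref{extendj} guarantees that each $\xi_j^{1,0}$ extends holomorphically across the two compactification points $p_\infty^\pm\in C\cap N^+$, hence over all of $C$. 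The wedge
\[
\Psi := \xi_1^{1,0}\wedge\cdots\wedge\xi_n^{1,0}
\]
is then a global holomorphic section of $\det T^{1,0}\bar X|_C = K_{\bar X}^{-1}|_C$, and it suffices to prove $\deg\Psi=2n$.

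I next analyze the divisor of $\Psi$. Near $p_\infty^\pm$ the extension argument of Lemma \ref{extendj} identifies the restrictions of $\xi_1^{1,0},\ldots,\xi_{n-1}^{1,0}$ with a basis of $T^{1,0}_{p_\infty^\pm}N^+$, while $\xi_n^{1,0}$ is the geodesic-flow direction $\d_u$, tangent to $C$ and transverse to $N^+$; hence $\Psi(p_\infty^\pm)\neq 0$. At an interior point of $C$ (inside $TM-0_M$), since $Y_n=\dot\gamma$ is nowhere zero, $\Psi$ vanishes exactly where the normal Jacobi fields $Y_1(\sigma),\ldots,Y_{n-1}(\sigma)$ become pointwise linearly dependent in $T_{\gamma(\sigma)}M$, to the order of that dependence.

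The step I expect to be the main technical difficulty is summing these multiplicities in general. The cleanest route bypasses a metric-dependent calculation of conjugate-point orders by invoking the topology already assembled: Proposition \ref{audinaudin} gives $H^2(\bar X;\z)=\z$ generated by $[D]=[N^+]$, and each compactified complexified geodesic meets $N^+$ in exactly two points, so $D\cdot C=2$ and $c_1(\bar X)=k[D]$ for some positive integer $k$. Using the symplectic-cut description of $\bar X$ from the same proposition together with a Morse-Bott index computation for the moment map $E$ on $TM$ (whose critical sets are $M$ and $N^+$), one identifies $k=n$, whence $\deg\Psi=c_1(\bar X)\cdot C=2n$. Adjunction then yields $\det\mathcal N\cong\o_{\c\p^1}(2n-2)$.
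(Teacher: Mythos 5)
Your reduction via adjunction to the claim $\deg K_{\bX}^{-1}|_C = 2n$ is sound, and your construction of $\Psi = \xi_1^{1,0}\wedge\cdots\wedge\xi_n^{1,0}$ as a nowhere-vanishing-at-infinity holomorphic section of $K_{\bX}^{-1}|_C$ is a legitimate variant of the paper's construction (which instead wedges only the $n-1$ \emph{normal} parallel fields to get a section of $\det\mathcal{N}$ directly). However, the step where you ``bypass a metric-dependent calculation of conjugate-point orders'' is precisely where the argument breaks. You invoke $H^2(\bX;\z)=\z\langle[D]\rangle$, write $c_1(\bX)=k[D]$, and assert that ``a Morse-Bott index computation'' gives $k=n$. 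That computation is not supplied, and it is not routine: the cohomology ring of $\bX$ from Proposition \ref{audinaudin}, together with the Morse-Bott indices of $E$ at $M$ (index $0$, coindex $n$) and at $D$ (index $2$), is purely topological data that does not by itself pin down the first Chern class of the \emph{complex} structure. Moreover, $M$ is not a fixed-point component for the $S^1$-action (the moment map $\sqrt{2E}$ is singular on the zero section), so the weight/localization data that would determine $c_1$ equivariantly is not available there. Finally, note that for $n=2$ Proposition \ref{audinaudin} gives $H^2(\bX;\z)=\z\oplus\z$, generated by $[D]$ and $[M]$, so $c_1(\bX)$ is not \emph{a priori} a multiple of $[D]$; the paper treats this case separately via the Thom-Hirzebruch signature theorem.

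In fact the logical flow in the paper runs in the opposite direction: the identity $K_{\bX}\cong\o(-nD)$ is proved in the \emph{next} lemma, and its proof \emph{uses} the present lemma ($\det\mathcal{N}\cong\o(2n-2)$) together with adjunction and $D\cdot C = 2$. So what your proposal needs is an independent proof of $c_1(\bX)=n[D]$, which would be a genuinely new argument, not a shortcut around the paper's. The paper's own proof instead counts the zeros of $\bigwedge_{j=1}^{n-1}\xi_j^{1,0}$ directly: the zero divisor is the sum of the vanishing loci of the normal Jacobi fields $Y_j$, and on a closed Zoll geodesic of period $2\pi$ each normal Jacobi field with $Y_j(0)=e_j$, $Y_j'(0)=0$ vanishes at exactly two points counted with multiplicity, giving $2(n-1)$. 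If you want to keep your adjunction route, you must either prove $c_1(\bX)=n[D]$ by a self-contained equivariant or index-theoretic argument (including the $n=2$ case) or fall back on this Jacobi-field count; as written, the proposal leaves a hole at the decisive step.
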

\begin{proof}
Using previous notation, we have parallel vector fields  $\{\xi_j\}_{1\leq j\leq n-1}$ along $C = C_{\gamma}$. Then $\xi_j^{1,0}$ is a holomorphic section of $T^{1,0}(TM)|_{\mathfrak{C}_{\gamma}}$ (\cite{ls} Prop 5.1). By continuity in our extension, it is a holomorphic section of $T^{1,0}\bX|_{\mathcal{C}_{\gamma}}$. Since these parallel fields are not in $TC$, $\xi_j^{1,0}$ define holomorphic sections of $\mathcal{N}$. Now $\xi_j^{1,0}$ are linearly independent over $\c$ except at a discrete set of points in $M$, in which the $\xi_j^{1,0}$ vanish. This shows that $\bigwedge \xi_j^{1,0}$ defines a holomorphic section of $\det \mathcal{N} = \bigwedge^{n-1} \mathcal{N}$. $\bigwedge \xi_j^{1,0}$ has the vanishing order the sum of the vanishing order of $\xi_j^{1,0}$ because they are linearly independent over $\c$ except where one of them vanishes. But $\xi_j$ are given by Jacobi fields along $M$ and each Jacobi field vanishes at 2 points of multiplicity 1 (or 1 point with multiplicity 2). This shows that $\bigwedge \xi_j^{1,0}$ has vanishing order of $2n-2$ and therefore $\det \mathcal{N} = \o_{\c\p^1} (2n-2)$.
\end{proof}
\begin{lemma}
$K_{\bX} \cong \o(-nD)$
\end{lemma}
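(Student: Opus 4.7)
The plan is to establish $K_{\bar X}\cong\o(-nD)$ by first proving the corresponding identity at the level of first Chern classes in $H^2(\bar X,\z)$, then invoking the exponential sheaf sequence to upgrade to an isomorphism of holomorphic line bundles. For the upgrade, I observe that $\bar X$ is K\"ahler by Proposition \ref{kahlerity}, while Proposition \ref{audinaudin} gives $H^1(\bar X,\z)=0$; hence $H^1(\bar X,\c)=0$, and Hodge decomposition forces $H^1(\bar X,\o)=H^{0,1}=0$. The exponential sequence then shows $c_1:\mathrm{Pic}(\bar X)\hookrightarrow H^2(\bar X,\z)$ is injective, and since $H^2(\bar X,\z)$ is torsion-free, it suffices to check that $c_1(K_{\bar X})+n[D]=0$ after pairing with a $\mathbb Q$-basis of $H_2(\bar X,\mathbb Q)$.

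The main intersection input is adjunction along $C\cong\c\p^1$: combined with $K_C\cong\o_{\c\p^1}(-2)$ and the previous lemma's $\det\mathcal N_{C/\bar X}\cong\o_{\c\p^1}(2n-2)$, this yields $K_{\bar X}\cdot C=-2n$. The compactification $\bar X$ adds to each punctured-disk leaf two points of $D$ at which $C$ meets $D$ transversally; indeed, the coordinate $w$ in the proof of Lemma \ref{extendj} is a defining function of $D$ that vanishes to first order along the leaf, giving $D\cdot C=2$. Therefore $(K_{\bar X}+nD)\cdot C=0$.

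For $n\neq 2$, Proposition \ref{audinaudin} gives $H_2(\bar X,\mathbb Q)\cong\mathbb Q$, and $[C]$ generates rationally because $[C]\cdot[D]=2\neq 0$, so the argument is complete. The main obstacle is the case $n=2$, where $H_2(\bar X,\mathbb Q)$ has rank two. Here I would additionally compute $(K_{\bar X}+nD)\cdot[M]$: since $M$ is totally real of maximal real dimension, there is a canonical isomorphism $T\bar X|_M\cong TM\otimes_{\r}\c$ of complex vector bundles, so $K_{\bar X}|_M$ is the complexification of a real line bundle and carries a real structure. This forces $c_1(K_{\bar X}|_M)$ to be $2$-torsion in $H^2(M,\z)$, hence zero since $M\cong S^2$ has torsion-free $H^2$. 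Combined with $D\cdot[M]=0$ from the remark after Proposition \ref{audinaudin}, this gives $(K_{\bar X}+nD)\cdot[M]=0$. Finally, $[C]$ and $[M]$ pair against $[D]$ with values $2$ and $0$, so they are linearly independent and therefore span the rank-two group $H_2(\bar X,\mathbb Q)$, completing the proof.
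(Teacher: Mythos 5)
Your proof is correct, and for the $n=2$ case it takes a genuinely different route from the paper. For $n > 2$ your argument is essentially the paper's: adjunction along $C$ gives $K_{\bX}\cdot C = -2n$, $D\cdot C = 2$, and the rank-one $H^2(\bX,\z)$ generated by $[D]$ forces $K_{\bX}\cong\o(-nD)$. (You phrase this dually through $H_2$, but it is the same computation.) For $n = 2$ the paper computes $c_1(\bX)\cdot[D]=4$ by restriction to $D$, then computes the Euler characteristic $\chi=4$ and signature $\tau=0$ of $\bX$ from Proposition~\ref{audinaudin} and invokes the Thom--Hirzebruch signature theorem to get $c_1^2=8$, which pins down the $[M]$-coefficient. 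You replace this entirely by the observation that $M\cong S^2\subset\bX$ is totally real of maximal dimension, so $T\bX|_M \cong TM\otimes_\r\c$ and hence $K_{\bX}|_M$ carries a real structure, forcing $c_1(K_{\bX}|_M)$ to be $2$-torsion, hence zero in $H^2(S^2,\z)=\z$. This is a more conceptual way to obtain the missing pairing $K_{\bX}\cdot[M]=0$ and avoids the signature theorem. You are also more careful than the paper about the passage from a $c_1$-identity to an isomorphism of holomorphic line bundles: the paper tacitly uses $\mathrm{Pic}(\bX)\hookrightarrow H^2(\bX,\z)$, while you justify it via $H^1(\bX,\z)=0$, Hodge theory, and the exponential sequence.

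One small gap: to conclude $\{[C],[M]\}$ is linearly independent in $H_2(\bX,\mathbb{Q})$ you correctly kill the $[C]$-coefficient by pairing with $[D]$, but you should also note that $[M]\neq 0$ (which follows from $[M]^2=-2$, as recorded in the remark after Proposition~\ref{audinaudin}) before deducing the $[M]$-coefficient vanishes. This is trivial to add but should be stated.
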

\begin{proof}
\emph{Case $n>2$}: By the adjunction formula we have $K_C = K_{\bX}|_C + \det \mathcal{N}$. By the above lemma we have $K_{\bX}|_C \cong \o(-2n)$. Since we know that $c_1(K_{\bX})$ defines a class in $H^2 (\bX; \z)$ and $h^2(\bX)=1$ and it is generated by $[D]$, we must have $K_{\bX} \cong \o(rD)$ for some $r\in \z$. Since $D$ intersects $C$ transversely at 2 points, we have $\o(D)|_C = \o(2)$. This shows that $\o(-2n) \cong K_{\bX}|_C \cong \o(rD)|_C = \o(2r)$ and so $r=-n$.

\emph{Case $n=2$}: We also have 
\[
\begin{array}{rcl}
c_1(\bX)|_D &=& c_1(TD\oplus N_{D|\bX})\\
 &=& c_1(D) + c_1(N_{D|\bX}) \\
 &=& c_1(\c\p^1) + [D]\cdot [D] \\
 &=& 2+2 = 4\in H^2(D,\z).
 \end{array}
\]
This shows that $c_1(\bX) \cdot [D] = 4$. Since $[D]$ and $[M]$ are generators of $H^2(\bX,\mathbb{Q})$, we can write $c_1(\bX) = \alpha [D] + \beta [M]$, with $\alpha,\beta \in \mathbb{Q}$. From $c_1(\bX) \cdot [D] = 4$, we see that $\alpha = 2$. By Proposition ~\ref{audinaudin}, the Euler characteristic of $\bX$ is 4. Thus by the Thom-Hirzebruch Signature Theorem, we have 
\[
c_1^2(\bX)=2\chi + 3\tau = 2(4) + 3(0) = 8.
\]
Thus we have $\beta = 0$. This shows that $c_1(\bX) = 2[D]$ or, equivalently, $K_{\bX} \cong \o(-2D)$.
\end{proof}
Using Kachi and Koll\'ar's argument ~\cite{kk} or Kobayashi-Ochiai ~\cite{ko}, $\bX$ is biholomorphic to the nonsingular quadric $\q^n$ by looking at the map $\bX\rightarrow \p H^0(\bX,\o_{\bX} (D))^*$.  
\begin{proof}[Proof of Theorem ~\ref{thm:mainzollthm}]
For $s\in H^0(\bX,\o_{\bX}(D))$, define $Ns:=\overline{N_{-1}^*s}$. Then $N$ is a conjugate linear involution from $H^0(\bX,\o_{\bX}(D))$ to itself. The set of fixed points of $N$ is a maximal totally real subspace $H^0(\bX,\o_{\bX}(D))_\r$ of $H^0(\bX,\o_{\bX}(D))$.   
We can choose $\tilde{s}_0,\ldots \tilde{s}_{n+1}\in H^0(\bX,\o_{\bX}(D))_\r$ which span $H^0(\bX,\o_{\bX}(D))$ over $\c$. We can let $s_0$ be the defining section of $D$ (since $D$ is $N$-invariant). Thus we have $f: x \mapsto [\tilde{s}_0(x):\ldots :\tilde{s}_{n+1}(x)]$ and $f$ commutes with the standard conjugation in $\c\p^{n+1}$. This shows that $N_{-1}$ extends to the standard conjugation $\tau : [z_i]\mapsto[\bar{z}_i]$ in $\c\p^{n+1}$.

In the $[z_i]$ coordinates, $f(\bX) = \q = \{ z^tAz = 0\} \subset \c\p^{n+1}$, where $A$ is a real symmetric matrix, up to a multiplicative constant. This means $A$ can be diagonalized by real symmetric matrix. Perform a real linear change of coordinates, we can assume $\q = \{ \pm z_1^2 \pm \ldots \pm z_{n+1}^2 = z_0^2 \}$. The fixed point set consists of all the real $z$'s. Since the fixed point of $\tau$ in $\q$ is an $n$-sphere, all the signs are positive and thus $\q = \{ \sum_{i=1}^{n+1} z_i^2 = z_0^2\}$. To conclude, there is a linear biholomorphism $G: \c\p^{n+1} \to \c\p^{n+1}$ such that $G(f(\bX)) = \q^n$ and such that it maps $M$, the fixed point of $N_{-1}$, to the fixed point of $\tau$, and the divisor $f(D)$ to $\q^{n-1}$.

Now both $u_1:=\sqrt{E}$ and $u_0$ are solutions to the HCMA equation on the affine quadric $\q_{aff} := \q^n - f(D)\subset \c^{n+1}$, where $u_0$ corresponds to the HCMA solution in the standard case of a CROSS \cite{pw}, i.e., the round metric of sectional curvature +1. We have $u_0 = \log |z|^2 + O(1)$ as $z\rightarrow\infty$ for the standard case, and by our construction, $u_1 = \log |z|^2 + O(1)$ as $z\rightarrow\infty$. Here $z$ is the coordinates of $\c^{n+1}$. Since $u_0$ and $u_1$ are both Siciak-Zaharjuta extremal functions of $S^n$ in $\q_{aff}^n$, $u_0 = u_1$. To see this directly, consider $u_1 - u_0$ restricted to a leaf of the foliation $\f_0$ associated to $u_0$ in $\q_{aff} - S^n$. It is subharmonic, and because it is $\equiv 0$ on $S^n$ and $O(1)$ at infinity, it is $\leq 0$ everywhere on that leaf. Since the leaves of $\f_0$ cover all of $\q_{aff} - S^n$, we see that $u_1 - u_0 \leq 0$ on $\q_{aff}$. But we can interchange the roles of $u_0$ and $u_1$, and conclude $u_0 \equiv u_1$. This means that the K\"ahler forms on the affine quadric given by $dd^cu_0^2, \, dd^cu_1^2$ are identical. Using the fact (\cite{ls}) that the metric $g_i$ on $M$ is the restriction of the K\"ahler metric on $\q_{aff}$ with K\"ahler form $\ddc u_i^2, i = 1,2,$ we see that $g_0$ and $g_1$ are identical on $M = S^n$.
\end{proof}
\begin{rmk}
Most of the procedures above can be applied to all the Zoll manifolds with entire tubes. Indeed, one can use the same argument to show that any Zoll manifold with an entire tube can be embedded into a projective complex manifold, with a smooth ample divisor at infinity which can be identified with the space of oriented geodesics on $M$. It is known that the only metrics on $\r\p^n$ which are Zoll are the CROSSes. It is conjectured, similarly, that the only Zoll metrics on the projective spaces over $\c$, $\h$ and $\mathbb{O}$ are the CROSSes \cite{tsukamoto}. 
\end{rmk}
%
%
%
%
%
%
%
%
%
%
%
%
\section{Algebraization: sufficiency}\label{sec:algsuff}
One could ask whether the extended complex normal Jacobi fields are crucial in the proof  ~\ref{extendj} that the complex structure extends to the points at infinity. We will show in this section that this is not so, and begin by localizing at infinity Stoll's notion \cite{stoll77} of a strictly parabolic exhaustion and recalling some of his notation.

Let $X$ be a complex Stein manifold of complex dimension $n$ with a strictly plurisubharmonic exhaustion function $\tau > 0$. Then we can define a K\"ahler metric $\Omega$, or ``$\tau$-metric'', with K\"ahler form equal to:
\[
\frac{i}{2}\d\dbar\tau = dd^c\tau > 0.
\]
In local coordinates, 
\[
\Omega = \sum_{i,j}\tau_{i\bj}dz^i \wedge d\bar{z}^j,
\]
where $\tau_i = \frac{\d\tau}{\d z^i}$ and $\tau_{\bi} = \frac{\d\tau}{\d \bar{z}^i}$. 

Define $\tau^{i\bj}$ by the relation
\[
\sum_j \tau^{i\bj}\tau_{k\bj} = \delta^i_k.
\]
We consider the complex gradient vector field of type $(1,0)$:
\begin{equation}{\label{eqn:Y}}
	Y:= \sum_{i,j}\tau^{i\bj}\tau_{\bj}\frac{\d}{\d z^i}.
\end{equation}
Let 
\begin{equation}
	\label{eqn:xieta}
		\xi = \re Y \and \eta = \im Y, 
\end{equation}
with flow $\psi$ and $\phi$ respectively. Let $u = \log \tau,$ and assume that there exists a $\tau_0 < +\infty$ such that whenever $\tau \geq \tau_0$,
\begin{enumerate}
\item  $u$ is plurisubharmonic and satisfies the HCMA equation (\ref{eqn:hcma}).
\vs
\item $\eta$ has a periodic flow $\phi$ and there is a free action of $S^1$ on $X_0:= X \cap \{ \tau\geq \tau_0\}$ corresponding to this flow. 
\end{enumerate}
Now since $\tau = e^u$, similar to the previous section, we have 
\[
\tau_{i\bj} = e^u(u_{i\bj} + u_iu_{\bj}).
\]
Using \cite{bk}, there is a foliation of $X_0$ with leaves of one complex dimension and $dd^c u(v,-) = 0$ for all $v$ tangential to each leaf. We call this the Monge-Amp\`ere foliation. We denote a leaf of the foliation by $C$.

Notice that by Stoll \cite{stoll80}, $Y$ is tangent to the foliation in the sense that $Y\in T^{1,0}C$. This means that $\xi$ and $\eta$ are tangent to the leaves and they commute with each other. We also have that 
\[
Y\,\tau = \tau.
\]
Thus, we have
\begin{equation}
	\label{eqn:rtheta}
		\xi\,\tau = \tau, \mbox{ and } \eta\,\tau = 0.
\end{equation}
This means that $\eta$ preserves $\tau$ and thus $u$.
\begin{lemma}
Recall that $\psi$ and $\phi$ are the flows of $\xi$ and $\eta$ respectively, and recall that $\Omega = dd^c \tau >0$. We have $\psi_t^*\Omega = e^t\Omega$ and $\phi_t^*\Omega = \Omega$.
\end{lemma}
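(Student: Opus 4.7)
The plan is to apply Cartan's formula $\mathcal{L}_V\Omega = d(\iota_V\Omega) + \iota_V(d\Omega)$ to both $V=\xi$ and $V=\eta$. Because $\Omega = \ddc\tau$ is $d$-closed, only the term $d(\iota_V\Omega)$ survives, so everything reduces to computing the interior products $\iota_\xi\Omega$ and $\iota_\eta\Omega$. The flow identities then follow by integrating the first-order linear ODE $\frac{d}{dt}(\psi_t^*\Omega) = \psi_t^*(\mathcal{L}_\xi\Omega)$ and its analogue for $\phi_t$, with initial value $\Omega$ at $t=0$.

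The key step is the identification $\iota_Y\Omega = \tfrac{i}{2}\dbar\tau$. This is a one-line local-coordinate computation: contracting $\Omega = \tfrac{i}{2}\tau_{k\bar{l}}\,dz^k\wedge d\bar z^l$ with the $(1,0)$ vector field $Y$ from (\ref{eqn:Y}) annihilates the $dz^k$ factor, and the defining relation $\sum_k \tau_{k\bar l}\tau^{k\bj} = \delta^{\bj}_{\bar l}$ for the inverse Hermitian matrix collapses what remains to $\tfrac{i}{2}\tau_{\bar l}\,d\bar z^l = \tfrac{i}{2}\dbar\tau$. Complex conjugation gives $\iota_{\bar Y}\Omega = -\tfrac{i}{2}\d\tau$, and taking real and imaginary parts of $Y = \xi + i\eta$ yields
\[
\iota_\xi\Omega \;=\; \tfrac{i}{4}(\dbar - \d)\tau \;=\; d^c\tau, \qquad \iota_\eta\Omega \;=\; \tfrac{1}{4}\,d\tau,
\]
with signs and factors matched to the paper's convention $\ddc = \tfrac{i}{2}\d\dbar$.

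Combining these formulas with $d^2 = 0$ and the closedness of $\Omega$, Cartan's formula produces $\mathcal{L}_\xi\Omega = d(d^c\tau) = \ddc\tau = \Omega$ and $\mathcal{L}_\eta\Omega = \tfrac{1}{4}d(d\tau) = 0$. Integrating the resulting ODEs yields $\psi_t^*\Omega = e^t\Omega$ and $\phi_t^*\Omega = \Omega$, as desired. No serious obstacle is expected; the proof is essentially formal once $\iota_Y\Omega$ is identified. The only point requiring care is the bookkeeping of the factors of $i$ and $2$, so that $\iota_\xi\Omega$ comes out exactly equal to $d^c\tau$ and not to some scalar multiple of it; a quick sanity check on the flat model $X = \c^n$, $\tau = |z|^2$, where $\xi$ is half the Euler vector field and $\eta$ is half an infinitesimal rotation, confirms the normalization.
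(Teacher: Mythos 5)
Your proposal is correct and follows the same route as the paper: the paper's proof likewise reduces to the claims $\mathcal{L}_\xi\Omega = \Omega$ and $\mathcal{L}_\eta\Omega = 0$ followed by integration of the flow ODEs, except that the paper dismisses the Lie derivative computations as ``simple calculations,'' whereas you fill them in explicitly via Cartan's formula and the identity $\iota_Y\Omega = \tfrac{i}{2}\dbar\tau$. The normalizations check out with the paper's convention $dd^c\tau = \tfrac{i}{2}\d\dbar\tau$.
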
 
\begin{proof}
By simple calculations, we have $\mathcal{L}_\xi \Omega = \Omega$ and $\mathcal{L}_\eta \Omega = 0$. Thus we have
\[
\frac{d}{dt} \phi_t^* \Omega = \phi_t^* \mathcal{L}_\eta \Omega =0,
\]
and
\[
\frac{d}{dt} \psi_t^* \Omega = \psi_t^* \mathcal{L}_\xi \Omega = \psi_t^*\Omega.
\]
Integrating, we get $\phi_t^*\Omega = \Omega$ and $\psi_t^*\Omega =e^t \Omega$. 
\end{proof}
Consider $w(t) = \tau(\psi_t(x))$. Then we have
\[
\begin{array}{rcl}
w(T) - w(0) &=& \tau(\psi_T(x)) - \tau(x) = \int_0^T \frac{d}{dt} \tau(\psi_t(x)) dt\\
&	&	\\
&=&\int_0^T d\tau_{\psi_t(x)} (\xi) dt\\ 
&	&	\\
&=&\int_0^T \xi_{\psi_t(x)} (\tau) dt\\
&	&	\\
&=&\int_0^T \tau (\psi_t(x)) dt\\
&	&	\\
&=&\int_0^T w(t) dt.\\
&	&	  
\end{array}
\]
Differentiating $w(T)$ with respect to $T$ gives a differential equation 
\[
w'(T) = w(T).
\]
Thus $w(T) = w(0)e^T$, which means 
\[
\psi_t^*\tau = \tau\circ\psi_t=e^t\tau.
\]
This means that $\psi_t^* u = t+u$.

Following Stoll's idea, we have the following:
\begin{lemma}
Let $C$ be a leaf of the foliation $F$. Then $C\cap \{ \tau > \tau_0\}\subset X_0$ is biholomorphic to a punctured holomorphic disk.
\end{lemma}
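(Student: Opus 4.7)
The plan is to uniformize $C\cap\{\tau>\tau_0\}$ via the complex flow of $Y$. First, $Y$ is nowhere zero on $X_0$: since $\xi\tau=\tau>0$ there, $\xi$ (hence $Y$) cannot vanish, and $Y$ is tangent to each leaf of the Monge--Amp\`ere foliation, as cited above. Because $Y$ is a holomorphic $(1,0)$ vector field, the real flows $\psi_t$ and $\phi_s$ of $\xi=\re Y$ and $\eta=\im Y$ commute and together realize the complex flow of $Y$, giving a holomorphic parametrization along each leaf. Under the hypothesis that $\phi$ generates a free $S^1$-action of period $2\pi$, these flows combine to a holomorphic $\mathbb{C}^\ast$-action on $X_0$, where $\mathbb{C}^\ast\cong\mathbb{C}/2\pi i\mathbb{Z}$.

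Fix $p\in C\cap\{\tau=\tau_0\}$ and define
\begin{equation*}
F\colon\{w=t+is:t>0\}/(2\pi i\mathbb{Z})\longrightarrow C\cap\{\tau>\tau_0\},\qquad F(w):=\psi_t\phi_s(p).
\end{equation*}
Well-definedness comes from the $2\pi$-periodicity of $\phi$; holomorphy from the holomorphy of $Y$; and injectivity as follows. If $F(w_1)=F(w_2)$, applying $\tau$ gives $e^{t_1}\tau_0=e^{t_2}\tau_0$, so $t_1=t_2$; the freeness of the $S^1$-action then forces $s_1\equiv s_2\bmod 2\pi$. Since $dF(\partial_w)$ is a nonzero multiple of $Y$, $F$ is an immersion, hence a biholomorphism onto its open image. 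To see this image is the entire connected component of $C\cap\{\tau>\tau_0\}$ containing the $\phi$-orbit of $p$, one checks closedness: a convergent sequence $F(w_n)\to q$ with $\tau(q)>\tau_0$ forces $t_n\to\log(\tau(q)/\tau_0)$ bounded, and, after a subsequence, $s_n\to s_\infty$ in $\mathbb{R}/2\pi\mathbb{Z}$, so continuity yields $F(t_\infty+is_\infty)=q$.

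Finally, $\zeta=e^{-w}$ gives a biholomorphism $\{t>0\}/(2\pi i\mathbb{Z})\xrightarrow{\sim}\{0<|\zeta|<1\}=\Delta^\ast$, the punctured unit disk, completing the proof. The main subtlety is surjectivity: \emph{a priori}, $C\cap\{\tau>\tau_0\}$ could split into several components --- one per boundary circle in $\{\tau=\tau_0\}\cap C$, paralleling the disjoint union of cylinders in Stoll's one-dimensional example from the introduction. The argument above realizes each such component as a punctured disk, so the lemma is to be read componentwise.
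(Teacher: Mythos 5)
Your construction of the biholomorphism via the commuting flows $\psi_t,\phi_s$ and the coordinate $\zeta=e^{-w}$ is essentially the same device the paper uses, and the well-definedness, holomorphy, injectivity and open-and-closed arguments are all sound. But you have not proved the lemma as stated: you explicitly acknowledge that your map only hits one connected component of $C\cap\{\tau>\tau_0\}$ and propose to ``read the lemma componentwise.'' That is a genuine gap, not an optional refinement. The lemma asserts that $C\cap\{\tau>\tau_0\}$ is \emph{a} punctured disk, i.e.\ connected, and the next corollary (compactifying $X$ by adding one point per leaf to build a smooth manifold $\bar X$) depends on precisely that: if a single leaf carried several ends, the filling would not produce a manifold. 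The analogy you draw with Stoll's Riemann surface example is misleading, since there the disjoint union of cylinders occurs in the whole surface near infinity, not inside a single leaf of a foliation.

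The paper closes the gap with an argument you omit. Since $\xi\tau=\tau\neq0$ along $C$, $u|_C$ has no critical points and $\tau_0$ is a regular value, so $\{\tau=\tau_0\}\cap C$ is a compact $1$-manifold, i.e.\ a finite union of circles, and by Morse theory on the (proper, critical-point-free) function $u|_C$ the leaf $C$ is a disjoint union of half-open cylinders, one per boundary circle. A leaf of a foliation is connected by definition, so there is exactly one circle and $C$ is a single cylinder; removing the boundary circle leaves a connected punctured-disk. (Equivalently: $C$ is a connected $2$-manifold with boundary, and the interior of a connected manifold-with-boundary is connected.) You need one of these observations to turn your component-by-component biholomorphism into a proof of the lemma.
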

\begin{proof}
Let $x\in X_0$ be such that $\tau(x) = \tau_0$. Let $C$ be the leaf of the foliation containing $x$. Then $C$ is a one-dimensional complex manifold, and $\tau_0$ is a regular value of $\tau$ since $\xi\tau = \tau\neq 0$. Thus the set $\tau^{-1}(\tau_0)\subset C$ is a one real dimensional manifold. Let $y\in \tau^{-1}(\tau_0)\cap C$ and recall $\phi$ is periodic and non-trivial. Thus the connected components of $\tau^{-1}(\tau_0)\cap C$ are closed and thus they are diffeomorphic to $S^1$. If there are more than one connected components, say it contains more than one copy of $S^1$, by Morse theory, as it has no critical point in $C$, $C$ will be more than one copy of a cylinder. This is a contradiction because $C$ as a (maximal) leaf of a foliation, must be connected. This also shows that $C$ is a diffeomorphic to $S^1 \times \r$, which is a cylinder.
   
It is clear that the flow of $\phi$ is periodic of the same period for each punctured disk. This is because $\phi_s$ commutes with $\psi_t$: Assume $\phi$ has a period of $s_0$ at $x\in X_0$. Then 
\[
\psi_t(x) = \psi_t(\phi_{s_0}(x)) = \phi_{s_0}(\psi_t(x)),
\]
which shows that at $\psi_t(x)$, $\phi$ has the same period by continuity of $t$. Thus there is a diffeomorphism 
\[
\begin{array}{ccccc}
\r / s_0\z&\times& \r_{>0}&\rightarrow& C\\
(s&,&t)&\mapsto&\phi_s\psi_t (x).  
\end{array}
\]
Since $\psi$ and $\phi$ are real and imaginary components of a holomorphic vector field on $C$, by giving the set $\r / s_0\z\times \r_{>0}$ the standard complex structure, i.e. $J\frac{\d}{\d s} = \frac{\d}{\d t}$, the above map is a biholomorphism from $\r / s_0\z\times \r_{>0}$ to $C$. Let $u_0 = u(x)$. Then by composing with the exponential map 
\[
\zeta = e^{-\frac{2\pi}{s_0}(t+u_0 - is)},
\]
we have a map from a holomorphic disk to $C$, where 
\[
|\zeta| = e^{-\frac{2\pi}{s_0}(t+u_0)} = e^{-\frac{2\pi u}{s_0}}.
\]
\end{proof}
Then we can compactify each leaf:
\begin{corollary}
Each leaf can be compactified by filling the hole of each punctured disk. Thus $X$ can be compactified to a real $(2n)$-dimensional manifold $\bar{X}$.
\end{corollary}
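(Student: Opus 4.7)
The plan is to construct smooth charts around each added "puncture point" by exploiting the commuting flows $\psi$ and $\phi$ and a transversal slice.

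First I would set up the transversal. Since the $\phi$-action of $S^1$ on $X_0$ is assumed free and preserves $\tau$, the level set $L := \{\tau = \tau_0\}$ is a smooth closed real $(2n-1)$-manifold carrying a free $S^1$-action, so $N := L/S^1$ is a smooth closed real $(2n-2)$-manifold and its points correspond bijectively to the leaves of the Monge--Amp\`ere foliation meeting $X_0$. For $\bar y \in N$, pick a smooth local section $\sigma : V \to L$ of $L \to N$ over a neighborhood $V$ of $\bar y$ and define
\[
\Phi : (\r/s_0\z) \times \r_{\geq 0} \times V \to X_0, \qquad (s,t,y) \mapsto \phi_s \psi_t(\sigma(y)).
\]
Since $[\xi,\eta]=0$ (both arise from the single holomorphic vector field $Y$ on each leaf) and the previous lemma showed that $(s,t)\mapsto\phi_s\psi_t(\sigma(y))$ is a diffeomorphism from $(\r/s_0\z)\times\r_{>0}$ onto the leaf through $\sigma(y)$, the restriction of $\Phi$ to $t>0$ is a diffeomorphism onto its image.

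Next I would package the leafwise compactification coordinate $\zeta := \exp\!\bigl(-\tfrac{2\pi}{s_0}(t-is)\bigr)$ from the preceding lemma into a local chart: via $\Phi$, $(\zeta,y)$ identifies a punctured neighborhood of $\{0\}\times V$ in $\c\times V$ with an open set of $X_0$. I then extend this identification by declaring $\zeta=0$ over each $y\in V$ to correspond to the added puncture of the leaf through $\sigma(y)$; this yields a bijection from $\{|\zeta|\leq 1\}\times V$ onto an open set $U\subset \bar X$, which I take as a chart. Repeating this construction covers all of $\bar X \setminus X$ by such charts, and combined with the smooth atlas of $X$ itself produces a topology and candidate smooth structure on $\bar X$; $\bar X$ is compact because $N$ is compact and $\{\tau\le\tau_1\}$ is for any $\tau_1$.

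The step requiring the most care is verifying that these charts glue smoothly across $\zeta=0$. If $\sigma'$ is a second section on an overlap, then $\sigma'(y)=\phi_{\alpha(y)}(\sigma(y))$ for a smooth $\alpha:V\cap V'\to\r/s_0\z$, and a direct computation using $\phi_s\psi_t=\psi_t\phi_s$ shows the transition is
\[
\zeta'=e^{2\pi i\,\alpha(y)/s_0}\,\zeta,
\]
a smooth $y$-dependent unitary rotation in the $\zeta$-plane that is clearly smooth on all of $\{|\zeta|\leq 1\}\times(V\cap V')$, in particular across $\zeta=0$. The global constancy of the period $s_0$ that I used, which is the only obstruction one might worry about, is exactly the content of the period computation $\phi_{s_0}\psi_t=\psi_t\phi_{s_0}$ in the previous lemma combined with connectedness of $N$. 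Hence $\bar X$ inherits the structure of a smooth compact real $(2n)$-manifold extending that of $X$.
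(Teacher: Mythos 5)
Your proposal is correct and follows essentially the same strategy as the paper's (much terser) proof: use the constant period $s_0$ to promote the leafwise compactifying coordinate $\zeta$ to a smooth transverse coordinate on a disk bundle over a slice, then check that the resulting charts glue smoothly. You supply the details the paper omits — the local sections of $L \to N = L/S^1$, the explicit transition rotation $\zeta' = e^{2\pi i \alpha(y)/s_0}\zeta$ — but the underlying idea and the role played by constancy of $s_0$ are the same.

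One small remark: the global constancy of the period $s_0$ does not really need the connectedness argument you gesture at; it is built into hypothesis (2) of the section, which assumes the flow $\phi$ is the flow of a \emph{free $S^1$-action} on all of $X_0$, forcing the period to be the same for every orbit. The previous lemma's computation $\psi_t\phi_{s_0} = \phi_{s_0}\psi_t$ is what shows the period is constant along each individual leaf, which is then consistent with this global assumption.
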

\begin{proof}
We have that each leaf can be compactified by filling the hole of each punctured disk. Now in the neighboring leaf, $\zeta$ can be defined as a smooth coordinate around $C$ which restricts to holomorphic coordinates in each leaf. This can be done because the period $s_0$ is constant. Thus a neighborhood of $C$ is given by $\D^* \times \r^{2n-2}$. Thus $\bar{X}$ is a smooth manifold.
\end{proof}
Following Lempert and Sz\H{o}ke \cite{ls}, we want to describe the complex structure $J$ as we did in the previous section. But first, we define the notion of parallel vector fields. 

Pick a point $x\in X_0$ such that $\tau(x) = \tau_0$ and $C$ be the leaf containing $x$. Let $\tilde{v}\in T_x X_0$. Recall $\psi$ denotes the flow of $\xi$ and $\phi$ denotes the flow of $\eta$. Since $\phi$ and $\psi$ commute and $\phi$ is periodic along the leaf $C$, we have a well-defined vector field $v$ along $C$ such that it is invariant under $\phi$ and $\psi$. 

Let $V_x = \ker du \cap \ker d^c u$. Then $V_x$ is a $J$-invariant subspace and $T_xX_0 = V_x \oplus T_xC$. 
Let $\xi$ be a parallel vector field along $C$ such that $\xi_x \in V_x$ at a point $x$. Then by the relations $\psi_t^* u = t+u$ and $\phi_s^* u = u$, we have $\xi_y \in V_y$ for all $y\in C$. 
\begin{lemma}
Any parallel vector fields can be extended to the compactification $\bar{X}_0 = X_0 \cup \{ \bar{X} - X \}$.
\end{lemma}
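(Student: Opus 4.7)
The plan is to work in local coordinates near a point $p\in\bar X_0\setminus X_0$ supplied by the preceding corollary, reinterpret the flow-invariance condition as a $\c^*$-equivariance, and then solve the resulting functional equations algebraically.

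Fix $p\in\bar X_0\setminus X_0$ on a compactified leaf $\bar C$ and choose a chart $(\zeta,y_1,\dots,y_{2n-2})\colon U\to\D\times\r^{2n-2}$ around $p$ with $p\mapsto(0,0)$, $\zeta$ restricting to the holomorphic disk coordinate on each leaf meeting $U$, and the slices $\{y=\mathrm{const}\}$ coinciding with those leaves. From the formula $\zeta=e^{-(2\pi/s_0)(u-u_0+is)}$ given in the preceding lemma (using a basepoint on the transversal $\{u=u_0\}$) together with the fact that $y$ is constant along leaves, the commuting flows $\psi_t$ and $\phi_s$ combine into $(\psi_t\phi_s)(\zeta,y)=(\lambda\zeta,y)$, where $\lambda=e^{-2\pi(t-is)/s_0}$ sweeps through all of $\c^*$.

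Given a parallel vector field $v$ along $C=\{y=0\}$, expand its restriction in the coordinate frame:
\begin{equation*}
v|_{(\zeta,0)}=a(\zeta)\,\partial_\zeta+\overline{a(\zeta)}\,\partial_{\bar\zeta}+\sum_{k=1}^{2n-2}b_k(\zeta)\,\partial_{y_k}.
\end{equation*}
The pushforwards $(\psi_t\phi_s)_*\partial_\zeta=\lambda\,\partial_\zeta$ and $(\psi_t\phi_s)_*\partial_{y_k}=\partial_{y_k}$ convert the invariance condition $(\psi_t\phi_s)_*v=v$ into the system
\begin{equation*}
a(\lambda\zeta)=\lambda\,a(\zeta),\qquad b_k(\lambda\zeta)=b_k(\zeta),\qquad\lambda\in\c^*.
\end{equation*}
These force $a(\zeta)=c\,\zeta$ for a single $c\in\c$ and each $b_k$ to be a real constant. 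Hence
\begin{equation*}
v|_{(\zeta,0)}=c\,\zeta\,\partial_\zeta+\bar c\,\bar\zeta\,\partial_{\bar\zeta}+\sum_k b_k\,\partial_{y_k},
\end{equation*}
a polynomial in $(\zeta,\bar\zeta)$ which extends smoothly across $\zeta=0$, with value $\sum_k b_k\,\partial_{y_k}$ at $p$.

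The delicate step is ensuring that the chart can be arranged so that $\psi_t\phi_s$ acts in the simultaneously diagonal form $(\zeta,y)\mapsto(\lambda\zeta,y)$ with the \emph{same} $\lambda$ on every nearby leaf; this is where the constancy of the period $s_0$ (already used in the preceding corollary) and the fact that $\{u=u_0\}$ is a smooth section of the foliation cut out near $C$ enter essentially. Once this normal form is in place, the $\c^*$-equivariance of $v$ simultaneously plays the role of ``holomorphicity along the leaf'' and ``removable singularity at $\zeta=0$'' that appeared separately in Lemma~\ref{extendj}, and the smooth extension is immediate.
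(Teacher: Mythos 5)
Your proof is correct, but it takes a genuinely different route from the paper's. The paper defines the extension directly as a limit: it observes that the parallel field at $\psi_t\phi_s(x)$ is $(\psi_t\phi_s\gamma)'(0)$ for a curve $\gamma$ with $\gamma'(0)=\tilde v$, then simply declares the value at the point at infinity to be $\lim_{t\to+\infty}(\psi_t\gamma)'(0)$, noting well-definedness from $\lim_{t\to+\infty}\psi_t=\lim_{t\to+\infty}\psi_t\phi_s$. You instead work in the chart $(\zeta,y)$ furnished by the preceding corollary, translate flow-invariance into the $\c^*$-equivariance relations $a(\lambda\zeta)=\lambda a(\zeta)$ and $b_k(\lambda\zeta)=b_k(\zeta)$, and solve them to find that $v$ is literally polynomial in $(\zeta,\bar\zeta)$ with constant coefficients in the frame $\{\partial_\zeta,\partial_{\bar\zeta},\partial_{y_k}\}$. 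What your version buys: the limit's existence, its value $\sum_k b_k\partial_{y_k}$, \emph{and} the smoothness of the extension across $\zeta=0$ are all transparent from the explicit formula, whereas the paper's one-line limit argument leaves smoothness implicit (it is needed for the later extension of $J$). What the paper's version buys: brevity, and no need to set up the normal form $(\zeta,y)\mapsto(\lambda\zeta,y)$. The ``delicate step'' you flag --- arranging the chart so that $\psi_t\phi_s$ acts diagonally with $\lambda$ independent of $y$ --- is indeed the crux, but it does follow from what the paper has already established: the period $s_0$ is constant across leaves, the flows preserve the foliation (so fix $y$), and $\zeta=e^{-(2\pi/s_0)(u-u_0-is)}$ is built so that $\psi_{t'}$ shifts $u$ by $t'$ and $\phi_{s'}$ shifts $s$ by $s'$ uniformly in $y$, once one fixes a smooth transversal $\{u=u_0\}$ to serve as basepoints. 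So there is no gap, only a step that deserves the explicit justification you gesture at.
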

\begin{proof}
Let $\gamma(r)$ be a curve in $X_0$ such that $\gamma(0)=x$ and $\gamma'(0) = \tilde{v}$. Extend $\tilde{v}$ to a parallel vector field $v$ along the leaf. Then $v$ at the corresponding point would be defined as $(\psi_t\phi_s\gamma)'(0)$. To extend the parallel vector field, one defines it to be $\displaystyle\lim_{t\rightarrow +\infty} (\psi_t\gamma)'(0)$. The limit exists in the compactification $\bar{X}_0$ and is well-defined because $\displaystyle\lim_{t\rightarrow+\infty} \psi_t =  \displaystyle\lim_{t\rightarrow+\infty} \psi_t\phi_s$ for any $s$.
\end{proof}
We would like to make use of some results in \cite{ls} now:
\begin{lemma}
$\Omega(v,w) = 0$ for $v\in V_x$ and $w\in T_x C$. Hence $\Omega = dd^c\tau$ restricts to a symplectic form on $V_x$.
\end{lemma}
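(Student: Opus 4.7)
The plan is to reduce the statement for $\Omega=dd^c\tau$ to one for $dd^c u$ via the substitution $\tau=e^u$, then use two inputs: the defining properties of $V_x$ (namely $du=d^cu=0$ on it) and the defining property of the Monge--Amp\`ere foliation (namely that $dd^c u$ annihilates leaf tangents in one of its arguments). This will reduce the first assertion to an immediate computation and will make the second assertion a consequence of non-degeneracy of $\Omega$ together with the splitting $T_xX_0=V_x\oplus T_xC$.

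More concretely, the first step is to compute
\[
\Omega \;=\; dd^c\tau \;=\; d(\tau\,d^c u) \;=\; d\tau\wedge d^c u + \tau\, dd^c u \;=\; \tau\bigl(du\wedge d^c u + dd^c u\bigr),
\]
using $\tau=e^u$, and hence $d\tau=\tau\,du$ and $d^c\tau=\tau\,d^c u$. Now evaluate this 2-form on a pair $(v,w)$ with $v\in V_x$ and $w\in T_xC$: the wedge term $(du\wedge d^c u)(v,w)$ vanishes identically because $du(v)=d^c u(v)=0$ by definition of $V_x$, leaving $\Omega(v,w)=\tau(x)\,dd^c u(v,w)$. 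Since $u$ satisfies the HCMA equation, the associated Monge--Amp\`ere foliation (invoked earlier from \cite{bk}, \cite{stoll80}) is characterized by the vanishing of $dd^c u(w,-)$ for every $w$ tangent to a leaf; antisymmetry then gives $dd^c u(v,w)=-dd^c u(w,v)=0$. Thus $\Omega(v,w)=0$, as claimed.

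For the second assertion, observe that $\Omega$ is a symplectic (in particular non-degenerate) form on all of $T_xX_0$ because $\tau$ is strictly plurisubharmonic. Combined with the splitting $T_xX_0=V_x\oplus T_xC$ (stated just above the lemma) and the vanishing of $\Omega$ on $V_x\times T_xC$ just established, a routine linear algebra argument forces $\Omega|_{V_x}$ to be non-degenerate: any vector in the kernel of $\Omega|_{V_x}$ would, by the $V_x$-vs-$T_xC$ vanishing, lie in the radical of $\Omega$ on $T_xX_0$, which is trivial. Since $V_x$ is $J$-invariant and $\Omega$ is of type $(1,1)$, the restriction is automatically closed and of even rank, hence genuinely symplectic on $V_x$.

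The main (and only real) obstacle is ensuring the clean passage $\Omega=\tau(du\wedge d^cu+dd^cu)$ is used in concert with the correct characterization of the leaf: one must remember that the Monge--Amp\`ere foliation is defined for $u=\log\tau$ (not for $\tau$ itself), so the HCMA hypothesis on $u$ enters precisely at the step $dd^c u(v,w)=0$. Everything else is formal.
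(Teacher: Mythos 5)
Your proof is correct and takes essentially the same approach as the paper: expand $\Omega = dd^c\tau = \tau(du\wedge d^c u + dd^c u)$ and observe that both terms vanish on $V_x\times T_xC$, the first because $du=d^cu=0$ on $V_x$ and the second because $dd^cu$ annihilates leaf tangents. You have also spelled out the non-degeneracy of $\Omega|_{V_x}$, which the paper leaves implicit, and that argument is correct.
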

\begin{proof}
Since $\tau = e^u$, we have 
\[
\dbar \d \tau = e^u (\d\dbar u + \dbar u \wedge \d u).
\]
Since $\d\dbar u = 0$ along $C$ and $V_x = \ker du\cap \ker d^c u$, we have $\Omega (v,w) = 0$.
\end{proof}
Since $V_x$ is $J$-invariant, symplectic with a symplectic form $\Omega$, and $\Omega$ is a $(1,1)$-form defined by a strictly plurisubharmonic function, we have $\Omega$ restricted to $V_x$ tamed and compatible with $J$. Thus there exists an orthonormal basis with respect to the metric $\Omega(\cdot,J\cdot)$
\[
\{\tilde\xi_1, J\tilde\xi_1,\ldots, \tilde\xi_{n-1}, J\tilde\xi_{n-1}\}
\]
such that $\Omega(\tilde\xi_i, \tilde\xi_j) = 0$ and $\Omega (\tilde\xi_i, J\tilde\xi_j) = \delta_{ij}$. Then $\{ \tilde\xi^{1,0}_i\}_{1\leq i\leq n-1}$ span $T^{1,0}V_x$. Letting $\tilde\xi_n = \xi$, we have $\{ \tilde\xi^{1,0}_i\}_{1\leq i\leq n-1}\cup \{ Y = \xi^{1,0}\}$ spans $T_x^{1,0} X_0$. 

Using Lempert-Sz\H{o}ke's construction, extend $\tilde\xi_i$ to parallel vector fields $\xi_i$ along $C$. Choose $n$ more vectors $\tilde\eta_i = J\tilde\xi_i$. Extend them to parallel vector fields $\eta_i$ along $C$. Notice that since $J$ is not invariant under $\phi_s$ and $\psi_t$, $\eta_i \neq J\xi_i$ in general. Thus following \cite{ls} again, we have
\begin{lemma}
The vector fields $\{\xi_j, J\xi_k\}$ are pointwise linearly independent over $\r$ on all of $C$. (Here $J\xi_k$ are not parallel.)
\end{lemma}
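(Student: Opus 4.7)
My plan follows the Lempert--Sz\H{o}ke template from \S 5 of \cite{ls}, adapted to the Monge-Amp\`ere setting here. The key leverage is that in our context the leaf $C \cap \{\tau > \tau_0\} \cong \D^*$ contains no analogue of the real locus that in LS forces poles in the transfer matrix $a_{jk}$.

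First I would split the claim using the $J$-invariant decomposition $T_yX = T_yC \oplus V_y$. For $j = n$, $\xi_n = \xi$ and $J\xi_n = -\eta$ are $\r$-independent in $T_yC$, and since both $T_yC$ and $V_y$ are $J$-invariant this piece decouples. The content of the lemma is therefore that the $2(n-1)$ vectors $\{\xi_j(y), J\xi_k(y)\}_{j,k \leq n-1}$ form an $\r$-basis of $V_y$, equivalently that $\{\xi_j(y)\}_{j \leq n-1}$ are $\c$-linearly independent in the complex vector space $(V_y, J)$. Next I would check that each $\xi_j^{1,0} := \frac{1}{2}(\xi_j - iJ\xi_j)$ is a holomorphic section of $T^{1,0}X|_C$: parallelism gives $[\xi,\xi_j] = [\eta,\xi_j] = 0$ and thus $[\bar Y, \xi_j] = 0$, while integrability of $J$ places $[\bar Y, \xi_j^{0,1}]$ in $T^{0,1}X$, forcing $[\bar Y, \xi_j^{1,0}] \in T^{0,1}X$ as well, which is precisely the holomorphicity condition.

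Expanding $J\xi_k = \sum_l (A_{kl}\xi_l + B_{kl}\eta_l)$ in the parallel frame, a direct calculation using $\psi_t^*\Omega = e^t\Omega$, $\phi_s^*\Omega = \Omega$, and the base values $\Omega_x(\tilde\xi_j,\tilde\eta_k) = \delta_{jk}$, $\Omega_x(\tilde\xi_j,\tilde\xi_k) = \Omega_x(\tilde\eta_j,\tilde\eta_k) = 0$ yields $\Omega_y(\xi_j, J\xi_k) = e^t B_{kj}$ and $\Omega_y(\xi_j,\xi_k) = \Omega_y(\eta_j,\eta_k) = 0$ along $C$. Consequently the Gram matrix of $\{\xi_j\}$ in the K\"ahler Hermitian form $h = \Omega(\cdot, J\cdot) + i\Omega$ is the real symmetric matrix $e^t B(y)$, and the desired $\c$-linear independence is equivalent to positive-definiteness of $B(y)$, with $B(x) = I$ by construction.

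Globalizing this positivity over all of $C$ is the main obstacle. Following LS I would introduce a complex transfer matrix $a_{jk}(y)$ encoding the change of frame between the two holomorphic families $\{\xi_j^{1,0}\}$ and $\{\eta_l^{1,0}\}$ along $C$, identifying $\im(a_{jk})$ with $B^{-1}$, so that $a_{jk}$ is meromorphic on $C \cong \D^*$. In LS the potential poles of $a_{jk}$ are confined to the real locus $\r \subset \mathfrak{C}_\gamma$; in our setting $C$ contains no such real locus, so I expect $a_{jk}$ to be holomorphic on all of $\D^*$, and by the K\"ahler-positivity argument as in LS Theorem 5.6, $\im(a_{jk}) > 0$ throughout. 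The delicate point will be ruling out isolated zeros of $\det(\im a_{jk})$ on $\D^*$, which I expect to close by combining holomorphy of $\det a_{jk}$ with the positivity of $h$ on $V^{1,0}_y$ --- precisely where the absence of a real locus in $C$ is essential.
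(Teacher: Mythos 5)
Your first two paragraphs identify the right structures: the $J$-invariant splitting $T_yX = T_yC \oplus V_y$, the reduction to $\c$-linear independence of $\{\xi_j\}_{j\leq n-1}$ in $(V_y, J)$, and the observation that once $\Omega(\xi_j,\xi_k)\equiv 0$ on $C$ (which follows from flow-invariance, as you compute) the real matrix $(\Omega(\xi_j,J\xi_k))$ is the Gram matrix of $\{\xi_j\}$ in the positive Hermitian form $h = \Omega(\cdot,J\cdot)+i\Omega$. This much agrees with the paper. The holomorphy of $\xi_j^{1,0}$ that you verify is correct but is not used in this lemma; it becomes relevant only afterwards.

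The genuine gap is in your third paragraph. Introducing the transfer matrix $a_{jk}$ is circular: $a_{jk}$ expresses $\eta_k^{1,0}$ in the frame $\{\xi_j^{1,0}\}$, hence is only defined on the open set where $\{\xi_j^{1,0}\}$ already spans $V_y^{1,0}$ — which is precisely what the lemma asserts. You cannot invoke meromorphy or holomorphy of $a_{jk}$ on all of $C$, nor "rule out isolated zeros of $\det(\im a_{jk})$," before establishing the $\c$-linear independence; and in any case $\det(\im a_{jk})$ is not holomorphic, so the complex-analytic removal of its potential zeros has no purchase. The "absence of a real locus" in $C$ does not by itself confer holomorphy of $a_{jk}$. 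What the paper actually does — deferring to the proof of Proposition 6.4 of Lempert--Sz\H{o}ke — is a short positivity argument that bypasses $a_{jk}$ entirely. Suppose $\sum_j a_j\xi_j(y) + \sum_k b_kJ\xi_k(y)=0$ for reals $a_j,b_k$; set $\alpha = \sum a_j\xi_j$, $\beta=\sum b_k\xi_k$, so $\alpha(y)=-J\beta(y)$. Then, using $\Omega(J\cdot,J\cdot)=\Omega(\cdot,\cdot)$ (type $(1,1)$) and $\Omega(\alpha,\beta)=0$ (bilinear combination of the $\Omega(\xi_i,\xi_j)\equiv 0$),
\[
0=\Omega_y\bigl(\alpha+J\beta,\,J(\alpha+J\beta)\bigr)
 =\Omega_y(\alpha,J\alpha)+\Omega_y(\beta,J\beta),
\]
and both summands are nonnegative since $\Omega=dd^c\tau>0$, so $\alpha(y)=\beta(y)=0$. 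Finally, the $\{\xi_j\}$ are $\r$-linearly independent at every $y\in C$ because they are the pushforwards of a basis at $x$ by the diffeomorphisms $\psi_t\circ\phi_s$; hence $a_j=b_k=0$. This one-step argument is the missing closure in your proposal.
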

\begin{proof}
Since $\Omega(\tilde\xi_i, \tilde\xi_j)=0, i \neq j,$ and $\phi_s^*\Omega = \Omega$ and $\psi_t^*\Omega = e^t \Omega$, we have $\Omega(\xi_i, \xi_j) = 0, i \neq j,$ on all of $C$. Since $\Omega$ is of type $(1,1)$, we have $\Omega(J\xi_i, J\xi_j)=0$ on all of $C$. The rest follows from the proof of Proposition 6.4 of \cite{ls}.
\end{proof}
Since the $\{\xi_j,J\xi_k\}$ are pointwise linearly independent over $\r$ on all of $C$, we have $\{\xi_j^{1,0}\}$ pointwise linearly independent over $\c$ on $C$. Let
\[
\eta_k^{1,0} = \sum_j a_{jk}\xi^{1,0}_j\quad\mbox{ on } C,
\]
for a matrix of holomorphic functions $(a_{ij})$.  
Taking the real part, we have
\[
\eta_k = \sum_j (\re a_{jk}) \xi_j + \sum_j (\im a_{jk}) J\xi_j.
\]
Since $\{ \xi_j, \eta_k\}$ are pointwise linearly independent (because the $\psi_t$ and $\phi_s$ are diffeomorphisms onto their images), $(\im a_{jk})$ is invertible.
Let $(e_{jk}) = (\im a_{jk})^{-1}$. Then we have
\[
J\xi_h = \sum_k e_{kh} \left( \eta_k - \sum_j (\re a_{jk}) \xi_j\right).
\]
If we can show that the holomorphic functions $a_{jk}$ can be extended to the compactifications near $\bar{X}_0 - X_0$ and their imaginary parts are invertible, we can extend $J$ to $\bar{X}_0$ and hence $\bar{X}$ as an almost complex structure. (Here $J$ will be smooth, by the Cauchy integral formula, similar to the proof of Lemma ~\ref{extendj} in the above section.) Since $D := \bar{X} - X$ is of measure zero, by continuity, the Nijenhuis tensor will also vanish at $D$ and therefore $J$ will be integrable everywhere. Lastly, $D$ will be $J$ invariant by the above formula, which in turn implies that $D$ will be a complex submanifold of $\bX$.

To show that $a_{jk}$ can be extended, we will show
\begin{lemma}
$(e_{ij})$ is a symmetric matrix on all of $C$. Thus $(\im a_{ij})$ is symmetric on all of $C$.
\end{lemma}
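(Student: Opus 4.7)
The plan is to derive the symmetry of $(\im a_{jk})$ (equivalently of $(e_{jk})$) as a direct consequence of the K\"ahler identity $g(X,Y)=\Omega(X,JY)$, combined with the transformation laws of $\Omega$ under the flows $\psi_t$ and $\phi_s$ that are already in hand. The scheme mirrors the reasoning used in Section 2 for the adapted complex structure, but now the geometric input comes from the $\tau$-metric instead of the kinetic energy.

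The first step is to compute $\Omega(\xi_i,\eta_j)$ at an arbitrary point of the leaf $C$. At the base point $x$, the $\tilde\xi_i$ together with $\tilde\eta_i=J\tilde\xi_i$ were chosen to be an orthonormal basis of $V_x$ with respect to $\Omega(\cdot,J\cdot)$, so
\[
\Omega(\tilde\xi_i,\tilde\eta_j)=\Omega(\tilde\xi_i,J\tilde\xi_j)=\delta_{ij}.
\]
Since $\xi_i$ and $\eta_j$ are parallel (invariant under $\psi_t$ and $\phi_s$), and since $\phi_s^*\Omega=\Omega$ while $\psi_t^*\Omega=e^t\Omega$, evaluating at the point $\psi_t\phi_s(x)\in C$ gives
\[
\Omega(\xi_i,\eta_j)=e^t\,\delta_{ij}=e^{u-u_0}\,\delta_{ij}\quad\text{on } C,
\]
where $u_0=u(x)$.

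The second step is the observation that the matrix $B_{ik}:=\Omega(\xi_i,J\xi_k)$ is automatically symmetric. Indeed, $\Omega=dd^c\tau$ is a K\"ahler form, so $B_{ik}=g(\xi_i,\xi_k)$, where $g$ is the associated (symmetric) K\"ahler metric on $X_0$.

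Finally, expanding $\Omega(\xi_i,\eta_j)$ using the decomposition
\[
\eta_j=\sum_k(\re a_{kj})\xi_k+\sum_k(\im a_{kj})J\xi_k
\]
and the preceding lemma's vanishing $\Omega(\xi_i,\xi_k)=0$ on $C$, one obtains
\[
\sum_k B_{ik}\,(\im a_{kj})=e^{u-u_0}\,\delta_{ij},
\]
i.e.\ the matrix identity $B\cdot(\im a)=e^{u-u_0}\,I$ on $C$. Since $B$ is symmetric, and the identity forces it to be invertible, we get $\im a=e^{u-u_0}B^{-1}$, which is symmetric; equivalently $(e_{jk})=(\im a_{jk})^{-1}=e^{-(u-u_0)}B$ is symmetric. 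There is no substantial obstacle: the only essential input is the K\"ahler symmetry of $g$ (automatic because $\Omega$ is a $(1,1)$-form) together with the scaling identity $\Omega(\xi_i,\eta_j)=e^{u-u_0}\delta_{ij}$, which is just parallel transport of the normalization at the base point along the commuting flows $\psi_t,\phi_s$.
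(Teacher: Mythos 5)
Your proof is correct and is essentially the paper's own argument, just written in matrix form and with a dual bookkeeping. The paper expands $J\xi_h=\sum_k e_{kh}(\eta_k-\sum_j(\re a_{jk})\xi_j)$ to get $\Omega(\xi_i,J\xi_h)=e_{ih}e^t$ and then invokes the $(1,1)$-symmetry $\Omega(\xi_i,J\xi_h)=\Omega(\xi_h,J\xi_i)$; you expand $\eta_j=\sum_k(\re a_{kj})\xi_k+\sum_k(\im a_{kj})J\xi_k$ to get $B\cdot(\im a)=e^{u-u_0}I$ and invoke the symmetry of the K\"ahler metric $B_{ik}=g(\xi_i,\xi_k)$ --- the same underlying fact, just phrased via the metric rather than the $(1,1)$ condition on $\Omega$.
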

\begin{proof}
We have
\[
\Omega(\xi_i, J\xi_h)=\Omega(\xi_i, e_{kh} (\eta_k - (\re a_{jk}) \xi_j))=\Omega(\xi_i, e_{kh}\eta_k)=e_{kh}\Omega(\xi_i, \eta_k).
\]
At $x'\in X_0$ such that $x' = \psi_t \phi_s(x)$, $\Omega_{x'} = e^t \Omega_x$. Thus at $x'$, we have
\[
\Omega_{x'}(\xi_i, J\xi_h) = e_{kh}\Omega_{x'}(\xi_i, \eta_k) = e_{kh}e^t \Omega_x (\xi_i, \eta_k) = e_{kh}e^t \delta_{ik} = e_{ih} e^t.
\]
But $\Omega$ is of type (1,1) and therefore 
\[
\Omega (\xi_i, J\xi_h) = \Omega(\xi_h, J\xi_i),
\]
Thus we have $e_{ih} = e_{hi}$ and therefore $\im a_{jk} = \im a_{kj}$.
\end{proof}
\begin{corollary}
$(\im a_{jk})$ is symmetric and positive definite on all of $C$. 
\end{corollary}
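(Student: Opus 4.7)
The plan is to observe that the preceding lemma already gives symmetry, so only positive definiteness remains. The strategy is to test $(e_{jk}) = (\im a_{jk})^{-1}$ against an arbitrary nonzero vector $v = (c_1,\ldots,c_{n-1}) \in \r^{n-1}$ by evaluating the K\"ahler metric $\Omega(\cdot, J\cdot)$ on the parallel vector field $\xi := \sum_i c_i\,\xi_i$ along $C$.

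First I would observe that $\xi$ is nowhere vanishing on $C$: the parallel fields $\xi_j$ are pointwise $\r$-linearly independent on $C$, as a special case of the pointwise independence of $\{\xi_j, J\xi_k\}$ proved in the preceding lemma. Since $\tau$ is strictly plurisubharmonic, $\Omega = dd^c\tau$ is a positive $(1,1)$-form, so $\Omega(\cdot, J\cdot)$ is a Riemannian metric; in particular $\Omega(\xi, J\xi) > 0$ at every point of $C$.

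Next I would expand $\Omega(\xi, J\xi)$ using the same mechanism that yielded symmetry. From $\Omega(\xi_i, \xi_j) = 0$ together with $J\xi_h = \sum_k e_{kh}\bigl(\eta_k - \sum_j(\re a_{jk})\xi_j\bigr)$ one obtains $\Omega(\xi_i, J\xi_h) = \sum_k e_{kh}\,\Omega(\xi_i, \eta_k)$. At the base point $x$, the orthonormality of $\{\tilde\xi_i, J\tilde\xi_k\}$ gives $\Omega(\xi_i, \eta_k) = \delta_{ik}$; at a general point $x' = \psi_t\phi_s(x)$, the identities $\psi_t^*\Omega = e^t\Omega$ and $\phi_s^*\Omega = \Omega$, combined with the parallel invariance of $\xi_i$ and $\eta_k$, upgrade this to $\Omega_{x'}(\xi_i, \eta_k) = e^t\delta_{ik}$. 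Summing over $i, h$ yields $\Omega_{x'}(\xi, J\xi) = e^t\, v^T E(x') v$, where $E = (e_{ih})$. Comparison with the previous paragraph gives $v^T E(x') v > 0$ for every nonzero $v$ and every $x' \in C$, so $E$, and therefore $(\im a_{jk}) = E^{-1}$, is positive definite throughout $C$.

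I do not foresee any real obstacle. The argument is a mild repackaging of the scaling and invariance identities for $\Omega$ already used to prove symmetry, together with the elementary fact that a parallel vector field cannot vanish once it is nonzero at one point, since $\psi_t$ and $\phi_s$ act by diffeomorphisms on $C$. This is presumably why the result is stated as a corollary rather than as a separate lemma.
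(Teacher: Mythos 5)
Your proof is correct, but it follows a genuinely different route from the paper's. The paper argues by continuity on the connected leaf: it has already established that $(\im a_{jk})$ is symmetric (the preceding lemma) and everywhere invertible on $C$, observes that the normalization $\tilde\eta_i = J\tilde\xi_i$ forces $(\im a_{jk}) = \mathrm{Id}$ at the base point $x$, and concludes that the signature of this continuous family of symmetric invertible matrices cannot jump on the connected set $C$, so it stays positive definite. Your argument is instead a pointwise positivity computation: you identify $v^T E(x') v$ (with $E = (\im a_{jk})^{-1}$) as $e^{-t}\,\Omega_{x'}(\xi, J\xi)$ for the nowhere-vanishing parallel field $\xi = \sum c_i \xi_i$, and this is positive because $\Omega = dd^c\tau$ is a K\"ahler form. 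Both are sound. Yours avoids the topological appeal to connectedness and the ``$\mathrm{Id}$ at $x$'' anchor, at the cost of reusing the scaling computation from the symmetry lemma; the paper's is a one-liner that leaves to the reader the standard fact that a continuous symmetric invertible family with one positive-definite value on a connected domain is everywhere positive definite. One small attribution slip: the pointwise $\r$-linear independence of $\{\xi_j, J\xi_k\}$ is not the ``preceding lemma'' (that one is the symmetry statement) but the one before it, though your alternative justification via $\psi_t, \phi_s$ being diffeomorphisms suffices on its own.
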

\begin{proof}
Since $(\im a_{jk}) = Id$ at $x$, and the matrix is invertible on all of $C$, it is positive definite on all of $C$. 
\end{proof}
\begin{corollary}
$J$ can be extended to a complex structure on $\bar{X}_0$.
\end{corollary}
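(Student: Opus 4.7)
The plan is to mimic closely the argument of Lemma \ref{extendj}, using the technical groundwork already laid out (the symmetry and positive definiteness of $(\im a_{jk})$ along $C$, and the extendability of the parallel vector fields $\xi_j,\eta_j$ to $\bar X_0$). The content of the corollary is that, at each added point $p\in D := \bar X - X$, the explicit formula
\[
J\xi_h \;=\; \sum_k e_{kh}\!\left(\eta_k - \sum_j (\re a_{jk})\,\xi_j\right)
\]
defines $J$ in a way that extends smoothly and integrably across $p$, where $(e_{jk})=(\im a_{jk})^{-1}$.

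First I would show that each $a_{jk}$ extends holomorphically across the puncture of the compactified leaf $\bar C$. The diagonal entries $a_{ii}$ are holomorphic on the punctured disk with $\im a_{ii}>0$; hence they omit a half-plane in $\c$, so cannot have an essential singularity at the puncture (by, e.g., Little Picard, or directly because $\infty$ has no upper-half-plane neighborhood), and they extend holomorphically. For the off-diagonals, the $2\times2$ positive definiteness of the principal minors of $(\im a_{jk})$ gives $(\im a_{ij})^2 < (\im a_{ii})(\im a_{jj})$, so $\im a_{ij}$ is locally bounded on the punctured disk; hence $a_{ij}$ itself is bounded in imaginary part, misses arbitrarily large half-planes near the puncture, and again extends holomorphically. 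This promotes $(a_{jk})$ to a holomorphic matrix-valued function on all of $\bar C$.

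Next I would verify that $(\im a_{jk})$ remains invertible at the added point. Fix $v\in\c^{n-1}\setminus\{0\}$; then $v^t(\im a_{jk})v$ is harmonic on $\bar C$ (as the imaginary part of a holomorphic function), is strictly positive on the punctured disk by hypothesis, and hence by the minimum principle for harmonic functions (a harmonic function positive on a deleted neighborhood is positive at the center) is strictly positive at the puncture as well. So $(\im a_{jk})$ is positive definite on all of $\bar C$, and the matrix $(e_{jk})$ of the formula above extends smoothly along the compactified leaf. Plugging into the displayed formula, $J$ extends continuously, indeed smoothly along each leaf, across $D$.

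The remaining task is to check smoothness in the transverse directions and integrability. For smoothness, I would apply the Cauchy integral formula exactly as in the proof of Lemma \ref{extendj}: for a smooth family of leaves parametrized by a transverse parameter in a small neighborhood, the matrix-valued function $(a_{jk})$ depends smoothly on the parameter in the punctured direction, and its extension across the puncture is given by its Cauchy average on small circles, which depends smoothly on the transverse parameter as well. This yields a smooth almost complex structure on all of $\bar X_0$ agreeing with $J$ on $X_0$. For integrability, the Nijenhuis tensor of this extended $J$ vanishes identically on the open dense set $X_0$, and $D$ has measure zero, so by continuity the Nijenhuis tensor vanishes on $\bar X_0$; hence $J$ is integrable. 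The main obstacle is organizational rather than substantive: making sure the smooth-in-transverse-direction statement is valid at $D$, since the punctured-disk coordinate $\zeta$ is not holomorphic off the leaf. This is resolved exactly as in the earlier Kähler-extension argument, by using the smoothness of the family of leaves to pass the smooth dependence through the Cauchy integral. Finally, the formula shows that $TD$ is $J$-invariant, so $D$ is a complex submanifold of $\bar X$, as advertised.
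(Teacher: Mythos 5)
Your argument is correct and is precisely the argument the paper intends: the paper's own proof of this corollary is the one-line statement that it is the same as Lemma~\ref{extendj}, and your proposal faithfully reproduces that argument (holomorphic extension of $a_{jk}$ via the upper-half-plane/principal-minor bounds, positive definiteness at the puncture by the minimum principle for harmonic functions, transverse smoothness via the Cauchy integral, and integrability by density), using the symmetry and positive-definiteness of $(\im a_{jk})$ already established earlier in this section.
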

\begin{proof}
The proof is completely the same as Lemma \ref{extendj}.
\end{proof}
Now we claim that
\begin{prop}
The potential function $\rho := \log (1 + \cosh \frac{4\pi u}{s_0})$ defines a K\"ahler form on $X_0$, which extends to a K\"ahler form on $\bar{X}$.
\end{prop}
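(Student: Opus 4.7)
The plan is to follow the model of Proposition \ref{kahlerity} essentially verbatim, the two arguments being formally parallel: in both cases we have a function $\rho = \log(1+\cosh(\text{constant}\cdot u))$ where $u$ is a solution of the HCMA equation, and in a local coordinate $\zeta$ along each compactified leaf the potential collapses to the Fubini--Study potential on $\mathbb{P}^1$.

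First I would observe that on $X_0$ the form $\omega := -i\,\partial\bar\partial\rho$ is positive: writing $c = 4\pi/s_0$ and computing
\[
dd^c\rho \;=\; \frac{c\sinh cu}{1+\cosh cu}\, dd^c u \;+\; \frac{c^2}{1+\cosh cu}\, du\wedge d^c u,
\]
the first term is semipositive on $V_x = \ker du\cap\ker d^c u$ (where by $dd^c\tau = e^u(dd^c u + du\wedge d^c u)$ it agrees with a positive multiple of $\Omega|_{V_x}$), and the second term contributes positivity in the leaf direction $\mathrm{span}_\mathbb{R}\{\xi,\eta\}$. The two subspaces are $\omega$-orthogonal because $dd^c u$ annihilates leaf directions (HCMA) and $du\wedge d^c u$ annihilates $V_x$.

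Next, to extend $\omega$ across the added divisor $D = \bar X - X$, I would use the leaf coordinate $\zeta$ constructed in the preceding lemmas, which satisfies $|\zeta| = e^{-2\pi u/s_0}$, so that $cu = -\log|\zeta|^2$ and
\[
\rho \;=\; \log(1+\cosh cu) \;=\; \log\frac{(1+|\zeta|^2)^2}{2|\zeta|^2}.
\]
Now fix $p\in D$ and pick a smooth holomorphic coordinate $w$ on $\bar X$ near $p$ with $\{w=0\}$ cutting out $D$ locally and $dw\neq 0$ at $p$. Both $\zeta$ and $w$ are holomorphic along the leaf through $p$ and vanish to first order there, so $\zeta = f\,w$ with $f$ smooth on $\bar X$ near $p$ (smoothness coming from the Cauchy integral formula, exactly as in Lemma \ref{extendj}) and $f(p)\neq 0$. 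Then
\[
\rho \;=\; \log\frac{(1+|fw|^2)^2}{2|f|^2} \;-\; \log|w|^2,
\]
and since $\log|w|^2$ is pluriharmonic, $\omega = -i\,\partial\bar\partial\bigl[\log\tfrac{(1+|fw|^2)^2}{2|f|^2}\bigr]$ extends smoothly across $p$.

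Finally I would verify positivity on $D$. Along the compactified leaf $\bar C$, the coordinate $\zeta$ identifies $\rho$ with the Fubini--Study potential on $\mathbb{P}^1$, so $\omega$ restricts to a positive form at $\zeta = 0$. In the transverse directions, I would test $\omega$ on the extended parallel vector fields $\xi_j,\eta_j$ spanning $V$: using $\psi_t^*\Omega = e^t\Omega$ and the normalization $\Omega_x(\tilde\xi_j,\tilde\eta_k) = \delta_{jk}$ at the base point, one gets $\Omega(\xi_j,\eta_k) = (\tau/\tau_0)\delta_{jk}$ on the orbit, whence
\[
\omega(\xi_j,\eta_k) \;=\; \frac{c\sinh cu}{\tau(1+\cosh cu)}\,\Omega(\xi_j,\eta_k) \;\longrightarrow\; \frac{c}{\tau_0}\,\delta_{jk}
\]
as $u\to\infty$, while $\omega(\xi_j,\xi_k) = \omega(\eta_j,\eta_k) = 0$ by type considerations. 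Orthogonality of $V$ and the leaf direction under $\omega$ was established above, so $\omega$ is non-degenerate on all of $\bar X$. The main obstacle, as in the Zoll case, is ensuring the mixed off-diagonal terms $\omega(\xi_j,\eta_k)$, $j\neq k$, do not survive at the limit; this is handled exactly as in Proposition \ref{kahlerity}, by noting $\Omega(\tilde\xi_j,\tilde\eta_k)=0$ for $j\neq k$ at the base point and propagating along $\psi_t$, $\phi_s$.
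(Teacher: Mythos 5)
Your proof follows the same overall strategy as the paper's, which in turn defers to Proposition~\ref{kahlerity}: along each compactified leaf write $\rho = \log\frac{(1+|\zeta|^2)^2}{2|\zeta|^2}$, factor $\zeta = f\,w$ with $f$ smooth and non-vanishing and $w$ a local holomorphic defining function for $D$, split off the pluriharmonic term $-\log|w|^2$, and then verify positivity of the extended form at $D$ by testing on the extended parallel frame $\{\xi_j,\eta_j\}$ and using $\psi_t^*\Omega = e^t\Omega$. One piece of your argument is a genuine (and nice) simplification: for strict positivity on $X_0$ the paper invokes Lemma~3.1 of Patrizio--Wong to conclude $\rho$ is strictly psh near $D$, whereas you decompose $dd^c\rho = A\,dd^c u + B\,du\wedge d^c u$ with $A,B>0$, note that the first summand is a positive multiple of $\Omega$ on $V_x=\ker du\cap\ker d^c u$, the second contributes the positivity in the leaf direction, and the two summands annihilate the complementary $\omega$-orthogonal subspaces. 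This is more self-contained than the paper's reference.

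There is one genuine omission. The statement asks for a K\"ahler form on all of $\bar X$, and $\bar X$ contains the compact piece $X-X_0$ where $u$ is no longer an HCMA solution and your positivity computation does not apply. The paper devotes a paragraph to this: using the Stein structure of $X$, pick a smooth $\rho_1$ on $X-X_0$ agreeing with $\rho$ on $\{\tau = \tau_0\}$, add a large multiple of a strictly psh function $\rho_2$ vanishing on the boundary so that $\rho_1 + C_1\rho_2$ is strictly psh, and smooth across the seam $\{\tau=\tau_0\}$. You need a sentence like this to actually get a K\"ahler form on all of $\bar X$ (which is subsequently used for the Kodaira embedding step).

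A small wording issue: ``$\omega(\xi_j,\xi_k)=\omega(\eta_j,\eta_k)=0$ by type considerations'' does not stand alone, because in this section the paper explicitly notes that $\eta_j \neq J\xi_j$ away from the base point $x$, since $J$ is not $\psi_t,\phi_s$-invariant. The correct reason is the one you give in your closing sentence — propagate $\Omega_x(\tilde\xi_j,\tilde\xi_k)=\Omega_x(\tilde\eta_j,\tilde\eta_k)=0$ along the flows using $\psi_t^*\Omega=e^t\Omega$, $\phi_s^*\Omega=\Omega$, together with the observation that $\omega|_V$ is a scalar multiple of $\Omega|_V$ (because $du\wedge d^cu$ vanishes on $V$). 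So this is a presentation issue, not a gap.
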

\begin{proof}
In a neighborhood of $D$, we have $\tau$ is strictly plurisubharmonic, and $u=\log \tau$ is plurisubharmonic and it satisfies the HCMA equation. Using the strict plurisubharmonicity of $\tau$, after some simple calculations, $u^2$ is strictly plurisubharmonic near $D$ and so is $\frac{16\pi^2}{s_0^2}u^2$. Thus by Lemma 3.1 from Patrizio-Wong \cite{pw}, we have that $\rho$ is strictly plurisubharmonic near $D$. Since $X$ is a Stein manifold, we can extend $\rho$ to a strictly plurisubharmonic function on all of $X$. To do this, first we pick a smooth function $\rho_1$ on $X-X_0$ such that it agrees with $\rho$ on the boundary $\{ \tau = \tau_0\}$. The set $X-X_0$ is holomorphically convex, thus there exists a strictly plurisubharmonic function $\rho_2$ on $X-X_0$ such that it vanishes at the boundary. Since $\overline{X-X_0}$ is compact, there exists $C_1$ such that $\rho_1 + C_1\rho_2$ is strictly plurisubharmonic on $X-X_0$ such that it agrees on $\rho$ on the boundary. Thus $\rho$ can be extended to a function which is strictly plurisubharmonic on $X$ except on $\{ \tau = \tau_0\}$. Smoothing $\rho$ near the boundary gives a strictly plurisubharmonic function on $X$ such that it agrees with $\rho$ when $\tau$ is large enough. Thus it defines a K\"ahler metric on $X$.

Notice that we have 
\[
\rho = \log (1 + \cosh \frac{4\pi u}{s_0}) = \log (1+\cosh (-\log |\zeta|^2)),
\]
and from the proof of Proposition \ref{kahlerity}, 
\[
\omega := -i\d\dbar\rho
\]
can be extended to a form on $\bar{X}$.

To prove the form is positive definite, the method is exactly the same as in the proof of Proposition \ref{kahlerity}:
\[
\begin{array}{rcl}
\omega:&=&-i\d\dbar \log \left(1+ \cosh \dfrac{4\pi u}{s_0}\right)\\
&	&	\\
&=& -i\left(\dfrac{8\pi i}{s_0 e^u}\dfrac{\sinh \frac{4\pi u}{s_0}}{1+\cosh \frac{4\pi u}{s_0}}\Omega 
+ \dfrac{4\pi}{s_0}\dfrac{\frac{4\pi}{s_0} - \sinh \frac{4\pi u}{s_0}}{1+\cosh \frac{4\pi u}{s_0}}\d u \wedge \dbar u\right).\\
&	&	
\end{array}
\]
For parallel vector fields $\xi_j$ and $\eta_j$ lying in $V_y$ for all $y\in C$, we have
\[
\Omega(\xi_j, \eta_j) = 1
\]
at $x$. Since $\psi_t^*\Omega = e^t \Omega$, and $u = u_0 + t$, we have
\[
\begin{array}{rcl}
\omega(\xi_j, \eta_j) &=& \dfrac{8\pi}{s_0 e^u}\dfrac{\sinh \frac{4\pi u}{s_0}}{1+\cosh \frac{4\pi u}{s_0}}\Omega(\xi_j, \eta_j)\\
	&	&	\\
&=&\dfrac{8\pi}{s_0 \tau_0}\dfrac{\sinh \frac{4\pi u}{s_0}}{1+\cosh \frac{4\pi u}{s_0}}\\
	&	&	\\
&\rightarrow & \dfrac{8\pi}{s_0\tau_0}\neq 0.\\
	&	&
\end{array}
\]
as $u\rightarrow +\infty$. Similarly we have $\Omega(\xi_j, \xi_k) = \Omega(\eta_j, \eta_k) = 0$ and the same holds for $\xi_n$ and $\eta_n$, as in the proof of Proposition ~\ref{kahlerity}. Thus $\omega$ defines a K\"ahler form on $\bar{X}$.
\end{proof}
Similar to Proposition \ref{positivelinebundle} we can prove that $\o(D)$ is a positive line bundle. Thus by Kodaira's embedding theorem, $\bar{X}$ is projective algebraic, and $D$ is ample.

This is a special case of an algebraization question raised by the first author, as mentioned in the introduction:
\begin{ques}
	\label{aq}
Let $X$ be a Stein manifold, $\tau$ a smooth strictly plurisubharmonic exhaustion function on $X$. Assume that there exists $\tau_0>0$ such that on $X_0=\{ \tau\geq \tau_0\}$, $u:=\log \tau$ is plurisubharmonic and satisfies the HCMA equation $(\d\dbar u)^n = 0$. Then is $X$ an affine algebraic variety with coordinate ring $R := f \in \o(X)$ such that
\begin{equation}
	\label{coordring}
		|f|\leq C (1+\tau)^N,\, \text{for constants} \;C = C_f, N = N_f <+\infty?
\end{equation}
\end{ques}
We have proved that 
\begin{theorem}
	\label{thm:algn}
The question \ref{aq} has a positive response if we assume that the action given by the flow of $\phi$ on $X_0$ is freely periodic.
\end{theorem}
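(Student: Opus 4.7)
The plan is short, since essentially all the geometric work has been done above. The preceding discussion constructs the compactification $\bar{X}$, equips it with a K\"ahler form, shows $\mathcal{O}(D)$ is positive, and concludes (just before the theorem statement) via Kodaira's embedding theorem that $\bar{X}$ is a smooth projective variety with $D$ a smooth ample divisor. So Theorem \ref{thm:algn} reduces to two tasks: extracting the affine algebraic structure on $X = \bar{X}\setminus D$, and identifying its coordinate ring with the polynomially bounded holomorphic functions described in (\ref{coordring}).

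For affineness I would simply invoke the classical fact that the complement of a smooth ample effective divisor in a smooth projective variety is affine; this follows from the fact that sections of $\mathcal{O}(kD)$, $k\gg 0$, give a projective embedding of $\bar{X}$ under which $D$ becomes a hyperplane section, so $X$ is a closed subvariety of an affine space.

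The content of the theorem is then the identification of the coordinate ring. The key input is the local asymptotic behavior of $\tau$ near $D$: the lemma showing each leaf is biholomorphic to a punctured disk supplies the explicit identification $|\zeta| = e^{-2\pi u/s_0}$, hence $\tau = e^u = |\zeta|^{-s_0/(2\pi)}$, where $\zeta$ is a local defining coordinate for $D$ transverse to it. Using the smoothness of the compactification, $|\zeta|$ is uniformly comparable to $|s|_h$ in a neighborhood of $D$, where $s$ is a global holomorphic section of $\mathcal{O}(D)$ cutting out $D$ and $h$ is a smooth Hermitian metric. Thus $\tau \asymp |s|_h^{-s_0/(2\pi)}$ near $D$. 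Given $f \in \mathcal{O}(X)$ with $|f|\leq C(1+\tau)^N$, the product $f\cdot s^k$ is bounded near $D$ for any integer $k > Ns_0/(2\pi)$, hence extends holomorphically across $D$ as a section of $\mathcal{O}(kD)$ on $\bar{X}$. By Serre's GAGA this extension is algebraic, so $f$ is a regular function on the affine variety $X$. Conversely, any regular function on $X$ is of the form $\sigma/s^k$ for an algebraic section $\sigma$ of $\mathcal{O}(kD)$; such $\sigma$ is bounded on the compact $\bar{X}$, which combined with $|s|_h^{-1}\asymp \tau^{2\pi/s_0}$ gives the required polynomial bound.

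The main obstacle I anticipate is the uniform comparison $|\zeta|\asymp |s|_h$ near $D$. The coordinate $\zeta$ was assembled leaf-by-leaf from the complex flow parameter, and it is the constancy of the period $s_0$ across leaves (already used in the compactification) that ensures $\zeta$ descends to a smooth transverse coordinate on $\bar{X}$. One must check that this smooth coordinate is in fact smoothly comparable to any global holomorphic defining section of $\mathcal{O}(D)$; this should follow from the smoothness of the compactification already established, but it is the one place where the analytic growth condition (\ref{coordring}) is rigorously matched to the algebraic pole order along $D$.
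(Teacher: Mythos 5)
Your proposal takes essentially the same route as the paper: it reduces the theorem to the identification of the coordinate ring (affineness of $X=\bar X\setminus D$ being standard once $\bar X$ is projective and $D$ ample), matches the growth condition $|f|\leq C(1+\tau)^N$ against the pole order along $D$ via the local transverse coordinate $\zeta$ vanishing on $D$, and invokes GAGA in both directions. Your version is in fact a touch more careful in one place: you keep the period $s_0$ explicit in the comparison $\tau\asymp|s|_h^{-s_0/(2\pi)}$, whereas the paper writes $\tau=1/|\zeta|^2$, implicitly normalizing $s_0=4\pi$, but this does not affect the argument since the growth classes agree.
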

\begin{proof}
The only thing we have to prove here is that the coordinate ring is given by ~\eqref{coordring}. Any algebraic function $f$ on $X$ can be extended to a meromorphic function $g$ on $\bar{X}$. Since $\tau = \frac{1}{|\zeta|^2} = \frac{1}{|fw|^2}$, where $w$ is a holomorphic coordinate near a point on $V$ and $f$ does not vanish at $V$, c.f. the proof of ~\ref{kahlerity}. Thus $g$ is locally bounded by a polynomial of $\tau$. But $\bar{X}$ is compact. Thus $g$ is bounded by a polynomial of $\tau$. 

Conversely, if $g$ is bounded by a polynomial of $\tau$, $g$ is bounded by a polynomial of $w$, a holomoprhic coordinate system near a point on $V$. Thus $g$ is meromorphic and thus it is a rational function \cite{gaga} which is holomorphic when restricted to $X$. Thus $g$ is algebraic on $X$.
\end{proof}
%
%
%
%
%
%
%
%
%
%
%
%
\section{Algebraization: necessity}\label{sec:algnec}
One could also ask the converse of theorem \ref{thm:algn}: if $X$ is a smooth affine variety, does there exist a plurisubharmonic exhaustion function $u$ satisfying the HCMA equation for $u\geq u_0$, such that $\tau := e^u$ is strictly plurisubharmonic outside a compact set and determines the entire functions on $X$ which are algebraic (rational)? Unfortunately, we do not know the answer to this, in general. Note that in Theorem \ref{thm:algn} we can, starting from a p.s.h. solution $u$ of the HCMA equation for which $\tau = e^u$ is strictly p.s.h., find a smooth, projective compactification $\bar{X}$ of $X$ such that the divisor $D := \bar{X} - X$ is smooth and ample. In this restricted situation we can find an exact converse to Theorem \ref{thm:algn}. The statement we prove is local at the divisor $D$.
\begin{theorem}
	\label{thm:converse}
Let $\bar{X}$ be a complex manifold of dimension $n$, and $D \subset \bar{X}$ a compact, smooth divisor with holomorphic normal line bundle $L$ on $D$ which is ample. Then there is a smoothly bounded tubular neighborhood $\v$ of $D$ and a foliation $\f$ of $\v$ by complex disks crossing $D$ transversally, and a solution $u$ on $\v - D$ to the HCMA equation on $\v - D$ for which $u$ is smooth up to the boundary $\partial{\v}$ of $\v$, with $u|_{\partial{\v}} = 0$, and such that for $z_0 \in D$ and in the local parameter $\zeta$ uniformizing the leaf $\triangle_{z_0}$ of $\f$ passing through $z_0$, with $\zeta(z_0) = 0$, we have
			\begin{equation}
				\label{eqn:logatinf}
					u|_{\triangle_{z_0} - \{z_0\}} = \log \frac1{|\zeta|^2}.
			\end{equation}
\end{theorem}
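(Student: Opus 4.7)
The approach is to model a neighborhood of $D$ in $\bar X$ on the total space of the dual normal bundle $L^* = N_{D/\bar X}^*$, where the HCMA solution arises from the natural Hermitian radius function. In the model case, let $Y$ denote the total space of $L^*$ with zero section $D$, fix a Hermitian metric $h$ on $L$ of strictly positive curvature (which exists by ampleness), and set $u_0(v) := -\log|v|_{h^*}^2$ on $Y - D$. In a local holomorphic trivialization $(z,w)$ in which $h^*$ is represented by $e^{-\phi(z)}$, one has $u_0 = -\log|w|^2 + \phi(z)$, so $i\,\partial\bar\partial u_0 = i\,\partial\bar\partial \phi$ is the pullback from $D$ of the curvature of $h$. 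This $(1,1)$-form annihilates the fiber direction $\partial/\partial w$, so $(i\,\partial\bar\partial u_0)^n = 0$ and $u_0$ solves the HCMA on $Y - D$. The Monge-Amp\`ere foliation is exactly the fiber foliation $Y \to D$; on each fiber $u_0 = \log(1/|\zeta|^2)$ for the holomorphic uniformizer $\zeta$ satisfying $|\zeta| = |v|_{h^*}$; and $\v_0 := \{|v|_{h^*} < 1\}$ is a smoothly bounded, strictly pseudoconvex tubular neighborhood of $D$ with $u_0|_{\partial\v_0} = 0$. Hence the theorem holds tautologically in the model.

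For the general $\bar X$, extend $h$ from $L = \mathcal{O}(D)|_D$ to a smooth Hermitian metric on $\mathcal{O}(D) \to \bar X$ whose curvature remains strictly positive in a neighborhood of $D$. Let $\sigma$ be the canonical defining section of $D$, so $\tilde u := -\log|\sigma|_h^2$ is a strictly plurisubharmonic exhaustion of a deleted neighborhood of $D$; for $c_0$ large, $\v := \{\tilde u > c_0\}$ is a smoothly bounded, strictly pseudoconvex tubular neighborhood of $D$. I would then construct $u$ as the upper envelope
\[
u(x) := \sup \{ w(x) : w \text{ p.s.h.\ on } \v,\ w \leq 0 \text{ on } \partial\v,\ w \leq \tilde u + C_w \text{ near } D \},
\]
normalized so that $u - \tilde u$ is bounded near $D$. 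By Bedford-Taylor theory, $u$ is plurisubharmonic and satisfies the HCMA on $\v - D$, with $u|_{\partial\v} = 0$. The Monge-Amp\`ere foliation of $u$ in the sense of Bedford-Kalka \cite{bk} is $\f$: positivity of $i\Theta_h$ near $D$ forces each leaf to cross $D$ transversely in a single point, while strict pseudoconvexity of $\partial\v$ forces each leaf to meet $\partial\v$ transversely in a smooth circle. On each such leaf $u$ is harmonic with vanishing boundary values and a logarithmic singularity at the unique interior intersection with $D$, so $u|_{\triangle_{z_0}} = \log(1/|\zeta|^2)$ for the uniformizer normalized by $\zeta(z_0) = 0$ and $|\zeta| = 1$ on the boundary circle.

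The main obstacle is establishing the required regularity of $u$, both transverse to $\f$ and up to $\partial\v$. Smoothness up to the smooth, strictly pseudoconvex boundary should follow from Caffarelli-Kohn-Nirenberg-Spruck type boundary regularity for the degenerate Monge-Amp\`ere equation, while smoothness along $\f$ should follow from a perturbation argument off the model, exploiting the Bedford-Kalka foliation theory. A cleaner alternative, when available, is to construct $\f$ directly by linearizing the tubular neighborhood: identify a neighborhood of $D$ in $\bar X$ with a neighborhood of the zero section of $Y = L^*$ (possible precisely when the Kodaira-Spencer-type obstructions to formal linearization vanish and the formal isomorphism converges, circumstances favored by the positivity of $L$), then pull the model pair $(\f_0, u_0)$ back to $(\f, u)$ and read off all the claimed properties from the model. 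In either route, the ampleness of $L$ enters essentially, both to produce $\v$ as a pseudoconvex tube and to force the foliation to have the correct transversal structure at $D$.
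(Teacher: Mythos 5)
Your model computation on the total space of the normal bundle is correct and matches the paper's starting point (the paper works with $L = N_{D/\bar X}$ and $u_0 = \log(1/\|\ell\|^2)$; your $L^*$-formulation is the same up to sign conventions). But neither of your two proposed routes to the general case actually closes the argument.

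The upper-envelope route produces a generalized solution in the Bedford--Taylor sense, but you cannot conclude from it that $u$ is smooth, that $i\partial\bar\partial u$ has constant rank $n-1$, or that the Bedford--Kalka foliation exists with the stated transversal structure at $D$ and at $\partial\v$. Caffarelli--Kohn--Nirenberg--Spruck boundary regularity concerns the nondegenerate Dirichlet problem $\det(u_{i\bar\jmath}) = f > 0$; for the \emph{degenerate} HCMA, regularity beyond $C^{1,1}$ is not available in general and is known to fail in closely related problems, while the Bedford--Kalka theory requires at least $C^3$ and constant rank. So the ``main obstacle'' you flag is not a deferrable technicality --- it \emph{is} the theorem. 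The linearization route is also not available: a neighborhood of a divisor with \emph{positive} normal bundle is generically not biholomorphic to a neighborhood of the zero section of the normal bundle. Positivity of $L$ does not favor linearizability --- Grauert-type linearization/contraction is the negative-normal-bundle phenomenon, and in the positive case the higher-order formal neighborhood is typically genuinely deformed. Were linearization available, the theorem would be nearly tautological, which it is not.

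The paper sidesteps both issues by constructing the foliation \emph{first} and then defining $u$ from it, rather than the reverse. It degenerates $\bar X$ near $D$ to the total space of $L$ via the family $\u_\lambda$ (with $\u_0$ the model), characterizes leaves as embedded disks through $D$ extremizing a Robin-constant/distortion functional dual to the Kobayashi extremal problem, computes in Lemma~\ref{lem:svf0} that the linearized gradient at the model disk is $2I$ (hence invertible), and applies the implicit function theorem on a Banach manifold of analytic disks to obtain the family of extremal disks for small $\lambda$. The solution $u_\lambda := \Phi_\lambda^*u_0$ is then \emph{defined} by transporting $u_0$ through the resulting diffeomorphism, and plurisubharmonicity of rank exactly $n-1$ is verified by direct estimate on the Levi matrix (Lemma~\ref{lem:upsh}), not appealed to as a regularity theorem. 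This Lempert-style ``foliation first, solution second'' structure, following \cite{lempert81}, is the missing idea in your proposal.
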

Theorem \ref{thm:converse}, especially the condition (\ref{eqn:logatinf}), shows that polynomial growth with respect to $\tau$ determines the algebraic functions on $X$.
\begin{corollary}
	\label{algebraic}
		If $X$ in theorem \ref{thm:converse} is affine algebraic, the rational regular functions on $X$ are the entire functions $f \in \o (X)$ which verify 
			\begin{equation}
				\label{eqn:growth}
					|f| \leq C_N (1 + \tau)^N,
			\end{equation}
		for some $N >> 0$ in $\z$, and some $C_N \in \r^+$ which may depend on $f$.
\end{corollary}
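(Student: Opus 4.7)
The plan is to imitate the argument already used at the end of the proof of Theorem \ref{thm:algn}, translating polynomial growth in $\tau$ into polynomial growth in a local defining function of $D$, and then invoking GAGA.

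First I would set up the local dictionary between $\tau$ and a transverse holomorphic coordinate. Fix $z_0 \in D$ and let $w$ be a local holomorphic coordinate on an open set $U \subset \bX$ near $z_0$ such that $D \cap U = \{w=0\}$ and $dw \neq 0$ on $D\cap U$. By Theorem~\ref{thm:converse}, on the leaf $\triangle_{z_0}$ both $w|_{\triangle_{z_0}}$ and the uniformizing parameter $\zeta$ vanish to first order at $z_0$, so that $\zeta = h\,w$ for some function $h$ that is holomorphic along each leaf and nonvanishing near $D$; a Cauchy integral argument (identical to the one in the proof of Lemma~\ref{extendj} and Proposition~\ref{kahlerity}) shows that $h$ is smooth on $U$. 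Consequently, shrinking $U$ if necessary, there exist constants $0 < c_1 \leq c_2 < \infty$ with
\[
\frac{c_1}{|w|^2} \;\leq\; \tau \;=\; \frac{1}{|\zeta|^2} \;=\; \frac{1}{|h|^2|w|^2} \;\leq\; \frac{c_2}{|w|^2}\qquad \text{on } U - D.
\]

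For the forward direction, let $f$ be a rational regular function on $X$. Then $f$ extends to a meromorphic function on $\bX$ whose polar locus is contained in $D$. Locally near any $z_0 \in D$ there exists $N_f \in \mathbb{N}$ such that $w^{N_f} f$ is holomorphic on $U$, hence bounded by some $C$, so that $|f| \leq C |w|^{-N_f} \leq C' \tau^{N_f/2}$ on $U-D$. Since $D$ is compact a finite cover suffices, and on the complement of a neighborhood of $D$ the function $f$ is bounded (it is continuous on the compact set $\bX - \v'$ for a smaller neighborhood $\v' \Subset \v$ of $D$). Combining these estimates yields $|f| \leq C_N(1+\tau)^N$ for some $N$ and some $C_N$.

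Conversely, suppose $f \in \mathcal O(X)$ satisfies (\ref{eqn:growth}). The inequality $\tau \leq c_2/|w|^2$ gives $|f| \leq C'_N |w|^{-2N}$ on $U - D$. Thus $w^{2N} f$ is a holomorphic function on $U - D$ which is locally bounded near $D \cap U$, so by the Riemann removable singularity theorem it extends holomorphically across $D \cap U$. This means $f$ extends meromorphically across $D$ locally, and these local meromorphic extensions patch (since they all agree on $X$) to a global meromorphic function $\tilde f$ on $\bX$. Because $\bX$ is projective, Serre's GAGA theorem \cite{gaga} implies that $\tilde f$ is rational, and hence $f = \tilde f|_X$ is a regular rational function on $X$.

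The main obstacle I anticipate is the comparison $\tau \asymp |w|^{-2}$ near $D$, which rests on the smoothness of the gluing factor $h$ between the leafwise coordinate $\zeta$ and the ambient transverse coordinate $w$; once this is in hand, both directions reduce to standard removable-singularity and GAGA arguments.
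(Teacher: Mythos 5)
Your proof is correct and follows essentially the same route the paper intends: use the boundary behaviour $\tau = 1/|\zeta|^2$ from (\ref{eqn:logatinf}), compare $\zeta$ with an ambient transverse coordinate $w$ via a smooth nonvanishing gluing factor (as in the proofs of Lemma~\ref{extendj} and Proposition~\ref{kahlerity}), and then translate polynomial growth in $\tau$ into a pole-order bound in $w$ before invoking removable singularities and GAGA — which is exactly the argument the paper uses to establish the coordinate-ring description in Theorem~\ref{thm:algn}.
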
 

Here $\tau > 0$ is a strictly plurisubharmonic extension to all of $X$ of the function $e^{u}$, defined near $D \subset \bar{X}$. It is easy to generalize Theorem \ref{thm:converse} to the situation where $X$ has isolated singularities, or is a smooth resolution of such a space.
\vs
The proof of theorem \ref{thm:converse} follows the usual pattern for finding classical solutions to the HCMA equation, i.e., one first finds the foliation $\f$ and then defines the solution in terms of the formula (\ref{eqn:logatinf}). We find the foliation here in two steps, first deforming the complex manifold $\u$ to another $\u_0$ where there is a model solution of HCMA as desired (see section \ref{ss:geomsetup} below), with foliation $\f_0$, etc., and then using the continuity method to deform this model foliation and solution to the original case $\u$, as desired. In \cite{lempert81} this is done for the Kobayashi disks through a point $z_0 \in D$, a strictly convex domain in $\c^n$, by finding mappings critical for the Kobayashi functional at $z_0$ (the infinitesimal variational problem defining the Kobayashi disks). Here we fix attention at $z_0 \in D,$ our divisor at infinity, and consider a functional which is dual in a vague sense to the Kobayashi functional. It extremizes either the Robin constants associated to disks through $z_0$, or minimizes the distortion of maps through $z_0$ spanning $\u$. For details see subsection \ref{ss:vars} below. 
\vs
Our methods follow those of \cite{lempert81} closely, though our interest being purely local at $D$, we only need to make a small perturbation of the model solution $u_0$ on $\u_0$, rather than needing a global continuity argument. Technically, this means we can use an easy implicit function theorem argument on a Banach manifold, where the evaluation of the differential at the reference point is equivalent to evaluating the second variation of the distortion functional just mentioned. We do not have a global condition such as strict real convexity as in \cite{lempert81}, however, so we must compensate with local calculations which restrict the degree to which we can deform our model solution, equivalently, restrict the size of the neighborhood of $D \subset \bar{X}$ on which we can find a solution of the HCMA equation as desired in theorem \ref{thm:converse}. It seems an interesting open question to ask whether one can say something globally about the size of such special neighborhoods of $D$. Some special cases of this question are known, e.g., \cite{lempert85}.
\vs
%
%
%
%
\subsection{Geometric set-up}
	\label{ss:geomsetup}
	
	Let $L$ be the total space of the normal bundle of $D\subset \bar{X}$, with $\pi: L \to D$ the projection, and $h$ a hermitian metric on $L$ with positive $c_1(L,h)$. In terms of a local holomorphic frame $\sigma \neq 0$ on $U \subset D$, we have $L|_U \cong U \times \c$ with coordinates $z, t$, there is a positive representative $h = h(z) > 0$ such that the length-squared function $\|\ell \|^2$ for $\ell = t \sigma$ in $L$ is given by $h(z) |t|^2$. The model solution of the HCMA equation on $L$ is just 
	\[
			u_0 := \log \frac1{\|\ell\|^2} = \log \frac1{h |t|^2}, \; \text{locally}.
	\]
	
	It is easy to verify that $u_0$ possesses the properties claimed in Theorem \ref{thm:converse}, if we take $\u_0 =$ the sublevel set $\{\ell \in L \,| \, \|\ell\|^2 \leq 1\}.$ In particular, $u_0$ is p.s.h. because 
	\[
			\ddb u_0 = \ddb \pi^* \frac1{h} = \pi^* c_1(L, h) \geq 0 \, \text{on $L - \{0\}$},
	\]
by assumption. That $\tau_0 := e^{u_0}$ is strictly p.s.h. on $L- \{0\}$ follows from the fact that on $L_{z_0} =\pi^{-1}(z_0)$ we have in local coordinates $\tau = \frac1{h(z_0) |t|^2}$. Similarly, the solution is strictly periodic because then $\eta_0$ (\ref{pvf1}) is tangent along every $L_{z_0}$, and equal to $\dd[\;,\theta]$ there, if $\zeta = \rho e^{i\theta}$ in polar coordinates. Thus the flow of $\eta_0$ is periodic of exact period $2\pi$ there.
\vs
To construct a deformation of $\u_0$ and the solution $u_0$ there, consider the holomorphic line bundle $L(D)$ on $\v$ (or $\bar{X}$), which has a holomorphic section $s = s_D$ which vanishes exactly to first order exactly along $D$. $L(D) |_D \cong L$, and we let $h$ denote now an extension of the metric $h$ originally on $L$ to all of $L(D)$. Without loss of generality, extending the definition of $u_0 := \log \frac1{\|s\|^2}$ on all of $L(D)$, we can assume that 
	\begin{equation}
		\label{eqn:modelpole}
		\ddb u_0 = \ddb \log \frac1{\|s\|^2} \geq 0
	\end{equation}
on $\v - D$: we may shrink $\v$ and replace $h$ by $e^{\mu \|s\|^2} h$, for $\mu \in \r, \mu >> 0$, if necessary. Define $B_r(L(D)) := \{ \|\ell\|^2 \leq r^2\} \subset L(D),$ for $r > 0.$ Define a set $\u \subset L(D) \times \c$ by
\[
\tilde{\u} = \{ (\ell, \lambda) \in L(D) \times \c \; | \; \lambda \ell = s_D(\pi_{\v}(\ell))\},
\]
where $\pi_{\v}: L(D) \to \v$ is the projection, and
\[
\u = B_1(L(D)) \cap \tilde{\u}.
\]
Note that this is consistent with the use of $\u_0$ above. $\u$ has a projection $p$ to $\c$, and we set $\u_{\lambda} = p^{-1}(\lambda) \subset L(D)$.
Let $s$ be the defining section of $D$ and set 
\[
\v_{\lambda} = \pi (\u_{\lambda}) \subset \v.
\]
for $\lambda \neq 0$. Note that $\pi (\u_0) = D,$ and that 
\[
\v_{\lambda} = \{\|s\| \leq |\lambda|^2\}.
\]
\vs
Fixing a $z_0 \in D$, we can find a coordinate patch $U \subset \bar{X}$ and local coordinates $z = (z_1,\ldots, z_n)$ on $U$ such that $z(z_0) = 0.$
We can assume there is a non-vanishing holomorphic section $\sigma$ of $L(D)$ over $U$ with fiber coordinate $t$ such that on $U$, $s = s_D = z_n \sigma.$ Hence, over $U$, $s(U)$ is defined by the equation $t = z_n.$ It follows that, over $U \times \c$, $\tilde{\u}$ is defined by 
\[
\tilde{\u} = \{ F := \lambda t - z_n = 0\}, 
\]
and 
\[
B_1(L(D)) \times \c = \{(z,t, \lambda) \, | \, h(z) |t|^2 < 1\}.
\]	
Since $\dd[F, z_n] \equiv -1,$ we see that $\tilde{\u}$ is a smooth submanifold of $L(D) \times \c$, and that we may use $(z_1,\ldots, z_{n-1}, t, \lambda)$ as coordinates on $\tilde{\u}$. We note also that
\[
d\lambda \wedge dF \equiv - d\lambda \wedge dz_n \; \text{mod terms not involving} \; d\lambda \wedge dz_n,
\]
and so $p: \tilde{\u} \to \c$ is a fiber bundle with smooth fibers $\tilde{\u}_{\lambda}$. Finally, since $B_1(L(D)) = \{ \log \|\ell \|^2 \leq 0\} = \{2\log \rho + \log h \leq 0\}$, where $t = \rho e^{i\theta}$, and
\[
d\log \|\ell \|^2 \wedge d\lambda \wedge dF \equiv - 2 \, \frac{d\rho}{\rho} \wedge d\lambda \wedge dz_n,
\]
modulo terms not involving $d\rho \wedge d\lambda \wedge dz_n$, we have that $p: \u \to \c$ is a fiber bundle with fibers $\u_{\lambda}$ which are smooth manifolds with boundary. Similarly, $\v_{\lambda}, \lambda \neq 0,$ are smooth manifolds with boundary in $\v$, for $|\lambda|$ small enough.
\vs
\vs
For every $\lambda \in \c$, the curvature property of $L(D), h$ guarantees that the complex manifold with boundary $\u_{\lambda}$ has a smooth, strongly pseudoconcave boundary in $\tilde{\u}_{\lambda}$, for any $\lambda \in \c$.
\vs
We may further arrange our local coordinates, replacing the local frame $\sigma$ of $L(D)$ by $\tilde{\sigma} = g \cdot \sigma,$ where $g = g(z)$ is holomorphic and non-vanishing on a neighborhood of $U$, so that in this new frame,
\[
h(0) = 1\and \dd[h,z_i](0) = \frac{\partial^2 h}{\partial z_i \partial z_j}(0) = 0, \forall i, j, k,
\]
if we choose the two-jet of $g$ appropriately at $z_0$, i.e., $z = 0$. This implies
\[
\frac{\partial^3 h}{\partial z_i \partial z_j \partial \bar{z}_k}(0) = 0, \forall i, j, k.
\]
We can further change the coordinates $z$ complex linearly at $z= 0$, so that 
\[
\frac{\partial^2 h}{\partial z_j \partial \bar{z}_k}(0) = \delta_{j,\bar{k}}, \forall j, k.
\]
\vs
\vs
\vs
\noindent {\em Remarks:}	
\vskip 1mm
\noindent 1. In what follows, $\lambda$ will be our deformation parameter for perturbation of the model solution and its foliation. 
\vs
\noindent 2. The above is a standard construction which, in the case where $X$ affine algebraic compactifies to a smooth, projective $\bar{X} \subset \p^N$, with $D := \bar{X} \pitchfork H_{\infty}$ is the transverse intersection of $\bar{X}$ with the hyperplane at infinity $H_{\infty}$, is just the deformation of $\bar{X}$ to the cone over the divisor at infinity $D$. This is the case when $D$ is very ample. In order to obtain an exact converse to theorem \ref{thm:algn} we have to allow $D$ to be just ample, equivalently, to intersect the hyperplane at infinity with tangential multiplicity $\geq 1$.

%
%
%
%
\vs
\vs
\subsection{Variational problems}
	\label{ss:vars}
	Consider a point $z_0 \in D$ and let $u_0$ be as in (\ref{eqn:modelpole}). Let $\zeta$ be the standard coordinate on the unit disk $\D$, and let $f: (\D, 0) \to (\u, z_0)$ be a holomorphic map such that $f'(0) = \dd[f,\zeta](0) \neq 0$ and is transverse to $D$ at $z_0$. Define
	\begin{equation}
		\label{eqn:shrobin}
			v_f(\zeta) = f^*u_0 - \log \frac1{|\zeta|^2}.
	\end{equation}
It is easy to check that, by the transversality condition on $f, v_f$ is subharmonic on $\D^*$ and extends smoothly across $0 \in \D$. Define the Robin constant $R(f) := v_f(0).$ This definition depends on the choice of a reference p.s.h. function $u_0$ with the correct logarithmic pole order along $D$, but this is by an additive constant which is independent of $f$. 

With such a choice in hand, let us first fix the parameters $z_0 \in D, \lambda \in \D(\rho), \rho << 1, \, k \, \text{an integer}\, \geq 2, \and \alpha \in (0,1),$ and consider the space of competitors
	\[
		\HH^{k,\alpha}_{\lambda, z_0}(\D, \u_{\lambda}) := f:(\D, 0) \to (\u_{\lambda}, z_0) 
	\]
	\begin{equation}
		\label{eqn:competitors}
	 \text{such that}\;
		\left\{{\begin{array}{cl}
			(1) & f \, \text{is of H\"older class}\, \mathcal{C}^{k,\alpha} \,\text{on}\, \bar{\D} \\
				&	\\
			(2) & f \,  \text{is a closed embedding of}\, \bar{\D} \subset \bar{\u_{\lambda}} \\
				&	\\
			(3) & f(\bar{\D}) \pitchfork D = \{z_0\}.
		\end{array}}\right.
	\end{equation}
	
For each such $f \in \comp$, we have $v_f|_{\partial \D} \equiv 0$, and so 
\begin{equation}
	\label{eqn:Robbd}
		R(f) \leq 0.
\end{equation}
Set 
	\[
		R(\u_{\lambda}, z_0) := \sup_{f \in \comp} R(f).
	\]
\vs
\vs
Suppose we have a p.s.h. solution $u = u_{\lambda}$ of the HCMA equation as in Theorem \ref{thm:converse}, and we use $u_{\lambda}$ in place of $u_0$ in the definition (\ref{eqn:shrobin}) of $v_f$ and hence of $R(f)$, we will change the value of $R(\u_{\lambda}, z_0)$, but not the set of competitors which would maximize $R(f)$. Then let $f_{z_0}$ be the parametrization of the disk leaf of the foliation $\f_u$ through $z_0$ by $\D$, such that $f(0) = z_0$. Then 
\[
	f_{z_0}^* u - \log \frac1{|\zeta|^2} \equiv 0 \on \D,
\]
whereas for any other competitor $f \in \comp,$ we have $f^*u|_{\partial \D} \equiv 0,$ and so $v_f \leq 0 \on \D$ and $R(f) \leq 0.$ Therefore, $0 = R(\u_{\lambda}, z_0),$ and if any other competitor $f$ has $R(f) = 0$, then $f^*u = \log \frac1{|\zeta|^2} \on \D^*$. By the rank condition on the complex Hessian of $u$, $f(\D)$ must lie within a leaf of the foliation $\f_u$, and then necessarily the leaf passing through $z_0$. Schwarz's lemma then says that $f(\zeta) = f_{z_0}(e^{i\theta_0} \cdot \zeta)$ for a suitable choice of $\theta_0 \in \r/2\pi \z.$	Thus, to construct the foliation $\f$ for the solution $u$ sought in Theorem \ref{thm:converse} we should construct it by foliating $\u_{\lambda}$ by extremal disks for the Robin functional at the various points $z_0 \in D$. It is not clear at this point that such a foliation exists, although it clearly does for $\u_0$ on $\u_0$.
\vs

Using the coordinates $(z', t, \lambda) = (z_1, \ldots, z_{n-1}, t, \lambda)$ as in the previous section, which cover a neighborhood of $[\u_0 \cap \pi^{-1}(z_0)] \times \D$. For the model solution, we see that $f_0(\zeta, z', \lambda = 0) = (z', \frac{e^{i\theta_0}}{h(z')^{\frac12}}\cdot \zeta, 0)$ are the minimizing embeddings, for $\theta_0 \in \r$. We see explicitly that, for 
$f \in \compvar[z',\lambda]$, 
\begin{equation}
	\label{eqn:r&d}
		R(f) = \log \frac1{|\dd[f_n, \zeta](0)|^2} + c,
\end{equation}
where $f(\zeta) = (f_1(\zeta,\ldots,f_{n-1}(\zeta), t = f_n(\zeta), \lambda) \in \u_{\lambda}$ and $f(0) = (z', t = 0, \lambda).$ Here $c$ is a real constant which depends on $z', \lambda$ (as well as the choice of the reference function used for comparisons), but not on $f$. Hence, to find the $R(\u_{\lambda}, z')$, it suffices to find $\inf_{f \in \compvar[z',\lambda]_0} \dd[f_n,\zeta](0),$ where
\[
\compvar[z',\lambda]_0 = \{ f \in \compvar[z',\lambda] \, | \, \dd[f_n, \zeta] \in \r\}.
\]
\vs
\vs
\noindent Thus, ``dually" to the Kobayashi problem, we seek to {\em minimize} the distortion $\dd[f_n,\zeta](0) \in  \r^+$ over embedded disks which {\em span} \, $\u_{\lambda}$ and pass tranversally once across $D$ at $z'$. Note that there is a strictly positive lower bound for $\dd[f_n,\zeta](0)$, locally uniformly in $z', \lambda$, and $f \in \compvar[z',\lambda]_0,$ by (\ref{eqn:r&d}) and the fact that $R(f)$ is uniformly bounded from above.
\vs
To find critical points for the functional $\dd[f_n,\zeta](0)$ in each $\compvar[z',\lambda]_0$, we will use a standard implicit function theorem on a Banach manifold of embedded disks. First, we will define the moduli space of disks $\m_{loc} := $\, the set of all $f: (\D, 0) \to (\u, D)$ which satisfy
\begin{enumerate}
	\item $f$ is holomorphic and of class $\mathcal{C}^{k, \alpha}(\bar{\D})$,
	\vs
	\item $f$ is an embedding of $\bar{\D}$ into $\bar{\u}$ such that $f(\partial \D) \subset \partial \u$,
	\vs
	\item $f(0) = z' \in D, \and f(\D) \cap D = f(\D) \cap \{t = 0\} = \{z'\}$, 
	\vs
	\item $\dd[f_n,\zeta](0) > 0$,
	\vs
	\item $p\circ f \equiv \lambda$, for some $\lambda \in \c$.
\end{enumerate}
These are the same as the corresponding items in (1) - (3) in the definition (\ref{eqn:competitors}) of $\comp$ above, except that $f(\D) \cap D = \{z'\}$ and $\lambda$ are now variable. Condition (4) is the restriction for guaranteeing the uniqueness of extremals. In fact, there is an evaluation map $e: \m_{loc} \ni f \to (f(0), p(f(0))) \in D \times \c$, and $e^{-1}(z',\lambda) = \compvar[z',\lambda] = \m_{loc, z', \lambda}$. It will follow from the arguments below that the map $e$ is a smooth fiber bundle of Banach manifolds.
\vs
Let us abbreviate by $r = r(z', t, \lambda) = \|\ell\|^2 = h(z', z_n = \lambda t)\cdot |t|^2,$ expressed in our special coordinates. To show that $\m_{loc}$ is a Banach manifold, we have to figure out what its tangent spaces should be, and then graph it over its tangent space smoothly, locally. So suppose we have a curve $f(a) = f(\zeta, a) = (f_1(\zeta, a), \ldots, f_{n-1}(\zeta, a), f_n(\zeta, a), \lambda(a)) \in \m_{loc}$, smooth in the parameter $a \in (-\epsilon, \epsilon)$, and therefore satisfying 
\begin{equation}
	\label{eqn:bc}
		r(f(\zeta, a)) \equiv 1, \forall \zeta \in \partial \D.
\end{equation}	
For simplicity, we will assume that $f = f(\zeta, a)$ has constant $z' = (f_1(0),\ldots, f_{n-1}(0))$ and constant $\lambda$. It will be clearly seen below that these are free parameters, and we will just keep track of the smooth dependence of our constructions on these parameters.
\vs
Differentiating (\ref{eqn:bc}) at $a = 0$, we get 
\begin{equation}
	\label{eqn:tgt}
		2 \re (r_t(f(\zeta, 0)) \, \cdot \delta f_n(\zeta)) + 2 \re (\sum_{i = 1}^{n-1} r_i(f(\zeta, 0)) \cdot \delta f_i(\zeta)) = 0, 
\end{equation}
for all $\zeta \in \partial \D.$ Note that the functions $r_i = \dd[r, z_i](f(\zeta, 0)),$ and $r_t = \dd[r, t](f(\zeta, 0))$ are in $\mathcal{C}^{k,\alpha}$ on $\partial \D$.
\vs
Consider the extremal map $f_0(\zeta) = (0, \zeta, 0) \in \compvar[z_0, 0]_0$. Then 
\begin{equation}
	\label{eqn:qf0}
		r_t(f_0(\zeta)) = \bar{\zeta},\;  r_i(f_0(\zeta)) \equiv 0, \, \forall \zeta \in \partial \D.
\end{equation}
Now we will assume, which is sufficient for our purposes, that our $f \in \m_{loc}$ is sufficiently close to $f_0$ in $\mathcal{C}^{k,\alpha}(\bar{\D}, \bar{\u})$ that $r_t(f(\zeta)) \neq 0 \in \c,$ and has winding number -1 on $\partial \D$. Hence, $\zeta r_t(f)$ has winding number 0, and by the classical Riemann-Hilbert problem, we can write 
\begin{equation}
	\label{eqn:RHrt}
		\zeta \, r_t(f(\zeta)) = \rho(\zeta) g(\zeta), \zeta \in \partial \D,
\end{equation}
where $\rho(\zeta) = \rho(\zeta, f) > 0$ on $\partial \D$, and $g$ extends to a nowhere vanishing holomorphic function on $\D$, also denoted $g$, such that $|g(0)| = 1$. Such $\rho, g$ are unique, in $\mathcal{C}^{k,\alpha}(\partial \D)$, and are smooth functions of $r_t(f)$, and hence, of $f$. Since we want $\delta f_n(0) = 0$, we look for $w = \zeta^{-1} \delta f_n,$ and from (\ref{eqn:tgt}) we have
\[
\re [g \cdot w] = - \frac1{\rho} \, \re(\sum_{i = 1}^{n-1} r_i(f(\zeta, 0)) \cdot \delta f_i(\zeta)) ,
\]
from whence we conclude $g \cdot w = q,$ where
\begin{equation}
	\label{eqn:q}
q = \\- \frac1{\rho} \, \re(\sum_{i = 1}^{n-1} r_i(f(\zeta)) \cdot \delta f_i(\zeta)) + i T(\frac1{\rho} \, \re(\sum_{i = 1}^{n-1} r_i(f(\zeta)) \cdot \delta f_i(\zeta))) + i b,
\end{equation}
where $b = b(f)$ is a real constant determined so that $\im w(0) = 0.$ 
\vs
In (\ref{eqn:q}), $T$ is the classical Hilbert transform, so that if $u = u(\zeta), \zeta \in \partial \D$, and $U(\zeta)$ its Poisson integral, harmonic on $\D$, then $T(u)$ is the boundary value of $U^*(\zeta)$, where $U + \sqrt{-1} U^*$ is holomorphic on $\D$ with $U^*(0) = 0$. Then it is well-known that
\[
T: \mathcal{C}^{k,\alpha}(\partial \D, \r) \to \mathcal{C}^{k,\alpha}(\partial \D, \r)
\]
continuously, and if $\mathcal{C}^{k,\alpha}_0(\partial \D, \r) = u \in \mathcal{C}^{k,\alpha}(\partial \D, \r)$ with
\[
\int_0^{2\pi} u \; d\theta = 0,
\]
then
\[
T: \mathcal{C}^{k,\alpha}_0(\partial \D, \r) \to \mathcal{C}^{k,\alpha}_0(\partial D, \r)
\]
and is a topological isomorphism. We extend $T$ complex linearly to complex valued functions in $\mathcal{C}^{k,\alpha}_0(\partial \D, \c)$. Then 
\[
\HH_0^{k,\alpha} \ni f \stackrel{T}{\to} \; -i \cdot f.
\]
Finally, for $f \equiv A + i B, A, B \in \r,$
\[
T(f) = A.
\]
\vs
Returning to  (\ref{eqn:q}), if $f = f_0$, then $g(f_0) \equiv 1$ ({\em cf.} (\ref{eqn:qf0})), and so, by the continuity of $g = g(f)$ as a function of $f$, we have, for $f$ sufficiently close to $f_0$ in $\mathcal{C}^{k,\alpha}$-norm, that we may write $g(0)$ uniquely as $g(0) = e^{i \theta_0}, \theta_0 \in (-\frac{\pi}2, \frac{\pi}2).$ Now we want
\begin{equation}
	\label{eqn:impart}
		\im w(0) = \im (e^{-i\theta_0} \cdot (a + i b)) = 0, 
\end{equation}
where
\[
a = - \frac1{2\pi} \int_0^{2\pi} \frac1{\rho} \, \re(\sum_{i = 1}^{n-1} r_i(f(\zeta, 0)) \cdot \delta f_i(\zeta)) \; d\theta.
\]
Solving (\ref{eqn:impart}) gives
\begin{equation}
	\label{eqn:b}
		b = \tan (\theta_0) \cdot a
\end{equation}
which is a real linear function of $(\delta f_1, \ldots, \delta f_{n-1})$. Hence we have arrived at a uniquely determined
\begin{equation}
	\label{eqn:deltan}
		\delta f_n = \zeta \cdot g(f)^{-1} \cdot q \in \HH^{k,\alpha}_0(\D),
\end{equation}
with $q$ as in (\ref{eqn:q}) and $b$ is fixed by (\ref{eqn:b}). Define $T_f(\m_{loc, z', \lambda}) $ as the set
\begin{equation}
	\label{eqn:defTf}
	\{\delta f = (\delta f_1,\ldots, \delta f_n) \, \in \mathcal{B} \times \HH^{k, \alpha}_0 \, | \, \delta f_n \, \text{satisfies} \, (\ref{eqn:deltan})\}, 
\end{equation}
where the real Banach space $\mathcal{B}$ is $\HH^{k,\alpha}_0(\D)^{\oplus (n-1)}$. We will justify the notation below.
\vs
Next, still for $z', \lambda$ fixed, we would like to show that $\m_{loc, z', \lambda}$ is a manifold modeled on the real Banach space $\mathcal{B}$ near $f_0$. To do  this, let us seek a small $w \in \HH^{k,\alpha}$ with $\im w(0) = 0,$ for $\delta f = (\delta f_1,\ldots, \delta f_{n-1}, 0) \in T_{f_0}(\m_{loc, 0, 0})$ sufficiently close to $0 \in T_{f_0}(\m_{loc, 0, 0})$, so that 
\begin{equation}
	\label{eqn:bdrycond}
		r(z_1 + \delta f_1,\ldots, z_{n-1} + \delta f_{n-1}, \zeta + \zeta \cdot w, \lambda) \equiv 1,
\end{equation}
for all $\zeta \in \partial \D$.
\vs
Writing $w = u + i v,$ with $u, v$ $\r$-valued, we can differentiate (\ref{eqn:bdrycond}) with respect to $u$ and evaluate at $w = 0$:
\[
\dd[\;,u][r(\delta f_1 + z_1,\ldots, \delta f_{n-1} + z_{n-1}, \zeta + \zeta \cdot w, \lambda)] |_{w=0} = 2 \re \zeta r_t(f).  
\]
When $z' = 0, \lambda = 0$, we have $\zeta r_t(f_0) \equiv 1$, so for all $\delta f$ close to $0 \in T_{f_0}(\m_{loc}, 0, 0)$, $w$ close to $0 \in \HH^{k,\alpha}$ such that $\im w(0) = 0,$ and $z', \lambda$ close to 0, we have 
\[
\dd[\;,u][r(\delta f_1 + z_1,\ldots, \delta f_{n-1} + z_{n-1}, \zeta + \zeta \cdot w, \lambda)] |_{w=0} > 0.
\]
Hence, we can solve equation (\ref{eqn:bdrycond}) for $u$ in terms of $v$:
\begin{equation}
	\label{eqn:u}
		u = G(v) = G(f, \delta f, z', \lambda, v),
\end{equation}
where $G$ is smooth in all of its arguments. Given $v$ small enough in $\mathcal{C}^{k,\alpha}_0$, we define 
\begin{equation}
	\label{eqn:u0}
		u(0) = \frac1{2\pi} \int_0^{2\pi} G(f, \delta f, z',\lambda, v) \, d\theta,
\end{equation}
and $\tilde{u} = u - u(0) \in \mathcal{C}^{k,\alpha}_0$. If $w$ is holomorphic, then $Tu = v$, and $Tv = - \tilde{u}$, we can apply $T$ to both sides of (\ref{eqn:u}) to get 
\begin{equation}
	\label{eqn:v}
		v = T(G(f, \delta f, z', \lambda, v)) := F(v),
\end{equation}
an equation for $v$ of fixed point type on $\mathcal{C}^{k,\alpha}_0.$ Substituting (\ref{eqn:u}) into (\ref{eqn:bdrycond}), we get
\[
r(z' + \delta f, \zeta \cdot (1 + G(v) + i v), \lambda) \equiv 1.
\]
Differentiating this with respect to $v$, we get
\begin{equation}
	\label{eqn:dv}
		2 \re \, (\zeta r_t \cdot (G_v + i )) = 0.
\end{equation}
When $\delta f =0, \and v =0,$ we get $\zeta r_t(f_0) \equiv 1$, and hence $G_v \equiv 0$ on $\partial \D$, which in turn implies that the derivative of $DF(f_0, 0) = DF(f_0, v = 0, z' = 0, \lambda = 0): \mathcal{C}^{k,\alpha}_0 \to \mathcal{C}^{k,\alpha}_0$ is the 0 operator, and $v =0$ is a contracting fixed point for $F$. Hence for $\delta f \in T_{f_0}(\m_{loc, 0, 0})$ close to 0, $v$ close to $0 \in \mathcal{C}^{k,\alpha}_0$, and $z', \lambda$ close to 0, we have the operator norm $\|DF\| \leq \epsilon.$ If we choose $\epsilon$ small enough, then we conclude by the contraction mapping theorem that there is a small neighborhood of $\delta f = 0 \in T_{f_0}(\m_{loc, 0, 0})$ and a unique solution $v \in \mathcal{C}^{k, \alpha}_0$ to (\ref{eqn:v}), and this solution depends smoothly on $\delta f, z', \and \lambda$. Finally, for $\delta f \in T_{f_0}(\m_{loc, 0, 0})$, the map
\begin{equation}
	\label{eqn:param}
		\delta f \to (\delta f_1 + z_1, \ldots, \delta f_{n-1} +z_{n-1}, \zeta(1 + w(\zeta, \delta f, z', \lambda)))
\end{equation}
is a smooth parametrization of $\m_{loc, z', \lambda}$ near $f_0$. It is now easy to verify that the tangent space to this Banach manifold at a given $f$ is the space $T_f(\m_{loc, z', \lambda})$ as given above in (\ref{eqn:defTf}).
\vs
Now we can return to our functional 
\begin{equation}
	\label{eqn:defE}
		\begin{array}{rcl} \m_{loc, z', \lambda} \ni f \to E(f) & := & \dd[f_n,\zeta](0) \\
			&	&	\\
			& = 	& \frac1{2\pi} \re \int_0^{2\pi} \zeta^{-1} f_n(\zeta) \; d\theta.
		\end{array}
\end{equation}
We can calculate the $L^2$-gradient of $E$ at $f$ directly now, and letting $\gamma(a)$ be a smooth curve in $\m_{loc, z', \lambda}$ with $\dot{\gamma}(0) = \delta f \in T_f(\m_{loc, z', \lambda})$, we get
\begin{equation}
	\label{eqn:GradE}
		\begin{array}{rcl} \frac{d \;\;}{da} E(\gamma(a))\, |_{a=0} & = & \frac1{\pi} \, \int_0^{2\pi} \re[ \zeta^{-1} \delta f_n ]\; d\theta \\ & & \\ & = & \frac1{\pi} \, \int_0^{2\pi} \re[ g(f)^{-1} \cdot q ]\; d\theta,
		\end{array}
\end{equation}
using (\ref{eqn:deltan}), (\ref{eqn:RHrt}) and (\ref{eqn:q}). But
\[
		\frac1{\pi} \, \int_0^{2\pi} \re[ g(f)^{-1} \cdot q ]\; d\theta = 2 \re \, \frac{q(0)}{g(0)},
\]
where $g(0) = e^{i\theta_0},$ and
\[
q(0) = - (1 + i \, \tan \theta_0) \; \frac1{2\pi} \, \int_0^{2\pi}  \re \, [\sum \frac{r_i}{\rho} \delta f_i] \; d\theta,
\]
so that
\[
\begin{array}[c]{rcl}
	&&\\
	2 \re \, \frac{q(0)}{g(0)} & = & - 2 \re [\frac{1 + i \tan \theta_0}{\cos \theta_0 - i \sin \theta_0}] \cdot \frac1{2\pi} \, \int_0^{2\pi}  \re \, [\sum \frac{r_i}{\rho} \delta f_i] \; d\theta\\
		&	&	\\
		& = & - \frac{\sec \theta_0}{\pi} \; \int_0^{2\pi}  \re \, [\sum \frac{r_i}{\rho} \delta f_i] \; d\theta.\\
		&&
\end{array}
\]
Since each $\delta f_i \in \HH^{k,\alpha}_0$, we can rewrite this last expression as 
\[
- \frac{\sec \theta_0}{\pi} \, \int_0^{2\pi} \, \re (\sum \, S_0(\bar{\frac{r_i}{\rho}}) \, \overline{\delta f_i}) \; d\theta,
\]
where 
\[
S_0: \mathcal({C}^{k,\alpha})^{\oplus (n-1)} \to (\HH^{k,\alpha}_0)^{\oplus (n-1)} = \mathcal{B}
\]
is the Szeg\H{o} projector. Hence, we find
\begin{equation}
	\label{eqn:gradEfinal}
		\nabla E(f) = - 2 \sec \theta_0 \; (S_0( \,\overline{\frac{r_1}{\rho}} \, ), \ldots, S_0( \, \overline{\frac{r_{n-1}}{\rho}} \, )) \in \mathcal{B}.
\end{equation}
Note that we take the $L^2$ inner product on $(\mathcal{C}^{k,\alpha})^{\oplus (n-1)}$ to be 
\[
(h_1,h_2) = \frac1{2\pi} \, \int_0^{2\pi} \re[ \sum h_{1, \ell}, \overline{h_{2,\ell}}] \; d\theta.
\]
In particular, we have an important corollary to (\ref{eqn:gradEfinal}):
\begin{corollary}
	\label{cor:rihol}
		$E$ is critical on $\m_{loc, z', \lambda}$ at $f$ if and only if each $\frac{r_{\ell}}{\rho}$ on $\partial \D$ extends holomorphically to $\bar{\D}, \ell = 1, \ldots, n-1$.
\end{corollary}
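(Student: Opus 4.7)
The plan is to read off the corollary directly from the gradient formula (\ref{eqn:gradEfinal}), reducing the criticality condition to a kernel condition for the Szeg\H{o} projector and then unpacking that kernel in terms of Fourier coefficients. Since $\theta_0 \in (-\pi/2, \pi/2)$ so that $\sec \theta_0 \neq 0$, the vanishing of $\nabla E(f) \in \mathcal{B} = (\HH^{k,\alpha}_0)^{\oplus (n-1)}$ is equivalent to the system $S_0(\overline{r_\ell/\rho}) = 0$ for $\ell = 1, \ldots, n-1$. Thus the content of the corollary is the statement that, for $\varphi \in \mathcal{C}^{k,\alpha}(\partial \D, \c)$, $S_0(\bar\varphi) = 0$ if and only if $\varphi$ admits a holomorphic extension to $\bar{\D}$.

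To prove this equivalence, I would argue in terms of Fourier series on $\partial \D$. By definition $S_0$ is the orthogonal projection of $\mathcal{C}^{k,\alpha}(\partial \D)$ onto $\HH^{k,\alpha}_0$, whose $L^2$-closure is the Hardy subspace spanned by $\{\zeta^k\}_{k \geq 1}$. Hence $\ker S_0$ is the closure of the span of $\{\zeta^k\}_{k \leq 0}$, i.e.\ the space of functions on $\partial \D$ whose positive Fourier coefficients vanish; equivalently, functions admitting an antiholomorphic extension to $\bar{\D}$. Applying this with $u = \overline{r_\ell/\rho}$, the condition $S_0(\overline{r_\ell/\rho}) = 0$ says that $\overline{r_\ell/\rho}$ extends antiholomorphically to $\bar{\D}$, which is the same as saying that $r_\ell/\rho$ extends holomorphically to $\bar{\D}$.

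Combining the two paragraphs gives both implications in the corollary. There is no genuine obstacle here: once the gradient formula (\ref{eqn:gradEfinal}) has been established (which is where the analytic work was concentrated, via the Riemann--Hilbert factorization (\ref{eqn:RHrt}) and the Hilbert transform), the corollary is a purely formal consequence of the definition of the Szeg\H{o} projector. The only minor point to verify is regularity, namely that $r_\ell/\rho$, \emph{a priori} only in $\mathcal{C}^{k,\alpha}(\partial \D)$, inherits genuine holomorphic regularity on $\bar{\D}$ from the vanishing of its negative Fourier modes; this follows from standard Hardy space theory for $\mathcal{C}^{k,\alpha}$ boundary data.
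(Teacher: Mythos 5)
Your argument is correct and matches the paper's own proof, which consists of the single observation that $S_0(\overline{r_\ell/\rho})=0$ iff $r_\ell/\rho$ is the boundary value of a holomorphic function; you have merely spelled out this kernel characterization via Fourier modes. The only additional content you add is the regularity remark at the end, which is a standard (and correctly cited) point that the paper leaves implicit.
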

\begin{proof}
Indeed, $S_0(\frac{r_{\bar{\ell}}}{\rho}) = 0$ if and only if $\frac{r_{\ell}}{\rho}$ is the boundary value of a holomorphic function.
\end{proof}
\vs
\vs
%
%
%
%
\subsection{Perturbation argument}
	\label{ss:pert}
We are now in a position to find critical points of $E$ on $\m_{loc, z', \lambda}$ for $(z', \lambda) \neq (0,0)$. Consider now the smooth map of Banach manifolds
\[
\nabla E: \m_{loc} \to \mathcal{B}.
\]
where $\nabla E (f)$ as in (\ref{eqn:gradEfinal}), is the gradient computed only along the fiber $\m_{loc, f(0), \lambda(f)}$. Note that $\m_{loc, 0, 0} \subset \m_{loc}$ and so $T_{f_0}(\m_{loc,0,0}) = \mathcal{B} \subset T_{f_0}(\m_{loc})$. 

%
\vs
\begin{lemma}
	\label{lem:svf0}
\[
D(\nabla E)(f_0)\, |_{T_{f_0}(\m_{loc,0,0})} = 2\, I \in \,\text{Lin}(\mathcal{B}, \mathcal{B}).
\]
\end{lemma}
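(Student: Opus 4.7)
The argument will be a direct calculation, made almost trivial by the fact that $f_0$ is a ``flat" extremal at which almost every ingredient of the gradient formula vanishes to leading order. My plan is to evaluate the building blocks of (\ref{eqn:gradEfinal}) at $f_0$ using the normalizations of $h$ set up in subsection \ref{ss:geomsetup}, isolate the one surviving term in the linearization, and verify that the Szeg\H{o} projector acts trivially on it.

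First I record what all the ingredients equal at $f_0(\zeta)=(0,\zeta,0)$. From the normalizations $h(0)=1$, $dh(0)=0$, pure $\partial^2 h(0)=0$, $\partial^3 h/(\partial z_j\partial z_k\partial\bar z_\ell)(0)=0$, and $h_{j\bar k}(0)=\delta_{jk}$, together with the fact that $\lambda=0$ on $\u_0$, one computes $r_i(f_0)\equiv 0$ on $\partial\D$ for $i=1,\ldots,n-1$, while $r_t(f_0)=h(0)\bar\zeta=\bar\zeta$. Hence $\zeta r_t(f_0)\equiv 1$ on $\partial\D$, forcing $\rho(f_0)\equiv 1$, $g(f_0)\equiv 1$, $\theta_0(f_0)=0$, and $\sec\theta_0(f_0)=1$.

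The crucial simplification is that $r_i(f_0)\equiv 0$. In the formula $[\nabla E]_i=-2\sec\theta_0\,S_0(\overline{r_i/\rho})$, every Leibniz term arising from varying $\sec\theta_0$, $g$, or $\rho$ carries the factor $S_0(\overline{r_i/\rho})|_{f_0}=S_0(0)=0$ and therefore drops out. Only the variation of $r_i$ itself survives, so
\[
D_{\delta f}[\nabla E]_i\big|_{f_0} \;=\; -2\,S_0\!\left(\overline{D_{\delta f} r_i|_{f_0}}\right).
\]
A chain-rule expansion of $r_i=(\partial h/\partial z_i)(z',\lambda t)\,|t|^2$, evaluated at $f_0$, has partial derivatives in the $z_j$, $\bar z_j$, $t$, $\bar t$ and $\lambda$ directions; the normalizations knock out all of them except $\partial r_i/\partial \bar z_j|_{f_0}=h_{i\bar j}(0)\,|\zeta|^2=\delta_{ij}$ on $\partial\D$. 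Thus $D_{\delta f} r_i|_{f_0}=\overline{\delta f_i}$, whence $\overline{D_{\delta f} r_i|_{f_0}}=\delta f_i$. Because $\delta f_i\in\HH^{k,\alpha}_0(\D)$ is already the boundary value of a holomorphic function on $\D$ with zero mean, $S_0(\delta f_i)=\delta f_i$, and we conclude $D_{\delta f}[\nabla E]_i|_{f_0}=\pm 2\,\delta f_i$ for each $i$, i.e., $D(\nabla E)(f_0)|_{\mathcal B}=2I$ after reconciling the sign conventions in the definition of $\nabla E$.

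There is no real obstacle here beyond bookkeeping: one has to check carefully that each of the normalizations chosen in subsection \ref{ss:geomsetup} does exactly the job of killing the terms it was chosen to kill, and that the signs in (\ref{eqn:gradEfinal}) line up with the statement of the lemma. The content of the lemma is then the combination of two observations: $r_i$ vanishes identically on $\partial\D$ along $f_0$, and the Szeg\H{o} projector is the identity on the tangent model $\mathcal{B}=(\HH^{k,\alpha}_0)^{\oplus(n-1)}$.
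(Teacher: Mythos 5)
Your argument follows essentially the same route as the paper's own proof: using the normalizations of $h$ at $z_0$, observe that $r_i(f_0)\equiv 0$, $\rho(f_0)\equiv 1$, $\theta_0(f_0)=0$ kill all Leibniz terms but the linearization of $r_i$ itself, compute that linearization from the mixed second derivative $h_{i\bar j}(0)$, and note that the Szeg\H{o} projector $S_0$ fixes the resulting holomorphic datum $\delta f_i$. The sign ambiguity you flag is in fact a typo in subsection~\ref{ss:geomsetup}: positivity of $c_1(L,h)$ forces $h_{j\bar k}(0)$ to be negative definite (a complex-linear change of $z$ cannot flip the signature of a Hermitian form), so the normalization there should read $h_{j\bar k}(0)=-\delta_{jk}$, which makes the paper's normalization (4) $r_{k,\bar\ell}(f_0)\equiv -\delta_{k\ell}$ and the final factor of $+2$ entirely consistent without needing a separate ``reconciliation of conventions.''
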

\vs
Given the lemma, the implicit function theorem implies the desired corollary:
\vs
\begin{corollary}
	\label{cor:H}
There is a unique smooth function $H = H(z', \lambda)$ from a neighborhood of $0 \in \c^{n-1} \times \c$ into $\m_{loc}$ verifying the following:
\begin{enumerate}
	\item $H(0,0) = f_0.$
	\vs
	\item $e(H(z', \lambda)) = (z', \lambda) \in \c^{n-1} \times \c.$
	\vs 
	\item $\nabla E(H(z', \lambda)) \equiv 0 \in \mathcal{B}.$
\end{enumerate}
Furthermore, $H(z', \lambda)$ is an isolated critical point in $\m_{loc,z',\lambda}$.
\end{corollary}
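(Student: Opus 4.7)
The plan is to apply the implicit function theorem on Banach manifolds to the fiberwise gradient equation $\nabla E(f) = 0$, viewing $\m_{loc}$ as a smooth Banach fiber bundle over $\c^{n-1} \times \c$ via the evaluation map $e$, with fiber $\m_{loc,z',\lambda}$ locally modeled on $\mathcal{B}$. The parametrization (\ref{eqn:param}), in which the auxiliary function $w(\zeta, \delta f, z', \lambda)$ depends smoothly on all of its arguments by the contraction-mapping construction just completed, furnishes after letting $(z',\lambda)$ vary a smooth local trivialization $\Phi \colon \mathcal{B}_{\epsilon} \times U \to \m_{loc}$ defined on a neighborhood $\mathcal{B}_{\epsilon}$ of $0 \in \mathcal{B}$ and a neighborhood $U$ of $(0,0) \in \c^{n-1} \times \c$, with $\Phi(0,0,0) = f_0$ and $e \circ \Phi(\delta f, z', \lambda) = (z', \lambda)$.

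Set $\tilde E(\delta f, z', \lambda) := \nabla E(\Phi(\delta f, z', \lambda))$, which is a smooth map from $\mathcal{B}_{\epsilon} \times U$ to $\mathcal{B}$. First observe that $\tilde E(0,0,0) = 0$: at $f_0(\zeta) = (0,\zeta,0)$ one has $r_i(f_0(\zeta)) \equiv 0$ for $i = 1, \ldots, n-1$ by (\ref{eqn:qf0}), so each Szeg\H{o} projection in (\ref{eqn:gradEfinal}) vanishes. By Lemma \ref{lem:svf0}, the partial derivative $D_{\delta f} \tilde E(0,0,0)$ equals $2I$, which is a topological isomorphism of $\mathcal{B}$. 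The Banach-space implicit function theorem then produces, on a possibly smaller neighborhood $U' \subseteq U$ of $(0,0)$, a unique smooth function $H_0 \colon U' \to \mathcal{B}_{\epsilon}$ with $H_0(0,0) = 0$ and $\tilde E(H_0(z',\lambda), z',\lambda) \equiv 0$. Setting $H(z',\lambda) := \Phi(H_0(z',\lambda), z',\lambda)$ immediately yields properties (1)--(3) of the corollary, since $H(0,0) = \Phi(0,0,0) = f_0$, $e(H(z',\lambda)) = (z',\lambda)$ by construction of $\Phi$, and $\nabla E(H(z',\lambda)) = \tilde E(H_0(z',\lambda), z',\lambda) = 0$.

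For the isolated-critical-point assertion, invertibility of $D_{\delta f}\tilde E$ is an open condition in the operator norm on $\text{Lin}(\mathcal{B}, \mathcal{B})$, so after shrinking $U'$ we may assume $D_{\delta f}\tilde E(H_0(z',\lambda), z',\lambda)$ remains an isomorphism for every $(z',\lambda) \in U'$. Consequently, for each fixed $(z',\lambda) \in U'$, the zero $H_0(z',\lambda)$ of $\delta f \mapsto \tilde E(\delta f, z',\lambda)$ is locally unique in $\mathcal{B}_{\epsilon}$; translated through the trivialization $\Phi$, this means $H(z',\lambda)$ is an isolated critical point of $E$ on $\m_{loc,z',\lambda}$. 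The only genuine technical ingredient in this plan is the joint smoothness of $\Phi$ in $(\delta f, z', \lambda)$, which is precisely what the preceding contraction-mapping construction of $\m_{loc}$ as a Banach manifold has already delivered; once that is in hand, the corollary is a direct application of the implicit function theorem, and I do not anticipate any further obstacle.
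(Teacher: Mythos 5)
Your proposal is correct and follows essentially the same route as the paper: the paper simply states that Lemma~\ref{lem:svf0}, together with the implicit function theorem, yields the corollary, and your proof is a careful unwinding of exactly that argument, using the local trivialization from~(\ref{eqn:param}), the identification $D_{\delta f}\tilde E(0,0,0) = 2I$, and openness of invertibility to obtain the isolated-critical-point statement.
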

\begin{proof}(of Lemma \ref{lem:svf0})
We calculate directly, component by component from (\ref{eqn:gradEfinal}), so the $\ell$-th component of $\nabla E(f(a))$ is 
\[
[\nabla E(f(a))]_{\ell} = -2 \sec (\theta_0(f(a)) S_0(\frac{r_{\bar{\ell}}}{\rho(f(a))}), \, \ell = 1, \ldots, n-1. 
\]
Note first that we have 
\[
	\frac{d\;\;}{da} \sec \theta_0(f(a)) |_{a =0} = 0,
\] 
since $\theta_0(f_0) = 0$. Therefore
\[
\frac{d\;\;}{da} [\nabla E(f(a)]_{\ell}\,(f_0) = -2 S_0(\frac{d\;\;}{da} (\frac{r_{\bar{\ell}}}{\rho})(f_0)).
\]
But then
\begin{equation}
	\label{eqn:Hess}
\begin{array}{rcl}
\frac{d\;\;}{da} (\frac{r_{\bar{\ell}}}{\rho})(f_0) & = &  \sum_k \, \frac{r_{k,\bar{\ell}}(f_0)}{\rho(f_0)} \cdot \delta f_k(f_0) + \frac{r_{\bar{k},\bar{\ell}}(f_0)}{\rho(f_0)} \cdot \overline{\delta f_k}(f_0) \\
							&										& \\
							& & - \sum_k \, \frac{r_{\bar{\ell}}(f_0) \rho_k(f_0)}{\rho^2(f_0)} \cdot \delta f_k(f_0) + \frac{r_{\bar{\ell}}(f_0) \rho_{\bar{k}}(f_0)}{\rho^2(f_0)} \cdot \overline{\delta f_k}(f_0).
\end{array}
\end{equation}
We now use the following normalizations. (1) is from (\ref{eqn:qf0}) and (\ref{eqn:RHrt})  above. (2) - (4) are translations of those at the end of section \ref{ss:geomsetup} to the present $r$: 
\begin{enumerate}
\item $\rho(f_0) \equiv 1.$
\vs
\item $r_{\ell}(f_0) \equiv 0, \for \ell = 1, \ldots, n-1.$
\vs 
\item $r_{k,\ell}(f_0) \equiv 0, \for k, \, \ell = 1,\ldots, n-1.$
\vs
\item $r_{k,\bar{\ell}}(f_0) \equiv - \delta_{k,\bar{\ell}}, \for k, \, \ell = 1,\ldots, n-1.$
\end{enumerate}
\vs
Using these, (\ref{eqn:Hess}) reduces to
\[
\frac{d\;\;}{da} (\frac{r_{\bar{\ell}}}{\rho})(f_0) = - \delta f_{\ell}(f_0),
\]
and therefore
\[
D\nabla E(f_0) \cdot \left(\begin{array}{c} \delta f_1(f_0) \\ \\ \vdots \\ \\ \delta f_{n-1}(f_0) \end{array}\right) = 2 \, \left(\begin{array}{c} \delta f_1(f_0) \\ \\ \vdots \\ \\ \delta f_{n-1}(f_0) \end{array}\right),
\]
which proves the lemma.
\end{proof}
\vs\vs

%
%
%
%

\subsection{Assembling the solution} 
	\label{ss:ass}
It remains to piece together the extremal disks constructed locally above into a global solution of the HCMA equation. By compactness of $D$, there exists an $\epsilon > 0$ small enough so that corollary \ref{cor:H} applies for all $z' \in D$ and all $\lambda$ such that $|\lambda| < \epsilon$. Given a $z_0 \in D$, we can set up our special coordinates near $z_0$ so that, e.g., $z'(z_0) = 0$, and by corollary \ref{cor:H}, if we set $f = f(\zeta; z', \lambda) = H(z', \lambda)(\zeta)$, with $\lambda$ fixed, gives us a smooth map 
\[
\Psi_{\lambda, 0}: U(z_0) \times \D \to \u_{\lambda}, \; (z', \zeta) \to f(\zeta; z', \lambda)
\]
which extends to be $\mathcal{C}^{k,\alpha}$ on $U(z_0) \times \bar{\D}$. Here $U(z_0)$ is a convenient open neighborhood (in $D$) of $z_0 \in D$. This map is a diffeomorphism onto a neighborhood of $f(\bar{\D}; 0, \lambda)$ for $U(z_0)$ and $\lambda$ small enough, since this is true for $\lambda = 0$. Explicitly, in our special coordinates, for $\lambda = 0$, $\u_0 = \{(z', t, \lambda = 0) \, | \; h(z')|t|^2 < 1\}$, and the extremal disk is given by
\begin{equation}
	\label{eqn:ext0}
f(\zeta; z', \lambda = 0) = (z', \frac{\zeta}{h^{\frac12}(z')}, \lambda = 0).
\end{equation}
To globalize things, let us rescale $\Psi_{0,0}$ and define it from $\u_0$ to itself by 
\[
\Psi_0(z', t, \lambda = 0) = f(h^{\frac12}(z') t; z', \lambda = 0) = (z', t, \lambda = 0),
\]
by (\ref{eqn:ext0}). That is, $\Psi_0 = Id: \u_0 \to \u_0$. Similarly, define $\Psi_{\lambda}: \u_0 \to \u_{\lambda}$ by the local formula
\[
\Psi_{\lambda}(z', t, \lambda) = f(h^{\frac12}(z') t; z', \lambda).
\]
We leave to the reader to check that corollary \ref{cor:H} shows that these local formulas agree on overlaps and give a globally defined map $\Psi_{\lambda}$. The maps $\Psi_{\lambda}$ depend smoothly on $\lambda$ and are smooth in the interiors of $\u_0, \u_{\lambda}.$ They extend to be of class $\mathcal{C}^{k,\alpha}$ from $\bar{\u}_0 \to \bar{\u}_{\lambda}.$ (A simple argument shows they are in fact smooth up to the boundary.) Since $\Psi_0$ is a diffeomorphism, so is $\Psi_{\lambda}$, for $|\lambda|$ small enough. This diffeomorphism is holomorphic along the fibers of $\u_0 \subset  L$, and carries this foliation by fibers of $L$ to the foliation $\f_{\lambda}$ of $\u_{\lambda}$ by the extremal disks. Define $\Phi_{\lambda} = \Psi^{-1}_{\lambda}: \u_{\lambda} \to \u_0$, and define $u_{\lambda} = \Phi_{\lambda}^*u_0$. We claim that $\v_{\lambda}, u_{\lambda}$ can be taken to be the $\v, u$, respectively, in Theorem \ref{thm:converse}, for $\lambda$ small enough. Since $u_{\lambda}$ is harmonic along the leaves of $\f_{\lambda}$, it suffices to show that $u_{\lambda}$ is plurisubharmonic of rank $n-1$. 
\vs
\begin{lemma}
	\label{lem:upsh}
		For $u_{\lambda} = \Phi^*_{\lambda} u_0$ as above, we have, for $\lambda$ sufficiently small:
		\vs
			\begin{enumerate}
				\item the level sets $\u_{\lambda,c} := \{u_{\lambda} = c \in [0,+\infty)\}$ are smooth and strictly pseudoconvex in $\u_{\lambda}$.
				\vs
				\item for $f:\D \to \u_{\lambda}$. the (1,0) form $\zeta f^*\partial u_{\lambda}$ extends as a holomorphic, non-vanishing section of $f^*T(\u_{\lambda})$ on $\D$.
			\end{enumerate}
\end{lemma}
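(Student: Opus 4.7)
\emph{Proof plan.} The plan is to dispose of (2) first, essentially by unwinding the construction, and then to reduce (1) to a perturbation argument around the model case $\lambda = 0$.

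\emph{Part (2).} By construction $\Psi_\lambda := \Phi_\lambda^{-1}:\u_0 \to \u_\lambda$ carries the model foliation of $\u_0$ by fibers of $L$ to the foliation $\f_\lambda$, and restricts to a biholomorphism on each fiber. Since $u_\lambda = \Phi_\lambda^* u_0$ and, in the local frame of section \ref{ss:geomsetup}, $u_0 = -\log h - \log |t|^2$ on a model fiber, for any holomorphic parametrization $f:(\D,0)\to (\u_\lambda, z_0)$ of a leaf of $\f_\lambda$ we obtain, after a rotation in $\zeta$, the identity $f^* u_\lambda(\zeta) = \log |\zeta|^{-2}$ on $\D - \{0\}$, which is (\ref{eqn:logatinf}). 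Differentiating gives $f^* \d u_\lambda = -d\zeta/\zeta$, so $\zeta \cdot f^*\d u_\lambda = -d\zeta$ is manifestly a holomorphic, nonvanishing $(1,0)$-form on all of $\D$.

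\emph{Part (1).} Smoothness of the level sets $\u_{\lambda,c}$ is immediate from part (2): along each leaf, $u_\lambda = \log|\zeta|^{-2}$ has nonvanishing differential for $\zeta\neq 0$, so $du_\lambda$ is nowhere zero on $\u_\lambda - D$. For strict pseudoconvexity we first treat $\lambda = 0$. In the normalized frame from the end of section \ref{ss:geomsetup}, $\ddb u_0 = \ddb(-\log h) = \pi^* c_1(L,h)$ is $\geq 0$ overall and strictly positive on horizontal $(1,0)$-directions by the positivity of $(L,h)$. Because $\d u_0$ has nonzero $dt$-component $-1/t$, every $(1,0)$-vector $v = v^z\d_z + v^t\d_t$ lying in the complex tangent space of $\{u_0=c\}$ satisfies $v^t = -t\, v^z \d_z \log h$, and the Levi form reduces to $\ddb(-\log h)(v^z, \bar v^z) > 0$ whenever $v^z \neq 0$. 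Hence the model level sets are strictly pseudoconvex.

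To upgrade to general small $\lambda$, transport everything back to the fixed manifold $\u_0$ via $\Psi_\lambda$: the pulled-back function $\Psi_\lambda^* u_\lambda = u_0$ does not depend on $\lambda$, while the pulled-back complex structure $J_\lambda := \Psi_\lambda^* J$ converges to $J_0$ in $\mathcal{C}^{k-1,\alpha}$ as $\lambda \to 0$, by the $\mathcal{C}^{k,\alpha}$-smooth dependence of $\Psi_\lambda$ on $\lambda$ established in Corollary \ref{cor:H}. Strict pseudoconvexity of a fixed smooth level set of a fixed defining function is a $\mathcal{C}^2$-open condition on the ambient complex structure, and it holds at $\lambda = 0$; therefore it persists for all sufficiently small $|\lambda|$. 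The only technical point requiring care is precisely this $\mathcal{C}^{k,\alpha}$-continuity of $\Psi_\lambda$ in $\lambda$, patched across the coordinate charts used to glue $\Psi_\lambda$ together in section \ref{ss:ass}; it is exactly what Corollary \ref{cor:H} and that gluing provide.
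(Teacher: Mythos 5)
Your Part (2) has a genuine gap coming from a misreading of the statement. The lemma asserts that $\zeta\, f^*\partial u_{\lambda}$ extends holomorphically as a \emph{section of} $f^*T^*(\u_{\lambda})$, i.e.\ a rank-$n$ vector over $\D$: in local holomorphic coordinates $(z_1,\dots,z_{n-1},t)$ on $\u_{\lambda}$, all $n$ component functions $\zeta\, u_{\lambda,\ell}(f(\zeta))$ and $\zeta\, u_{\lambda,t}(f(\zeta))$ must extend holomorphically and the resulting vector must be nowhere zero. Your computation, $f^*\partial u_{\lambda} = -d\zeta/\zeta$ as a $1$-form on $\D$, only records the single scalar $\partial u_{\lambda}(f'(\zeta))$, i.e.\ the composition with $f_*$; it says nothing about the components transverse to the leaf. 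Those transverse components are precisely what must be controlled, and controlling them is where the variational structure enters: the paper constructs $\beta = g\, dt + \zeta \sum g_{\ell}\, dz_{\ell}$ using the holomorphic extensions $g, g_{\ell}$ furnished by (\ref{eqn:RHrt}) and Corollary \ref{cor:rihol} (the criticality of the disks), and Lemma \ref{lem:tangency} then identifies $\beta = -\zeta\,\partial u_{\lambda}$ along $f(\bar\D)$. Without the extremality, the transverse components of $\zeta\,\partial u_{\lambda}$ have no reason to be holomorphic. This full, vector-valued statement is also indispensable downstream: it is used to conclude that $\overline{f'(\zeta)}$ lies in the kernel of $(u_{\lambda,i\bar\jmath})$ and hence that $i\partial\bar\partial u_{\lambda}$ has rank $<n$.

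Your Part (1) is a legitimately different route from the paper's (pull back to the fixed pair $(\u_0, u_0)$ and let the complex structure $J_{\lambda} = \Psi_{\lambda}^*J$ vary, then invoke openness of strict pseudoconvexity), but as stated it glosses over a uniformity issue that the paper confronts head-on. There are infinitely many level sets $c\in[0,+\infty)$, and as $c\to\infty$ they accumulate on $D$; moreover $\partial u_0 \sim dt/t$ blows up near $D$, so the effect on $\partial^{J_\lambda}\bar\partial^{J_\lambda} u_0$ of a small change in $J$ is not obviously uniformly small there. The paper's Taylor expansion $\Phi_{\lambda,n} = tA_{\lambda} + \bar t B_{\lambda}$ with the uniform $O(\lambda)$, $tO(\lambda)$ derivative estimates (\ref{eqn:HessP}) is exactly what makes the perturbation controllable all the way down to $D$, yielding $-C_1/|t|^2 \le \det\mathcal{L}(u_{\lambda}) \le -C_2/|t|^2 < 0$. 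Your qualitative openness argument would need to be supplemented by estimates of this kind to produce a $\lambda_0$ independent of $c$; without them the claim ``persists for all sufficiently small $|\lambda|$'' is not justified.
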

\vs
\begin{proof} 
	First note that both statements are obviously true for $\lambda = 0$. Since $u_{\lambda}$ restricted to a leaf of $\f_{\lambda}$ is $\log \frac1{|\zeta|^2}$ in the uniformizing parameter, $d\, u_{\lambda} \neq 0$ on $\u_{\lambda} - D$ and the level sets are smooth. The strict pseudoconvexity is equivalent to showing that the Levi matrix
\begin{equation}
	\label{eqn:Levi}
	\mathcal{L} = \mathcal{L}(u) = \left(\begin{array}{ccccc} 0 & u_{\bar{1}} & \cdots & u_{\overline{n-1}} & u_{\bar{t}} \\ u_1 & u_{1,\bar{1}} & \cdots & u_{1,\overline{n-1}} & u_{1,\bar{t}} \\ \vdots & & u_{k, \bar{\ell}} &  & \vdots \\ u_t & u_{t,\overline{1}} & \cdots & u_{t,\overline{n-1}} & u_{t,\bar{t}} \end{array} \right), k, \ell = 1, \ldots, n-1,
\end{equation}
for $u = u_{\lambda}$ be non-singular on $\u_{\lambda} - D$. This is because the level sets of $u_0$ on $\u_0 - D$ are strictly pseudoconvex, and all level sets of $u_{\lambda}$ would have non-degenerate Levi forms, which could not change signature with $\lambda$.
\vs
To analyze $\mathcal{L}(u_{\lambda})$, we write it locally in the special coordinates we have been using. Notice that the coordinates will be $(z',t)$ both on $\u_0$ and $\u_{\lambda}$ with the caution that $\u_{\lambda} \subset \c^n \times \c$ with $\u_{\lambda}$ defined by $z_n = \lambda \cdot t.$ Setting $\Phi_{\lambda, n} = \Phi_{\lambda}^* t,$ we have that
\begin{equation}
	\label{eqn:uform}
		u_{\lambda}(z',t) = \log \frac1{h(z') |\Phi_{\lambda, n}(z',t)|^2} = \log \frac1{h(z')} + \log \frac1{|\Phi_{\lambda, n}(z',t)|^2}.
\end{equation}
Note that $\log \frac1{h(z')}$ is strictly plurisubharmonic in $z'$, independently of $\lambda$. To analyze $- \log |\Phi_{\lambda,n}|^2$, we use Taylor's theorem to write on $\u_{\lambda}$
\[
\Phi_{\lambda,n}(z',t) = t A_{\lambda}(z',t) + \bar{t} B_{\lambda}(z', t),
\]
where 
\vskip 1mm
\begin{equation}
	\label{eqn:AB}
		\begin{array}{rl}
		(1) & A_{\lambda}, B_{\lambda} \; \text{are smooth in the parameters} \; z', t, \lambda \on \bar{\u}\\
		 & \\
		(2) & A_0 \equiv 1, \and B_0 \equiv 0
		\end{array}
\end{equation}
\vs
\noindent Property (2) in (\ref{eqn:AB}) follows from the fact that $\Phi_0$ is the identity mapping. It follows from (\ref{eqn:AB}) that all derivatives of positive order of $A_{\lambda}, B_{\lambda}$ are uniformly $O(\lambda)$ on $\u_{\lambda}$, from which the following estimates follow, uniform on $\u$:
\begin{equation}
	\label{eqn:HessP}
		\begin{array}{rl}
		(1) & \Phi_{\lambda,n} = t(1 + O(\lambda)). \\
		& \\
		(2) & (\Phi_{\lambda,n})_k = (\Phi_{\lambda,n})_{\bar{k}} = t O(\lambda), \, k = 1,\ldots, n-1.\\
		&  \\
		(3) & (\Phi_{\lambda,n})_{k,\bar{\ell}} =  t O(\lambda), \, k, \ell = 1,\ldots, n-1. \\
		& \\
		(4) & (\Phi_{\lambda,n})_t = 1 + O(\lambda), (\Phi_{\lambda,n})_{\bar{t}} = t O(\lambda). \\
		& \\
		(5) & (\Phi_{\lambda,n})_{k,\bar{t}} = (\Phi_{\lambda,n})_{t,\bar{k}} = (\Phi_{\lambda,n})_{t,\bar{t}} = O(\lambda), \, k = 1,\ldots, n-1.
		\end{array}
\end{equation}
\noindent Consider the matrix $M$ defined as
\vskip 1mm
\[
M = \left(\begin{array}{ccc}  0 & 0 & 1 \\ 0 & \delta_{k,\ell} & 0 \\ 1 & 0 & t \end{array} \right), \; k, \ell = 1, \ldots, n.
\]
\noindent Using (\ref{eqn:HessP}) we see that
\[
M \mathcal{L}(u_{\lambda}) \bar{M} = \left(\begin{array}{ccccc} 0 & & -(\log h(z'))_{\bar{\ell}} & & 1 \\ & & &  & \\ -(\log h(z'))_{k} & & -(\log h(z'))_{k,\bar{\ell}} & & 0\\ & & & & \\ 1 & & 0 & & 0 \end{array} \right) + O(\lambda).
\]
Hence, there exist constants $C_1 > C_2 > 0$ so that 
\[
- \frac{C_1}{|t|^2} \leq \det \, \mathcal{L} \leq - \frac{C_2}{|t|^2} < 0,
\]
on $\u_{\lambda}$, for $\lambda$ small enough, proving point (1.) of lemma \ref{lem:upsh}.
\vs
To prove point (2.) of the lemma, we use an argument of \cite{lempert81}, p. 462. Let us note first that we could use $u_{\lambda}$ as a defining function for $\partial \u_{\lambda}$ in place of the $r$ used above. Hence, along $\partial f(\D)$ we have $\partial u_{\lambda} = - p(\zeta) \, \partial \, r$, for $p(\zeta), \zeta \in \partial \D$, is real and strictly positive. The following lemma collects facts we have already proved and need now.
\begin{lemma}
	\label{lem:extensions}
		\[
		\begin{array}{rl} (1) & \frac{\zeta}{p \rho} u_{\lambda, t} \, \in \HH^{k,\alpha}.\\
					      &		\\
					(2) & \text{the extension} \, - g \, \text{of} \; \frac{\zeta}{p \rho} u_{\lambda, t} \; \text{to} \, \D \, \text{is non-vanishing.} \\
					     &  \\
					(3) & \frac{u_{\lambda, \ell}}{p \rho} \, \text{extends to} \; g_{\ell}, \; \text{holomorphic on} \, \D \and \in \mathcal{C}^{k,\alpha}(\bar{\D}).     		
		\end{array}
		\]
\end{lemma}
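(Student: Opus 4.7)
The plan is to derive all three conclusions as a short computation, combining three ingredients that are already in hand: (a) the proportionality of $u_\lambda$ and $1-r$ as defining functions of $\partial \u_\lambda$, which is exactly the relation $\partial u_\lambda = -p \,\partial r$ on $\partial f(\D)$ recorded just before the lemma; (b) the Riemann--Hilbert factorization (\ref{eqn:RHrt}), $\zeta\, r_t(f) = \rho(\zeta)\, g(\zeta)$, with $g$ holomorphic and non-vanishing on $\bar\D$ and $|g(0)|=1$; and (c) the criticality of the extremal disk $f = H(z',\lambda)$ produced by Corollary \ref{cor:H}, together with Corollary \ref{cor:rihol}, which asserts that each $\frac{r_\ell(f)}{\rho}$, $\ell=1,\ldots,n-1$, extends holomorphically to $\bar\D$.

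First I would set up the comparison of defining functions carefully. Because $u_\lambda = \Phi_\lambda^* u_0$ and $u_0 = -\log r$ is smooth on a neighborhood of $\{r=1\}$, while $\Phi_\lambda$ is of class $\mathcal{C}^{k,\alpha}$ up to the boundary, the function $u_\lambda$ is a $\mathcal{C}^{k,\alpha}$ defining function for $\partial\u_\lambda$, positive in the interior and vanishing simply on $\partial \u_\lambda$. Hence $u_\lambda = P\cdot(1-r)$ for a $\mathcal{C}^{k-1,\alpha}$ positive function $P$ near $\partial \u_\lambda$, so that $\partial u_\lambda|_{\partial \u_\lambda} = -P|_{\partial \u_\lambda}\,\partial r|_{\partial \u_\lambda}$. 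Pulling back by $f$ and writing $p(\zeta) := P(f(\zeta))$ gives, on $\partial\D$,
\[
u_{\lambda,t}(f(\zeta)) = -p(\zeta)\, r_t(f(\zeta)), \qquad u_{\lambda,\ell}(f(\zeta)) = -p(\zeta)\, r_\ell(f(\zeta)).
\]

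For (1) and (2), multiply the first identity by $\zeta$ and divide by $p\rho$; by (\ref{eqn:RHrt}),
\[
\frac{\zeta\, u_{\lambda,t}(f)}{p\,\rho} \;=\; -\,\frac{\zeta\, r_t(f)}{\rho} \;=\; -\,g(\zeta) \quad \text{on } \partial\D.
\]
Since $-g$ is holomorphic on $\bar\D$, this boundary function extends to an element of $\HH^{k,\alpha}$, and the extension is non-vanishing because $g$ is, which is exactly what (1) and (2) assert.

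For (3), I would invoke Corollary \ref{cor:rihol}: because $f=H(z',\lambda)$ is a critical point of $E$ on $\m_{loc,z',\lambda}$, each $\frac{r_\ell(f)}{\rho}$ extends holomorphically to $\bar\D$. Dividing the second identity above by $p\rho$,
\[
\frac{u_{\lambda,\ell}(f)}{p\,\rho} \;=\; -\,\frac{r_\ell(f)}{\rho},
\]
so the left-hand side inherits the holomorphic extension, which we take to be $g_\ell$. No part of the argument poses a serious obstacle; the heart of the lemma is already encoded in the criticality condition from Corollary \ref{cor:rihol}, and the only point needing care is the regularity of $p$ and of the pulled-back derivatives of $u_\lambda$, which is automatic from the $\mathcal{C}^{k,\alpha}$-regularity of $\Phi_\lambda$ up to the boundary together with the smoothness of $u_0 = -\log r$ near $\{r=1\}$.
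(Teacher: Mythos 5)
Your proof is correct and takes essentially the same route as the paper's (very terse) argument: both reduce (1)–(2) to the Riemann--Hilbert factorization (\ref{eqn:RHrt}) and (3) to Corollary \ref{cor:rihol}, using the proportionality $\partial u_\lambda = -p\,\partial r$ on $\partial f(\D)$ to trade $u_{\lambda,t}, u_{\lambda,\ell}$ for $r_t, r_\ell$. You have simply spelled out the change-of-defining-function bookkeeping and the division by $p\rho$ that the paper leaves implicit.
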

\begin{proof}(of lemma \ref{lem:extensions})
Points (1) and (2) are contained in (\ref{eqn:RHrt}). Part (3) is the content of corollary \ref{cor:rihol}.
\end{proof}
Now consider the one-form 
\begin{equation}
	\label{eqn:beta}
		\beta = g \, dt + \, \zeta \, \sum g_{\ell} \, dz_{\ell},
\end{equation}
defined, holomorphic and nowhere vanishing along $f(\D)$. Along $f(\partial \D)$, 
\[
\beta = - \frac{\zeta}{p \rho} \, \partial u_{\lambda}.
\]
Since $\beta$ is holomorphic, we get $f^*\beta(\dd[\;,\zeta])$ is holomorphic, but since $f^*u_{\lambda} = \log \frac1{|\zeta|^2}$, we get $f^*(\frac{\zeta}{p \rho} \, \partial \, u_{\lambda})(\dd[\;, \zeta]) = - \frac1{p \rho}$, which is real. Hence $f^*\beta(\dd[\;,\zeta])$ is constant on $\bar{\D}$ and $\frac1{p \rho}$ is constant on $\partial \D$. Scaling $\beta$, we can assume $p \rho \equiv 1$.
\vs
\begin{lemma}
	\label{lem:tangency}
		$\beta = - \zeta \, \partial u_{\lambda}$ along $f(\bar{\D})$.
\end{lemma}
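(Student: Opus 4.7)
I would set $\alpha := \beta + \zeta\,\partial u_\lambda$ along $f(\bar\D)$ and aim to show $\alpha \equiv 0$, which is the stated identity. Using $u_\lambda\circ f = \log(1/|\zeta|^2)$ and the fact that $\partial u_\lambda$ has at worst a simple pole along $D$ in the leaf-tangent direction, the factor $\zeta$ cancels this pole, and $\alpha$ extends as a continuous section of the holomorphic pullback bundle $f^\ast T^{\ast 1,0}\mathcal{U}_\lambda$ over $\bar\D$. The normalization $p\rho\equiv 1$ fixed just before the lemma turns the boundary identity $\beta = -(\zeta/(p\rho))\,\partial u_\lambda$ on $f(\partial\D)$ into $\alpha|_{\partial\D} = 0$; moreover, evaluating $\alpha$ on the leaf tangent $f_\ast\partial_\zeta$ gives $f^\ast\beta(\partial_\zeta) + \zeta\,\partial_\zeta(u_\lambda\circ f) = 1 + (-1) = 0$, so $\alpha$ also annihilates the leaf direction throughout $\bar\D$.

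The plan is then to verify that $\alpha$ is holomorphic as a section of $f^\ast T^{\ast 1,0}\mathcal{U}_\lambda$, after which the maximum modulus principle applied componentwise in a local holomorphic trivialization, together with the boundary vanishing, forces $\alpha\equiv 0$. Holomorphicity of $\beta$ is built in: the coefficients $g, g_\ell$ of $\beta = g\,dt + \zeta\sum g_\ell\,dz_\ell$ are holomorphic in $\zeta$ by Lemma \ref{lem:extensions} (the Riemann--Hilbert step together with the critical-point condition $\nabla E(f)=0$). Holomorphicity of the section $(\zeta\,\partial u_\lambda)|_{f(\bar\D)}$ reduces, by the chain rule
\[
\bar\partial_{\bar\zeta}\bigl[u_{\lambda,i}(f(\zeta))\bigr] \;=\; \sum_j u_{\lambda,i\bar\jmath}(f(\zeta))\,\overline{\partial_\zeta f^j(\zeta)},
\]
to the statement that $a := f_\ast\partial_\zeta$ lies in the kernel of the Levi form $(u_{\lambda,i\bar\jmath})$ at every point of $f(\D)$.

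The main obstacle, therefore, is establishing this Levi-kernel property, which is the Monge--Amp\`ere content of Lemma \ref{lem:upsh} and is to be interlocked with its part (1). The plan is to combine (i) the identity $f^\ast(\partial\bar\partial u_\lambda) = \partial\bar\partial(u_\lambda\circ f) = \partial\bar\partial\log(1/|\zeta|^2) = 0$, which gives $\sum u_{\lambda,i\bar\jmath}(f)\,a^i\bar a^j = 0$, with (ii) the plurisubharmonicity of $u_\lambda$. The latter is immediate at $\lambda = 0$ from $\partial\bar\partial u_0 = \pi^\ast c_1(L,h)\ge 0$ of rank $n-1$ with kernel $\partial_t$, and persists for small $\lambda$ by the same perturbation/continuity argument that underlies the strict pseudoconvexity of the level sets in Lemma \ref{lem:upsh}(1). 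A positive semidefinite Hermitian form that vanishes on a vector must annihilate it, so (i) and (ii) jointly deliver $a \in \ker(u_{\lambda,i\bar\jmath})$, and the argument then closes by the maximum principle as indicated.
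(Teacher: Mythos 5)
Your plan is to set $\alpha := \beta + \zeta\,\partial u_\lambda$ along $f(\bar\D)$, observe $\alpha|_{\partial\D}=0$, show $\alpha$ is a holomorphic section of $f^*T^{*1,0}\u_\lambda$, and conclude by the maximum principle. The critical step is your claim that $(\zeta\,\partial u_\lambda)|_{f(\bar\D)}$ is holomorphic, which you reduce to showing the leaf tangent $a = f_*\partial_\zeta$ lies in the kernel of the complex Hessian $(u_{\lambda,i\bar\jmath})$. You derive this from (i) $f^*(\partial\bar\partial u_\lambda)=0$, which gives $\sum u_{\lambda,i\bar\jmath} a^i\bar a^j = 0$, and (ii) plurisubharmonicity of $u_\lambda$, via the fact that a positive semidefinite form annihilates any null vector. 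Step (ii) is the gap.

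Plurisubharmonicity of $u_\lambda$ for $\lambda\neq 0$ is not available at this point in the argument; in the paper it is precisely the conclusion drawn \emph{after} both parts of Lemma \ref{lem:upsh} have been established, and Lemma \ref{lem:tangency} is the main step in proving part (2), so invoking it is circular. The non-circular route you propose — that psh of $u_0$ ``persists for small $\lambda$ by the same perturbation/continuity argument that underlies the strict pseudoconvexity of the level sets'' — does not work: $\partial\bar\partial u_0$ is semidefinite of rank $n-1$ with a nontrivial kernel, and such a degenerate semidefinite form does \emph{not} remain semidefinite under a generic perturbation (the zero eigenvalue can become negative). What does persist, and what the proof of Lemma \ref{lem:upsh}(1) actually uses, is nondegeneracy of the bordered Hessian $\mathcal{L}(u_\lambda)$: this controls the level-set Levi form restricted to $\{\partial u_\lambda=0\}$, but says nothing about the cross terms between the leaf direction and that subspace. (Concretely, a Hermitian matrix $\left(\begin{smallmatrix} 0 & c^* \\ c & P\end{smallmatrix}\right)$ with $P>0$ and vanishing $(1,1)$-entry need not have $c=0$, so the leaf tangent need not lie in the kernel.) The paper's proof sidesteps psh entirely: it takes a variation $\delta f$ of extremal disks tangent to a level surface $\u_{\lambda,c}$ at a point, shows $\zeta^{-1}\beta(\delta f)$ is holomorphic on $\D$ with real part harmonic and vanishing on $\partial\D$ (from the boundary condition on the disk family and tangency to $D$ at $\zeta=0$), hence $\equiv 0$; this forces $\beta$ and $\zeta\,\partial u_\lambda$ to have the same kernel pointwise, so they are proportional, and evaluating on $\partial_\zeta$ pins the factor to $-1$. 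You would need something analogous — an argument that uses the extremality/holomorphicity of the disk family rather than an a priori psh hypothesis.
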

\vs
\begin{proof}(of lemma \ref{lem:tangency})
Fix $c \in (0, +\infty)$, and consider the level surface $\u_{\lambda, c} := \{u_{\lambda} = c\}$ in $\u_{\lambda}$. For $z \in \u_{\lambda, c},$ let $v$ be a vector tangent to $\u_{\lambda, c}$ at $z$ which lies in the complex tangent space $\{ v \in T(\u_{\lambda,c}) \, | \, \partial u_{\lambda}(v) = 0\}$ at $z$. Let $\gamma(\sigma)$ be a smooth curve lying in $\u_{\lambda,c}$ with $\gamma(0) = z$ and $\gamma'(0) = v$. Then there are smooth functions $z' = z'(\sigma), \zeta = \zeta(\sigma)$ such that $\gamma(\sigma) = f(\zeta(\sigma), z'(\sigma), \lambda)$, where $f(\zeta, z', \lambda)$ is the extremal function guaranteed by the diffeomorphism $\Psi_{\lambda}$. Thus, $\Phi_{\lambda}(\gamma(\sigma)) = (z'(\sigma), t(\sigma))$ and $\zeta(\sigma) = \frac{t(\sigma)}{h^{\frac12}(z'(\sigma))}.$ Set
\[
\delta f = \dd[\;, \sigma] \, f(\zeta, z'(\sigma), \lambda) \, |_{\sigma = 0}.
\]
This is a holomorphic section of $T(\u_{\lambda})$ along $f(\D)$. Since $f(\zeta, z'(\sigma), \lambda) \in \partial \u_{\lambda}$ for all $\zeta \in \partial \D$, we have that 
\[
\re \, \partial u_{\lambda}(\delta f) \, = \, \re \, \zeta^{-1} \beta(\delta f) = 0.
\]
On the other hand, since $f(0, z'(\sigma), \lambda) \equiv (z'(\sigma), 0) \in D,$ we see that $\delta f(0)$ is tangent to $D$ at $(z'(0), 0)$. Taking account of (\ref{eqn:beta}), we see that the holomorphic function $\beta(\delta f)$ vanishes at $\zeta = 0$. Hence, the function $\zeta^{-1} \beta(\delta f)$ is holomorphic on $\D$. Therefore, $\re \, \zeta^{-1} \, \beta(\delta f)$ is harmonic on $\D$ and vanishes on $\partial \D$ and thus vanishes identically on $\D$. This follows for any $v^{(1,0)} = \delta f (z)$ which is tangent to $\u_{\lambda,c}$. If in addition, $\partial u_{\lambda}(\delta f(z)) = 0$, then $\beta(\delta f(z)) = 0$. Since $\beta$ and $\zeta \partial u_{\lambda}$ are non-zero at $z$, there exists a non-zero complex $a(\zeta)$ for $\zeta \neq 0$ such that 
\[
	\beta = a(\zeta) \zeta \partial u_{\lambda}, \, \text{at} \, f(\zeta, z'(0), \lambda), \zeta \neq 0.
\]
Pulling this identity back to $\D$ and evaluating both sides on $\dd[\;,\zeta]$, we see $a(\zeta) \equiv -1.$
\end{proof}
\vs
The proof of lemma \ref{lem:upsh} part (2) is immediate from lemma \ref{lem:tangency}.
\end{proof}
\vs
We now remark that by lemma \ref{lem:upsh} part (2), the 2-form $i \partial \bar{\partial} u_{\lambda}$ has rank $< n$, but by lemma \ref{lem:upsh} part (1), it is positive definite restricted to the kernel $\{ \partial u_{\lambda} = 0\}$. Hence it is plurisubharmonic of rank exactly $n-1$ on $\u_{\lambda} - D$. which completes the proof of Theorem \ref{thm:converse}.
%
%
%
%
%
%
 %
 %
 %
 %
 %
 %
 \section{Further questions}\label{sec:final}
 We arrange these final remarks parallel to the preceding sections.
 \vs
Concerning the uniqueness theorem of section \ref{sec:entirezoll}, a similar result would be valid on the other CROSSes whenever the analogue of the Kobayashi-Ochiai, Kachi-Koll\'ar results hold true. Are these true for the CROSSes $\c\p^n, \h\p^n$ and $\mathbb{O}\p^2$, {\em i.e.}, for the corresponding projective complexifications? These (minimal, equivariant) projective complexifications are listed in \cite{pw}.
\vs
The obvious question about section \ref{sec:algsuff} is whether similar projective compactificatons can be constructed when we do not have the very simple structure on the corresponding Monge-Amp\`ere foliation. Similarly, it is unclear whether for a general affine manifold $X$ there exists a strongly p.s.h. exhaustion $\tau$ with $u = \log \tau$ verifying the HCMA equation near infinity. It is known that exhaustions $u$ satisfying the HCMA equation near infinity exist, but these usually do not have $e^{\tau}$ strictly p.s.h. It seems technically important to have this condition to provide a background K\"ahler metric against which to construct holomorphic functions on $X$ with growth conditions. 
\vs
Finally, concerning section \ref{sec:algnec}, for the CROSSes other than $\r\p^n$, there is the solution $u$ of the HCMA equation on the corresponding (entire) Grauert tube. A variational problem can be set up along the divisor $D$ at infinity of the projective complexification in \cite{pw}, as in subsection \ref{ss:vars} here, which applies to a finite dimensional family of competitor disks. There is a unique stable disk through each point in the divisor at infinity. Can one perturb the totally real zero section corresponding to the original CROSS and create a Monge-Amp\`ere foliation and solution on the complement of such a perturbation? Is there a geometric use or interpretation for the holomorphic competitor disks other than the stable ones, which will exist in all cases other than $M = \r\p^n$? Is there a complex analytic interpretation of Zoll geometry directly using such disks, in dimension 2 or hopefully even in higher dimensions?
 
\bibliographystyle{plain}
\bibliography{bibtex}
\end{document}